\providecommand{\U}[1]{\protect\rule{.1in}{.1in}}
\newtheorem{theorem}{Theorem} [section]
\newtheorem{claim}[theorem]{Claim}
\newtheorem{conjecture}[theorem]{Conjecture}
\newtheorem{definition}[theorem]{Definition}
\newtheorem{example}[theorem]{Example}
\newtheorem{lemma}[theorem]{Lemma}
\newenvironment{proof}[1][Proof]{\noindent\textbf{#1.} }{\ \rule{0.5em}{0.5em}}
\begin{document}

\title{Hultman Numbers and Generalized Commuting Probability in Finite Groups}
\author{Yonah Cherniavsky, Vadim E. Levit, Robert Shwartz\\Department of Computer Science and Mathematics\\Ariel University, Israel\\\{yonah,levitv,robertsh\}@ariel.ac.il
\and Avraham Goldstein\\Department of Mathematics\\City University of New York, NY, USA\\avraham.goldstein.nyc@gmail.com}
\date{}
\maketitle

\begin{abstract}
Let $G$ be a finite group and $\pi$ be a permutation from $S_{n}$. We
investigate the distribution of the probabilities of the equality
\[
a_{1}a_{2}\cdots a_{n-1}a_{n}=a_{\pi_{1}}a_{\pi_{2}}\cdots a_{\pi_{n-1}}%
a_{\pi_{n}}%
\]
when $\pi$ varies over all the permutations in $S_{n}$. The probability
\[
Pr_{\pi}(G)=Pr(a_{1}a_{2}\cdots a_{n-1}a_{n}=a_{\pi_{1}}a_{\pi_{2}}\cdots
a_{\pi_{n-1}}a_{\pi_{n}})
\]
is identical to $Pr_{1}^{\omega}(G)$, with
\[
\omega=a_{1}a_{2}...a_{n-1}a_{n}a_{\pi_{1}}^{-1}a_{\pi_{2}}^{-1}\cdots
a_{\pi_{n-1}}^{-1}a_{\pi_{n}}^{-1},
\]
as it is defined in \cite{DasNath1} and \cite{NathDash1}. The notion of
commutativity degree, or the probability of a permutation equality $a_{1}%
a_{2}=a_{2}a_{1}$, for which $n=2$ and $\pi=\langle2\;\;1\rangle$, was
introduced and assessed by P. Erd\"{o}s and P. Turan in \cite{ET} in 1968 and
by W. H. Gustafson in \cite{G} in 1973. In \cite{G} Gustafson establishes a
relation between the probability of $a_{1},a_{2}\in G$ commuting and the
number of conjugacy classes in $G$. In this work we define several other
parameters, which depend only on a certain interplay between the conjugacy
classes of $G$, and compute the probabilities of general permutation
equalities in terms of these parameters. It turns out that this probability,
for a permutation $\pi$, depends only on the number $c(Gr(\pi))$ of the
alternating cycles in the cycle graph $Gr(\pi)$ of $\pi$. The cycle graph of a
permutation was introduced by V. Bafna and P. A. Pevzner in \cite{BP}. We
describe the spectrum of the probabilities of permutation equalities in a
finite group as $\pi$ varies over all the elements of $S_{n}$. This spectrum
turns-out to be closely related to the partition of $n!$ into a sum of the
corresponding Hultman numbers.

\textbf{Keywords:} independent set; independence polynomial; palindromic
polynomial; self-reciprocal polynomial; real root; perfect graph;
corona.\medskip

\textbf{MSC 2010 classification:} 05C69, 05C76, 05C31.

\end{abstract}

\section{Introduction}

Study of the probability that two random elements in a finite group $G$
commute is very natural \cite{ET}, \cite{G}, \cite{GerRob}. In 1968 Erd\"{o}s
and Turan proved that%
\[
Pr(a_{1}a_{2}=a_{2}a_{1})>\frac{log(log|G|))}{|G|}.
\]
In early 1970s Dixon observed that the commuting probability is $\leq\frac
{1}{12}$ for every finite non-Abelian simple group (this was submitted as a
problem in Canadian Mathematical Bulletin \textbf{13} (1970), with a solution
appearing in 1973). In 1973 Gustafson proved that the commuting probability is
equal to $\frac{k(G)}{|G|}$, where $k(G)$ is the number of conjugacy classes
in $G$ \cite{G}. Based on this observation, further Gustafson obtained the
upper bound of the commuting probability in any finite non-Abelian group to be
$\frac{5}{8}$ \cite{G}. This upper bound is actually attained in many finite
groups, including $D_{4}$ and $Q_{8}$.

Significant amount of work has been done in assessing the commuting
probability for various special cases of finite groups. For example, the
commuting probability for dihedral groups is studied in \cite{L}, for direct
product of dihedral groups is studied in \cite{CGK}, for wreath products of
two Abelian groups is studied in \cite{ErSu}, for non-solvable groups is
studied in \cite{GerRob}. More information on the development of this subject
and its applications may be found in \cite{Dixon}.

There has been done a lot of various probabilistic studies for finite groups.
Many of these studies can be regarded as various generalizations of the
commuting probability problem. For example, establishing the number of ordered
$k$-tuples of elements of group $G$ which have pairwise commuting elements
\cite{ES}. Another example of generalization is finding the probability that
the commutator of two random group elements of $G$ is equal to a given element
\cite{PS} or that two random elements of $G$ are conjugate \cite{BBW}.

In their recent works \cite{NathDash1} and \cite{DasNath1} Das and Nath study
the probability $Pr_{g}^{\omega}(G)$ of the equality
\[
a_{1}a_{2}...a_{n-1}a_{n}a_{\pi_{1}}^{-1}a_{\pi_{2}}^{-1}\cdots a_{\pi_{n-1}%
}^{-1}a_{\pi_{n}}^{-1}=g
\]
in a finite group $G$. The word
\[
a_{1}a_{2}...a_{n-1}a_{n}a_{\pi_{1}}^{-1}a_{\pi_{2}}^{-1}\cdots a_{\pi_{n-1}%
}^{-1}a_{\pi_{n}}^{-1},
\]
in which $a_{1}a_{2}...a_{n-1}a_{n}$ vary over all the elements of $G$, is
denoted by $\omega$. Thus this is a generalization of the classical study of
the commuting probability, for which case $\omega=a_{1}a_{2}a_{2}^{-1}%
a_{1}^{-1}$ and $g=1$.

In this paper we take a sligthly different approach in generalizing the study
of the commuting probability. Let
\[
\pi=\langle\pi_{1}\;\;\pi_{2}\;\;...\;\;\pi_{n}\rangle
\]
be a permutation from $S_{n}$, written in a shortened way of the two row
notation. We define $Pr_{\pi}(G)$ as the probability of the equality
\[
a_{1}a_{2}\cdots a_{n}=a_{\pi_{1}}a_{\pi_{2}}\cdots a_{\pi_{n-1}}a_{\pi_{n}}%
\]
in $G$. Notice that $Pr_{\langle2\;\;1\rangle}(G)$ is just the commuting
probability of $G$.

Notice also, that the probability%

\[
Pr_{\pi}(G)=Pr(a_{1}a_{2}\cdots a_{n-1}a_{n}=a_{\pi_{1}}a_{\pi_{2}}\cdots
a_{\pi_{n-1}}a_{\pi_{n}})
\]
is identical to $Pr_{1}^{\omega}(G)$, with
\[
\omega=a_{1}a_{2}...a_{n-1}a_{n}a_{\pi_{1}}^{-1}a_{\pi_{2}}^{-1}\cdots
a_{\pi_{n-1}}^{-1}a_{\pi_{n}}^{-1},
\]
as it is defined in \cite{DasNath1} and \cite{NathDash1}.

\begin{itemize}
\item We obtain a new description of $Pr_{\pi}(G)$ in terms of non-negative
integers\newline$c_{i_{1},...,i_{n};j}(G)$, which count the number of ways in
which an element from a conjugacy class $\Omega_{j}$ of $G$ can be broken into
a product of elements from the conjugacy classes $\Omega_{1_{1}}%
,...,\Omega_{i_{n}}$ of $G$.

\item We show that the probability $Pr$, for a fixed group $G$, depends only
on the number of alternating cycles in the cycle graph of the permutation
$\pi$.

\item Finally, we obtain that spectrum of the probabilities of permutation
equalities in a finite group, as $\pi$ varies over all the elements of $S_{n}%
$, is closely related to the partition of $n!$ into a sum of the corresponding
Hultman numbers.
\end{itemize}

\subsection{Definitions and Notations}

Let $n$ be a natural number, $S_{n}$ be the group of all permutations of $n$
elements and
\[
\pi=\langle\pi_{1}\;\;\pi_{2}\;\;...\;\;\pi_{n}\rangle
\]
be a permutation from $S_{n}$, written in a shortened way of the two row
notation. Sometimes we also use the cyclic notation for elements of $S_{n}$,
for which we use parentheses and commas. Thus, for example, $(\theta
_{1},\theta_{2},\theta_{3})$ represents the cycle $\theta_{1}\mapsto\theta
_{2}\mapsto\theta_{3}\mapsto\theta_{1}$. Let $G$ be a finite group.

For any set $S$, we denote the size of $S$ by $\left\vert S\right\vert $. The
number of conjugacy classes of $G$ is denoted by $c(G)$. We denote them by
$\Omega_{1},...,\Omega_{c(G)}$. For $g\in G$ we denote the conjugacy class of
$g$, i.e. the set of all the elements of the form $hgh^{-1}$, where $h\in G$,
by $\Omega(g)$ and the centralizer of $g$, i.e. the set of all elements of $G$
which commute with $g$, by $C_{G}(g)$. Recall, that
\[
\left\vert G\right\vert =\left\vert \Omega(g)\right\vert \cdot\left\vert
C_{G}(g)\right\vert
\]
for all $g$. The set $\{\Omega_{1},...,\Omega_{c(G)}\}$ is denoted by $C(G)$.
We denote by $G^{\prime}$the commutator subgroup of $G$. Namely, $G^{\prime}$
is the minimal normal subgroup of $G$ which contains all the elements of the
form $ghg^{-1}h^{-1}$, where $g,h\in G$. For $g,h\in G$, we denote by $g(h)$
the element $ghg^{-1}$. To indicate that $g,h\in G$ are conjugate we write
$g\sim h$.

By $D_{4}$ we denote the dihedral group with $8$ elements. By $Q_{8}$ we
denote the multiplicative group of unit quaternions, which also has $8$
elements in it.

\begin{definition}
For a sequence $(g_{1},g_{2},\dots,g_{n})$ of $n$ elements of $G$ we denote
by\newline$Stab.Prod_{n}(g_{1},g_{2},\dots,g_{n})$ the set of all the
sequences $(a_{1},a_{2},\dots,a_{n})$ of $n$ elements of $G$ such that
\[
a_{1}^{-1}g_{1}a_{1}\cdot a_{2}^{-1}g_{2}a_{2}\cdots a_{n}^{-1}g_{n}%
a_{n}=g_{1}\cdot g_{2}\cdots g_{n}.
\]

\end{definition}

Notice that $Stab.Prod_{n}(g_{1},g_{2},...,g_{n})$ is a generalization of the
centralizer of an element and $Stab.Prod_{1}(g)$ is just $C_{G}(g)$.

\begin{definition}
The nonnegative integer $c_{i_{1},...,i_{n};j}(G)$ is the number of different
ways of breaking any fixed element $y\in\Omega_{j}(G)$ into a product
$y=x_{1}x_{2}\cdots x_{n}$, so that each $x_{t}$, where $1\leq t\leq n$,
belongs to the class $\Omega_{i_{t}}(G)$.
\end{definition}

Notice, that $c_{i_{1},...,i_{n};j}(G)$ does not depend on the choice of the
element $y\in\Omega_{j}(G)$. Indeed, if we take some other $y^{\prime}%
\in\Omega_{j}(G)$ then exists some $g\in G$ such that $y^{\prime}=gyg^{-1}$
and $y=g^{-1}y^{\prime}g$. Then each product $y=x_{1}x_{2}\cdots x_{n}$
corresponds to the product
\[
y^{\prime}=gyg^{-1}=(gx_{1}g^{-1})(gx_{2}g^{-1})\cdots(gx_{n}g^{-1}),
\]
in which each $x_{t}^{\prime}=(gx_{t}g^{-1})$ belongs to the same class
$\Omega_{i_{t}}(G)$ as $x_{t}$. Vice verse, each product $y^{\prime}%
=x_{1}^{\prime}x_{2}^{\prime}\cdots x_{n}^{\prime}$ corresponds back to the
product
\[
y=g^{-1}y^{\prime}g=(g^{-1}x_{1}^{\prime}g)(g^{-1}x_{2}^{\prime}%
g)\cdots(g^{-1}x_{n}^{\prime}g).
\]
Thus, we see that the number of such different products is the same for $y$
and $y\prime$ and that it depends only on the equivalence class $\Omega
_{j}(G)$. Notice, that $c_{i_{1},...,i_{n};j}(G)$ can be zero and that
$c_{j;j}(G)=1$.

\begin{definition}
We denote by $L_{\pi}(G)$ the number of different solutions of the
equation\newline%
\[
a_{1}a_{2}\cdots a_{n-1}a_{n}=a_{\pi_{1}}a_{\pi_{2}}\cdots a_{\pi_{n-1}}%
a_{\pi_{n}}%
\]
in $G$. We denote by by $Pr_{\pi}(G)$ the probability
\[
Pr(a_{1}a_{2}\cdots a_{n-1}a_{n}=a_{\pi_{1}}a_{\pi_{2}}\cdots a_{\pi_{n-1}%
}a_{\pi_{n}})
\]
that the equation
\[
a_{1}a_{2}\cdots a_{n-1}a_{n}=a_{\pi_{1}}a_{\pi_{2}}\cdots a_{\pi_{n-1}}%
a_{\pi_{n}}%
\]
is satisfied by the random elements $a_{1},a_{2},...,a_{n-1},a_{n}$ of $G$.
\end{definition}

Clearly, $Pr_{\pi}(G)=\frac{L_{\pi}(G)}{|G|^{n}}$.

\begin{definition}
We define $Pr^{n}(G)$ as $Pr_{\langle n\;\;n-1\;\;...\;\;2\;\;1\rangle}(G)$.
\end{definition}

Let $\pi$ be a permutation from $S_{n}$ and let
\[
\omega=a_{1}\cdots a_{n}\cdot a_{\pi_{1}}^{-1}\cdots a_{\pi_{n}}^{-1}.
\]
Then $Pr_{\pi}(G)$ is identical to $Pr_{1}^{\omega}(G)$, as defined in
\cite{DasNath1}, \cite{DasNath2}, \cite{NathDash1}. Similarly, our $Pr^{n}(G)$
is identical to $Pr_{1}^{n}(G)$ in these works. For further information on
calculations, properties and estimates of $Pr_{1}^{\omega}(G)$ and $Pr_{1}%
^{n}(G)$ we refer te reader to \cite{DasNath1}, \cite{DasNath2},
\cite{NathDash1} as well.

\begin{definition}
We denote by $Spec_{n}(G)$ the set of all $Pr_{\pi}(G)$, as $\pi$ runs over
all the permutations from $S_{n}$.
\end{definition}

For the information on the Hultman numbers and the related definitions and
notations we refer to \cite{DL}. Here we briefly review these notions.

\begin{definition}
\label{def.cycle.graph} The cycle graph $Gr(\phi)$ of a permutation $\phi\in
S_{n}$ is the bi-colored directed graph
\end{definition}

with $n+1$ vertices $\phi_{0}=0,\phi_{1},...,\phi_{n}$, whose edge set
consists of:

\begin{itemize}
\item black edges $\phi_{n}\rightarrow\phi_{n-1}\rightarrow...\rightarrow
\phi_{0}\rightarrow\phi_{n}$, and

\item grey edges $0\dashrightarrow1\dashrightarrow2\dashrightarrow
...\dashrightarrow n\dashrightarrow0$.
\end{itemize}

The set of black and grey edges decomposes in a unique way into edge-disjoint
alternating cycles - the cycles in $Gr(\phi)$, which alternate black and grey edges.

\begin{definition}
\label{def.hult.num} The Hultman number $S_{H}(n,k)$ counts the number of
permutations in $S_{n}$ whose cycle graph decomposes into $k$ alternating cycles.
\end{definition}

\begin{definition}
Denote by $\phi^{\cdot}$ the big cycle in $S(1+n)=Sym(\{0,1,2,...,n\})$,
composed of the black arrows of $Gr(\phi)$.
\end{definition}

We will use the cyclic notation $(\phi_{0},\phi_{n},\phi_{n-1},...,\phi
_{2},\phi_{1})$ for the big black cycle $\phi^{\cdot}$. Notice, that there is
a trivial one-to-one correspondence between the permutations of $S_{n}$ and
big cycles in $S(1+n)$. Namely, the entries of a big cycle, starting from the
one after $0$, are interpreted as the entries of the permutation, written in
the shortened way of the two row notation. Thus, for any big cycle in
$S(1+n)$, we can easily obtain the unique permutation in $S_{n}$, for which
this big cycle is its big black cycle.

\begin{definition}
Let $\phi^{\circ}\in S(1+n)$ be $\phi^{\cdot}\cdot(0,1,\dots,n)$.
\end{definition}

Let $H(S_{n})$ be the Hultman decomposition of $S_{n}$ into pairwise disjoint
sets, each containing all the permutations with the same number of alternating
cycles in their cycle graph.

\begin{theorem}
\cite{DL} \label{Theorem 8}The cycle decomposition of $\phi^{\circ}$ contains
the same number of cycles as the number of alternating cycles of $Gr(\phi)$.
\end{theorem}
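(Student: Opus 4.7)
The plan is to interpret the two permutations being multiplied as elementary moves on the vertex set $V=\{0,1,\dots,n\}$ of $Gr(\phi)$ and then exhibit an explicit bijection between alternating cycles of $Gr(\phi)$ and orbits of $\phi^{\circ}$. The cycle $\sigma:=(0,1,\dots,n)$ is exactly the map that sends each vertex $v$ to the head of the unique grey edge leaving $v$; similarly, $\phi^{\cdot}$ sends $v$ to the head of the unique black edge leaving $v$. With the standard composition convention $\phi^{\circ}(v)=\phi^{\cdot}(\sigma(v))$, one application of $\phi^{\circ}$ is a ``grey-then-black'' traversal of $Gr(\phi)$.

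First I would fix an alternating cycle $C$ of length $2m$ and label its vertices as $v_{0},v_{1},\dots,v_{2m-1},v_{2m}=v_{0}$, starting from a vertex $v_{0}$ at which $C$ departs along a grey edge. Then every edge $v_{2i}\to v_{2i+1}$ is grey and every edge $v_{2i+1}\to v_{2i+2}$ is black, so
\[
\phi^{\circ}(v_{2i})=\phi^{\cdot}(\sigma(v_{2i}))=\phi^{\cdot}(v_{2i+1})=v_{2i+2}.
\]
Thus $\{v_{0},v_{2},\dots,v_{2m-2}\}$ is $\phi^{\circ}$-invariant. The key step is to verify that these $m$ vertices are pairwise distinct: if $v_{2i}=v_{2j}$ with $i\neq j$, then the unique outgoing grey edge at that vertex would be used twice by $C$, contradicting the fact that the alternating cycles form an edge-disjoint decomposition of $Gr(\phi)$. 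Hence $C$ contributes exactly one $\phi^{\circ}$-orbit, of length $m$.

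For the reverse direction, every vertex $w\in V$ is the source of a unique grey edge, and this grey edge belongs to exactly one alternating cycle $C_{w}$. By the computation above, the $\phi^{\circ}$-orbit of $w$ is precisely the set of vertices of $C_{w}$ from which $C_{w}$ departs on a grey edge, so the assignment ``alternating cycle $\mapsto$ grey-starting $\phi^{\circ}$-orbit'' is a bijection. Counting on both sides yields the theorem.

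The main obstacle is really just bookkeeping: pinning down the composition convention hidden in the product $\phi^{\cdot}\cdot(0,1,\dots,n)$ (left versus right action on $V$), and then establishing that no alternating cycle revisits a grey-starting vertex. Both points rely only on the fact that each vertex has a unique outgoing edge of each color, so once the conventions are fixed the whole correspondence is essentially forced.
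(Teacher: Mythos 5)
Your proof is correct. Note first that the paper itself offers no argument for this statement at all: it is quoted verbatim from \cite{DL} and used as a black box, so your write-up supplies a self-contained proof where the paper has none. Your route is the natural one, and it is essentially the observation that motivates the definition of $\phi^{\circ}$ in the first place: since every vertex of $Gr(\phi)$ has exactly one outgoing edge of each colour, the successor of a grey edge in the (unique) alternating-cycle decomposition is forced to be the black edge leaving its head, so the ``two steps along an alternating cycle'' map on grey-edge tails is exactly $v\mapsto\phi^{\cdot}(\sigma(v))$ with $\sigma=(0,1,\dots,n)$. You correctly identified the composition convention the paper uses (it is confirmed by the identity $\phi^{\circ}(x)=\phi^{\cdot}(x+1)$ appearing later in the proof of Theorem \ref{equiv-alternating}), and your distinctness argument for $v_{0},v_{2},\dots,v_{2m-2}$ is sound --- the only cosmetic quibble is that the contradiction when $v_{2i}=v_{2j}$ is with the edges of the single cycle $C$ being distinct (a cycle does not repeat an edge), rather than with the pairwise edge-disjointness of different cycles; both facts are part of what ``edge-disjoint decomposition into cycles'' means, so nothing is lost. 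The surjectivity step, that every $\phi^{\circ}$-orbit arises from some alternating cycle because every vertex is the tail of a grey edge, closes the bijection cleanly.
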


Notice, that for any permutation $\phi$ and any $m=0,1,...,n$, we cannot have
$\phi^{\circ}(m)=m+1$, since otherwise we would get%
\[
\phi^{\cdot}(x+1)=\left[  \phi^{\circ}\cdot(n,n-1,\dots,0)\right]  (x+1)=x+1,
\]

which is contradiction to $\phi^{\circ}$ being an $(n+1)$-cycle.

Let $\phi\in S_{n}$ be a permutation.

\begin{definition}
\label{def.exchange} For any four numbers $0\leq x,y,w,z\leq n$, such that
$z\rightarrow x\dashrightarrow x+1\rightarrow y$ and $y\rightarrow w$ are
present in some cycles of $Gr(\phi)$, the $x--y$ exchange operation is defined
as follows:

\begin{itemize}
\item if $x=w$ or $y=z$ then $x--y$ exchange operation does not do anything to
$\phi$;

\item if $y=\phi_{i}$ and $x=\phi_{j}$ and $x+1=\phi_{i+1}$, where $i+1<j$,
then $x--y$ exchange operation changes\newline$\langle\phi_{1}\;\;...\;\;\phi
_{i-1}\;\;\phi_{i}\;\;\phi_{i+1}\;\;...\phi_{j-1}\;\;\phi_{j}\;\;...\phi
_{n}\rangle$ to $\langle\phi_{1}\;\;...\;\;\phi_{i-1}\;\;\phi_{i+1}%
\;\;...\phi_{j}\;\;\phi_{i}\;\;\phi_{j+1}\;\;...\phi_{n}\rangle$;

\item if $y=\phi_{i}$ and $x=\phi_{j}$ and $x+1=\phi_{i+1}$, where $j<i$, then
$x--y$ exchange operation changes\newline$\langle\phi_{1}\;\;...\;\;\phi
_{j}\;\;\phi_{j+1}\;\;...\phi_{i-1}\;\;\phi_{i}\;\;\phi_{i+1}\;\;...\phi
_{n}\rangle$ to $\langle\phi_{1}\;\;...\;\;\phi_{j}\;\;\phi_{i}\;\;\phi
_{j+1}\;\;...\phi_{i-1}\;\;\phi_{i+1}\;\;...\phi_{n}\rangle$;

\item if $y=0$ and $x=\phi_{j}$ and $x+1=\phi_{1}$, then $x--y$ exchange
operation changes\newline$\langle\phi_{1}\;\;...\;\;\phi_{j-1}\;\;\phi
_{j}\;\;\phi_{j+1}\;\;...\phi_{n}\rangle$ to $\langle\phi_{j+1}\;\;\phi
_{j+2}\;\;...\phi_{n}\;\;\phi_{1}\;\;...\phi_{j-1}\;\;\phi_{j}\rangle$.
\end{itemize}
\end{definition}

\begin{example}
For instance,

\begin{itemize}
\item If $\phi=\langle4\;\;1\;\;6\;\;2\;\;5\;\;7\;\;3\rangle$, and we apply
$5--1$ exchange operation, we get the permutation $\phi_{\lbrack5--1]}%
=\langle4\;\;6\;\;2\;\;5\;\;1\;\;7\;\;3\rangle$.

\item If $\phi=\langle4\;\;1\;\;6\;\;2\;\;5\;\;7\;\;3\rangle$ and we apply
$4--2$ exchange operation, we get the permutation $\phi_{\lbrack4--2]}%
=\langle4\;\;2\;\;1\;\;6\;\;5\;\;7\;\;3\rangle$.

\item If $\phi=\langle4\;\;6\;\;1\;\;2\;\;5\;\;7\;\;3\rangle$ and we apply
$0--6$ exchange operation, we get the permutation $\phi_{\lbrack0--6]}%
=\langle6\;\;4\;\;1\;\;2\;\;5\;\;7\;\;3\rangle$.

\item If $\phi=\langle4\;\;1\;\;6\;\;3\;\;5\;\;7\;\;2\rangle$ and we apply
$3--0$ exchange operation, we get the permutation $\phi_{\lbrack3--0]}%
=\langle5\;\;7\;\;2\;\;4\;\;1\;\;6\;\;3\rangle$. \newline
\end{itemize}
\end{example}

Notice, that in the alternating cycles of the cycle graph of the new
permutation, obtained after performing the $x--y$ exchange operation, we will
have $y\rightarrow x\dashrightarrow x+1\rightarrow w$ and $z\rightarrow y$.

\begin{definition}
Two permutation $\phi,\theta\in S_{n}$ are in the same \textquotedblleft$x--y$
exchange orbit" if exist permutations $\tau_{1},...,\tau_{k}$ such that
$\tau_{1}=\phi$, $\tau_{k}=\theta$, and, for each $i=1,...,k-1$, either
$\tau_{i}$ can be obtained from $\tau_{i+1}$ by an $x--y$ exchange operation
or $\tau_{i+1}$ can be obtained from $\tau_{i}$ by an $x--y$ exchange operation.
\end{definition}

Notice, that according to our definition, $\phi,\theta\in S_{n}$ can be in the
same $x--y$ exchange orbit, while neither one of them can be obtained from the
other one by several $x--y$ exchange operations.

\begin{definition}
\label{cyclic-cdot}For $\phi\in S_{n}$, such that the big black cycle
$\phi^{\cdot}\in S(1+n)$ contains $(x+1)\rightarrow y\rightarrow x$, which, if
$x=n$, becomes $0\rightarrow y\rightarrow n$, we define $x--y$ cyclic
operation as follows:

\begin{itemize}
\item If $y>x+1$, then in $\phi^{\cdot}$ we replace $x+1$ with $y-1$ and each
$t$, where $t=x+2,...,y-1$, we replace with $t-1$;

\item If $y<x$, then in $\phi^{\cdot}$ we replace $x$ with $y+1$ and each $t$,
for $t=y+1,...,x-1$, we replace with $t+1$;
\end{itemize}
\end{definition}

\begin{example}
For instance,

\begin{itemize}
\item If $\phi=\langle6\;\;5\;\;3\;\;1\;\;4\;\;2\rangle$, then $\phi^{\cdot
}=(0,2,4,1,3,5,6)$. We can perform $1--4$ cyclic operation on $\phi$ and
obtain a new permutation $\theta$. We have $\theta^{\cdot}=(0,3,4,1,2,5,6)$.
Hence, $\theta=\langle6\;\;5\;\;2\;\;1\;\;4\;\;3\rangle$.

\item If $\phi=\langle4\;\;1\;\;5\;\;2\;\;6\;\;3\rangle$, then $\phi^{\cdot
}=(0,3,6,2,5,1,4)$. We can perform $5--2$ cyclic operation on $\phi$ and
obtain a new permutation $\theta$. We have $\theta^{\cdot}=(0,4,6,2,3,1,5)$.
Hence, $\theta=\langle5\;\;1\;\;3\;\;2\;\;6\;\;4\rangle$.

\item If $\phi=\langle4\;\;1\;\;5\;\;2\;\;6\;\;3\rangle$, then $\phi^{\cdot
}=(0,3,6,2,5,1,4)$. We can perform $0--4$ cyclic operation on $\phi$ and
obtain a new permutation $\theta$. We have $\theta^{\cdot}=(0,2,6,1,5,3,4)$.
Hence, $\theta=\langle4\;\;3\;\;5\;\;1\;\;6\;\;2\rangle$.

\item If $\phi=\langle4\;\;1\;\;5\;\;2\;\;6\;\;3\rangle$, then $\phi^{\cdot
}=(0,3,6,2,5,1,4)$. We can perform $6--3$ cyclic operation on $\phi$ and
obtain a new permutation $\theta$. We have $\theta^{\cdot}=(0,3,4,2,6,1,5)$.
Hence, $\theta=\langle4\;\;3\;\;5\;\;1\;\;6\;\;2\rangle$.

\item If $\phi=\langle4\;\;1\;\;5\;\;2\;\;6\;\;3\rangle$, then $\phi^{\cdot
}=(0,3,6,2,5,1,4)$. We can perform $3--0$ cyclic operation on $\phi$ and
obtain a new permutation $\theta$. We have $\theta^{\cdot}=(0,1,6,3,5,2,4)$.
Hence, $\theta=\langle4\;\;2\;\;5\;\;3\;\;6\;\;1\rangle$.
\end{itemize}
\end{example}

\begin{definition}
Two permutation $\phi,\theta\in S_{n}$ are in the same \textquotedblleft$x--y$
cyclic orbit" if exist permutations $\tau_{1},...,\tau_{k}$ such that
$\tau_{1}=\phi$, $\tau_{k}=\theta$, and, for each $i=1,...,k-1$, either
$\tau_{i}$ can be obtained from $\tau_{i+1}$ by an $x--y$ cyclic operation or
$\tau_{i+1}$ can be obtained from $\tau_{i}$ by an $x--y$ cyclic operation.
\end{definition}

Notice, that according to our definition, $\phi,\theta\in S_{n}$ can be in the
same $x--y$ exchange orbit, while neither one of them can be obtained from the
other one by several $x--y$ cyclic operations.\newline

Notice, that belonging to \textquotedblleft the same orbit" extends both
$x--y$ exchange and $x--y$ cyclic operations to equivalence relations.

\begin{definition}
Two permutation $\phi,\theta\in S_{n}$ are called \textquotedblleft$x--y$
equivalent" if exist permutations $\tau_{1},...,\tau_{k}$ such that $\tau
_{1}=\phi$, $\tau_{k}=\theta$, and, for each $i=1,...,k-1$, $\tau_{i}$ and
$\tau_{i+1}$ are in the same $x--y$ exchange orbit or in the same $x--y$
cyclic orbit.
\end{definition}

\begin{lemma}
\label{howcyclicworks} Let $\phi\in S_{n}$ be such that $\phi^{\cdot}\in
S(1+n)$ is of the form $(...,x+1,y,x,...)$. Let $\theta\in S_{n}$ be obtained
from $\phi$ by an $x--y$ cyclic operation. Then $\phi^{\circ}$ is transformed
by that $x--y$ cyclic operation to $\theta^{\circ}$ in the following way:
First write $\phi^{\circ}\in S(1+n)$ in the cyclic notation. Then:

\begin{enumerate}
\item If $y>x+1$:

\begin{itemize}
\item Instead of $x+1$ write the pair $y-1,x$ (replace $x$ in a cycle, which
contains it, by $y-1\mapsto x$);

\item For all $t=x+2,...,y-2$, instead of $t$ write $t-1$;

\item Instead of the pair $y-1,x$ write $y-2$ (replace $y-1\mapsto x$ in a
cycle, which contains it, by $y$).
\end{itemize}

\item If $y<x$:

\begin{itemize}
\item Instead of the pair $x,y$ write $y+1$ (replace $x\mapsto y$ in a cycle,
which contains it, by $y+1$);

\item For all $t=y+2,...,x-2$, instead of $t$ write $t+1$;

\item Instead of $x-1$ write the pair $x,y$ (replace $x-1$ in a cycle, which
contains it, by $x\mapsto y$).
\end{itemize}
\end{enumerate}
\end{lemma}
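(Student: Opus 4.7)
The strategy is to realize the $x$--$y$ cyclic operation on $\phi^{\cdot}$ as a conjugation on labels, and then transport that conjugation through the identity $\phi^{\circ}=\phi^{\cdot}\rho$, where $\rho:=(0,1,\ldots,n)$. In case 1 ($y>x+1$), introduce $\sigma\in S(1+n)$ defined as the cycle $(x+1,\,y-1,\,y-2,\ldots,x+2)$, so $\sigma(x+1)=y-1$, $\sigma(t)=t-1$ for $x+2\le t\le y-1$, and $\sigma$ fixes every other label; in case 2 ($y<x$), take the mirror cycle $\sigma=(x,\,y+1,\,y+2,\ldots,x-1)$. Comparing with Definition \ref{cyclic-cdot}, one sees that rewriting every entry of the cyclic notation of $\phi^{\cdot}$ by $\sigma$ is literally the $x$--$y$ cyclic operation, and this rewriting is exactly conjugation of a permutation by $\sigma$. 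Hence $\theta^{\cdot}=\sigma\,\phi^{\cdot}\,\sigma^{-1}$.

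Next I would multiply on the right by $\rho$. Substituting $\phi^{\circ}=\phi^{\cdot}\rho$ and $\theta^{\circ}=\theta^{\cdot}\rho$ and rearranging yields
\[
\theta^{\circ} \;=\; \sigma\,\phi^{\cdot}\,\sigma^{-1}\rho \;=\; \sigma\cdot\phi^{\circ}\cdot(\rho^{-1}\sigma^{-1}\rho) \;=\; \sigma\cdot\phi^{\circ}\cdot\tau,
\]
where the conjugate $\tau:=\rho^{-1}\sigma^{-1}\rho$ is obtained from $\sigma^{-1}$ by the downward label-shift induced by $\rho^{-1}$; explicitly $\tau=(x,\,x+1,\ldots,y-2)$ in case 1 and $\tau=(y,\,y+1,\ldots,x-1)$ in case 2. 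The compact identity $\theta^{\circ}=\sigma\cdot\phi^{\circ}\cdot\tau$ is the whole engine of the proof.

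The three-step rewrite claimed by the lemma is then read off by computing $\theta^{\circ}(v)=\sigma\bigl(\phi^{\circ}(\tau(v))\bigr)$ for each $v\in\{0,1,\ldots,n\}$. Splitting cases according to whether $v$ is moved by $\tau$ and whether $\phi^{\circ}(\tau(v))$ is moved by $\sigma$, I would verify: outside the affected range, $\theta^{\circ}$ agrees with $\phi^{\circ}$; the label $x+1$ of $\phi^{\circ}$ is routed by $\sigma$ to $y-1$ and pre-composed with a new arrow $y-1\mapsto x$, realizing step 1; the intermediate labels $x+2,\ldots,y-2$ are shifted down by one under $\sigma$, realizing step 2; and the existing arrow $y-1\mapsto x$ inside $\phi^{\circ}$ is contracted by $\tau$ to the single label $y-2$, realizing step 3. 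Case 2 is obtained by reversing the orientation throughout.

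The main obstacle is purely bookkeeping: one must keep track of which of $v$, $\tau(v)$, $\phi^{\circ}(\tau(v))$ and $\sigma\bigl(\phi^{\circ}(\tau(v))\bigr)$ lie in the supports of $\sigma$ and $\tau$, and check that the edge insertions, deletions, and relabellings described in items (1)--(3) really produce $\theta^{\circ}$ in every subcase — in particular when the vertex $x+1$ and the arrow $y-1\mapsto x$ lie in the same cycle of $\phi^{\circ}$ versus in different cycles (causing a merge or a split), and in the degenerate boundaries $y=x+2$ or $y=x-1$ where $\sigma$ and $\tau$ become trivial. Once the algebraic identity $\theta^{\circ}=\sigma\cdot\phi^{\circ}\cdot\tau$ is in place, this verification is a finite case analysis rather than a genuine difficulty.
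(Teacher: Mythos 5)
Your proposal is correct, and it is a cleaner organization of the argument than the paper's own proof, which tracks the label replacements in $\phi^{\cdot}$ one at a time, works two of them out explicitly, and then appeals to ``a careful consideration of how all the other replacements affect the cycles of $\phi^{\circ}$.'' Your single identity $\theta^{\circ}=\sigma\,\phi^{\circ}\,\tau$, with $\tau=\rho^{-1}\sigma^{-1}\rho$ and $\rho=(0,1,\dots,n)$, replaces that hand-waving by a mechanical rule: every arrow $a\mapsto b$ of $\phi^{\circ}$ becomes $\tau^{-1}(a)\mapsto\sigma(b)$, and since $\sigma$ and $\tau^{-1}$ agree on $\{x+2,\dots,y-2\}$ (both shift by one) and differ only at the three labels $x$, $x+1$, $y-1$, the lemma's three rewrite steps drop out at once; the paper's key observation that $\phi^{\circ}$ contains $y-1\mapsto x\mapsto y$ reappears in your framework as the facts that the arrow $y-1\mapsto x$ is fixed by the rewrite while $x\mapsto y$ becomes $y-2\mapsto y$. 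Two small corrections to your sketch. First, in case 2 your explicit $\tau$ is inverted: from $\tau(v)=\sigma^{-1}(v+1)-1$ one gets $\tau(y)=x-1$ and $\tau(v)=v-1$ for $y+1\le v\le x-1$, i.e.\ the cycle $(y,\,x-1,\,x-2,\dots,y+1)$, not $(y,\,y+1,\dots,x-1)$; the formula $\tau=\rho^{-1}\sigma^{-1}\rho$ is what matters and it is right. Second, no merge or split of cycles can occur: the rewrite is local whether or not the vertex $x+1$ and the arrow $y-1\mapsto x$ lie in the same cycle of $\phi^{\circ}$ (one cycle gains the pair $y-1,x$ in place of $x+1$, the other loses it in favor of $y-2$), which is precisely why Theorem \ref{equiv-alternating} can later invoke this lemma to conclude that the number of alternating cycles is preserved.
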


\begin{proof}
Definition \ref{cyclic-cdot} implies that the permutation $\phi^{\circ}%
=\phi^{\cdot}\cdot(0,1,...,n)$ has a cycle, which contains
\[
x\mapsto y=x\dashrightarrow(x+1)\rightarrow y,
\]
and a cycle, which contains
\[
y-1\mapsto x=(y-1)\dashrightarrow y\rightarrow x.
\]
Thus, $\phi^{\circ}$ has a cycle, which contains $y-1\mapsto x\mapsto y$.

If $y>x+1$ then, to obtain $\theta^{\cdot}$, we replaced $x+1$ in $\phi
^{\cdot}$ with $y-1$, and each $t$ in $\phi^{\cdot}$, for $t=x+2,...,y-1$, we
replaced with $t-1$. Let us consider just the following two replacements in
$\phi^{\cdot}$ $-$ of $x+1$ with $y-1$ and of $x+2$ with $x+1$ (without, for
now, performing all the other replacements). Assume, that before these two
replacements, for some elements $u,v$ of $\phi^{\cdot}$, we had $u\rightarrow
(x+1)$ and $(x+2)\rightarrow v$ in $\phi^{\cdot}$. Then $\phi^{\circ}$, before
performing these two replacements, had a cycle, which contained
\[
(u-1)\mapsto(x+1)\mapsto v.
\]
Replacing in that cycle $x+1$ with $y-1$ and $x+2$ with $x+1$ will change
\[
(u-1)\mapsto(x+1)\mapsto v
\]
to
\[
(u-1)\mapsto(y-1)\mapsto x\mapsto v.
\]
A careful consideration of how all the other replacements affect the cycles of
$\phi^{\circ}$, inside which they are performed, yields our Lemma for the case
$y>x+1$.

If $y<x$ then, to obtain $\theta^{\cdot}$, we replaced we replaced $x$ in
$\phi^{\cdot}$ with $y+1$, and each $t$, for $t=y+1,...,x-1$, we replaced with
$t+1$. Let's consider just the following two replacements in $\phi^{\cdot}$
$-$ of $x$ with $y+1$ and of $y+1$ with $y+2$ (without, for now, performing
all the other replacements). Assume, that before these two replacements, for
some element $v$ of $\phi^{\cdot}$, we had $(y+1)\rightarrow v$ in
$\phi^{\cdot}$. Then $\phi^{\circ}$, before performing these two replacements,
had a cycle, which contained
\[
(y-1)\mapsto x\mapsto y\mapsto v.
\]
Replacing in that cycle $x$ with $y+1$ and $y+1$ with $y+1$ will change
\[
y-1\mapsto x\mapsto y\mapsto v
\]
in that cycle to
\[
y-1\mapsto y+1\mapsto v.
\]
A careful consideration of how all the other replacements affect the cycles of
$\phi^{\circ}$, inside which they are performed, yields our Lemma for the case
$y<x$.
\end{proof}

\begin{definition}
\cite{Hall1940}\label{def.isoclinic} Two groups $G_{1}$ and $G_{2}$ are called
isoclinic, if the following three conditions holds

\begin{itemize}
\item There exists an isomorphism $\alpha$ from $G_{1}/Z(G_{1})$ onto
$G_{2}/Z(G_{2})$;

\item There exists an isomorphism $\beta$ from the commutator subgroup
$G_{1}^{\prime}$ to the commutator subgroup $G_{2}^{\prime}$;

\item If $\alpha(a_{1}Z_{1})=a_{2}Z_{2}$, and $\alpha(b_{1}Z_{1})=b_{2}Z_{2}$,
then necessarily
\[
\beta(a_{1}^{-1}b_{1}^{-1}a_{1}b_{1})=a_{2}^{-1}b_{2}^{-1}a_{2}b_{2}.
\]

\end{itemize}
\end{definition}

For instance, every two abelian groups are isoclinic. The Dihedral group
$D_{4}$ and the Quaternion group $Q_{8}$ give an example of isoclinism for
non-abelian groups.

\begin{definition}
\cite{Buckley2014} Two groups $G_{1}$ and $G_{2}$ are called weakly isoclinic,
if the first two conditions of Definition \ref{def.isoclinic} holds true, but
the third condition of Definition \ref{def.isoclinic} does not necessarily hold.
\end{definition}

\section{Preliminaries}

The following well-known results, which we reproduce in the Lemmas
\ref{commute-conjug} and \ref{conjug-decomp}, are crucial for our work.

\begin{lemma}
\label{commute-conjug} For any $a,b\in G$ we have $ab\sim ba$.
\end{lemma}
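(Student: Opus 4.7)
The plan is to exhibit an explicit conjugating element. I would simply note that for any $a,b \in G$, the element $a$ (or equivalently $b^{-1}$) witnesses the conjugacy: we have
\[
a^{-1}(ab)a = (a^{-1}a)(ba) = ba,
\]
so $ab$ and $ba$ lie in the same conjugacy class. The same element works uniformly, so nothing needs to be done case-by-case.

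There is no real obstacle here; the only thing to watch is picking the right conjugator. An alternative presentation is $b(ab)b^{-1} = (ba)(bb^{-1}) = ba$, which shows that $b$ conjugates $ab$ to $ba$ as well. Either identity immediately gives $ab \sim ba$ by the definition of the relation $\sim$ recorded earlier in the preliminaries (that $g \sim h$ means $h = xgx^{-1}$ for some $x \in G$).

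Since this is a one-line verification, I would present it in a single displayed equation inside the proof environment, remark that the conjugator is $a$ (or $b$), and close. No induction, no structural decomposition, no use of conjugacy class arithmetic is needed, and the statement holds in every group, not just the finite ones considered in the rest of the paper.
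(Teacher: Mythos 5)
Your proof is correct and is essentially the paper's own argument: the paper conjugates by $b$, computing $b(ab)b^{-1}=ba$, which is exactly your alternative presentation, and your primary computation $a^{-1}(ab)a=ba$ is the same idea with the other natural conjugator. Nothing further is needed.
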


\begin{proof}
$b(ab)b^{-1}=babb^{-1}=ba.$
\end{proof}

\begin{lemma}
\label{conjug-decomp} For any $x$ and $y$ from the same conjugacy class
$\Omega_{i}$ of $G$ there are
\[
\frac{|G|}{|\Omega_{i}|}=\left\vert C_{G}(x)\right\vert =\left\vert
C_{G}(y)\right\vert
\]
different ways to break $x$ into a product $x=ab$ of elements $a,b\in G$ so
that $ba=y$.
\end{lemma}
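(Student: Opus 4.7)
The plan is to parametrize the pairs $(a,b)$ by just one of the two entries and then reduce the counting problem to a standard orbit-stabilizer count.

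First, I would note that the condition $ab = x$ allows us to write $b = a^{-1}x$ so the pair $(a,b)$ is determined by $a$ alone. Substituting into the second condition $ba = y$, we get $a^{-1}xa = y$. Thus counting the pairs $(a,b)$ with $ab = x$ and $ba = y$ is equivalent to counting the elements $a \in G$ that conjugate $x$ into $y$, i.e., the set $T = \{a \in G : a^{-1}xa = y\}$.

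Next, since $x$ and $y$ are given to lie in the same conjugacy class $\Omega_{i}$, the set $T$ is nonempty. Fixing any particular $a_{0} \in T$, a direct check shows that $a \in T$ if and only if $a_{0}^{-1}a \in C_{G}(x)$ (equivalently, $aa_{0}^{-1} \in C_{G}(y)$), so $T$ is a left coset $a_{0}\,C_{G}(x)$ (and a right coset $C_{G}(y)\,a_{0}$). Hence
\[
|T| = |C_{G}(x)| = |C_{G}(y)|.
\]

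Finally, I would invoke the orbit-stabilizer identity $|G| = |\Omega(x)|\cdot|C_{G}(x)|$ recalled in the paper to rewrite $|C_{G}(x)| = |G|/|\Omega_{i}|$, which gives exactly the claimed count. There is no genuine obstacle here: once the bijection $(a,b) \leftrightarrow a$ between solution pairs and conjugators is noted, the rest is just the standard stabilizer–coset argument.
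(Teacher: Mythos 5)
Your proof is correct and follows essentially the same route as the paper's: both fix one particular solution and observe that the full solution set is a coset of a centralizer, the paper parametrizing the factorizations $a'b'=x$ by $g=b'b^{-1}$ and showing $g$ ranges over $C_{G}(y)$, while you eliminate $b=a^{-1}x$ and count conjugators directly. One small slip: from $a^{-1}xa=a_{0}^{-1}xa_{0}$ one gets $aa_{0}^{-1}\in C_{G}(x)$ and $a_{0}^{-1}a\in C_{G}(y)$, so $T=C_{G}(x)\,a_{0}=a_{0}\,C_{G}(y)$ (you have the two centralizers interchanged), but since $|C_{G}(x)|=|C_{G}(y)|$ this does not affect the count.
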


\begin{proof}
Since $x\sim y$ there exists some $b\in G$ such that $bxb^{-1}=y$. If we set
$a=xb^{-1}$, we get $ab=xb^{-1}b=x$ and $ba=bxb^{-1}=y$. For each pair
$a\prime,b\prime$ of elements of $G$, such that $a^{\prime}b^{\prime}=x$,
there exists a unique element $g=b^{\prime}b^{-1}=\left(  a^{\prime}\right)
^{-1}a$ of $G$, such that $a^{\prime}=ag^{-1}$ and $b^{\prime}=gb$. Now
\[
b\prime a\prime=gbag^{-1}=gyg^{-1}.
\]
Hence, the pairs $a^{\prime},b^{\prime}$ of elements of $G$ such that
$a^{\prime}b^{\prime}=x$ and $b^{\prime}a^{\prime}=y$, are in one-to-one
correspondence with the elements $g$ from $C_{G}(Y)$. So, the number of pairs
$a^{\prime},b^{\prime}$ of elements of $G$ ,such that $a^{\prime}b^{\prime}=x$
and $b^{\prime}a^{\prime}=y$, is equal to $|C_{G}(y)|$. But
\[
\left\vert C_{G}(y)\right\vert =\frac{|G|}{|\Omega_{i}|}=|C_{G}(x)|.
\]

\end{proof}

The classical result on the commute probability (see \cite{G}) follows immediately.

\begin{theorem}
\label{commute} $Pr^{2}(G)=Pr(a_{1}a_{2}=a_{2}a_{1})=\frac{c(G)}{|G|}$.
\end{theorem}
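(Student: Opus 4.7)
The plan is to count the solutions $(a_1,a_2) \in G \times G$ of $a_1 a_2 = a_2 a_1$ directly via Lemma \ref{conjug-decomp}, then divide by $|G|^2$.

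First I would partition the solution set according to the value $x = a_1 a_2$. For a fixed $x \in G$, the equation $a_1 a_2 = x$ together with $a_2 a_1 = a_1 a_2 = x$ is exactly the special case $y = x$ of the situation analyzed in Lemma \ref{conjug-decomp}. Since $x$ and itself trivially belong to the same conjugacy class, that lemma applies and gives that the number of factorizations $x = a_1 a_2$ satisfying $a_2 a_1 = x$ equals $|C_G(x)|$. Hence
\[
L_{\langle 2\;\;1\rangle}(G) \;=\; \sum_{x \in G} |C_G(x)|.
\]

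Next I would regroup this sum by conjugacy classes. Using $|C_G(x)| = |G|/|\Omega(x)|$ from the orbit--stabilizer identity recalled in the preliminaries, and the fact that $|C_G(x)|$ is constant on each $\Omega_i$, I get
\[
\sum_{x \in G} |C_G(x)| \;=\; \sum_{i=1}^{c(G)} |\Omega_i| \cdot \frac{|G|}{|\Omega_i|} \;=\; c(G) \cdot |G|.
\]
Dividing by $|G|^2$ as prescribed by the formula $Pr_\pi(G) = L_\pi(G)/|G|^n$ (here $n=2$ and $\pi = \langle 2\;\;1\rangle$) yields $Pr^2(G) = c(G)/|G|$, as desired.

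There is no real obstacle: the whole argument is a one-line reduction to Lemma \ref{conjug-decomp} followed by the standard class-equation-style repackaging $|G| = |\Omega_i| \cdot |C_G(x)|$. The only point worth flagging is to notice that the commuting condition $a_1 a_2 = a_2 a_1$ is precisely the ``$x = y$'' instance of Lemma \ref{conjug-decomp}, so that the count $|C_G(x)|$ can be invoked without loss for every $x \in G$ simultaneously.
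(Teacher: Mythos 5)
Your proposal is correct and is essentially the paper's own argument: the paper likewise counts, for each fixed $x\in G$, the $\frac{|G|}{|\Omega(x)|}=|C_G(x)|$ factorizations $ab=x=ba$ (i.e.\ the $y=x$ case of Lemma \ref{conjug-decomp}) and then sums over each conjugacy class to get $L_{\langle 2\;\;1\rangle}(G)=|G|\cdot c(G)$. No gaps; the two proofs coincide.
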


\begin{proof}
For each $x\in G$ there are exactly $\frac{|G|}{|\Omega(x)|}$ different ways
to write $ab=x=ba$, where $a,b\in G$. Thus, for each $\Omega_{i}$ there are
$|G|$ different ways to write $ab=x=ba$, where $a,b\in G$ and $x\in\Omega(i)$.
Indeed, there are $|\Omega(x)|$ different elements in $\Omega_{i}$ and for
each one of them there are $\frac{|G|}{|\Omega(x)|}$ different ways to break
them into a product of commuting elements. Thus, $L_{\langle2\;\;1\rangle
}(G)=\left\vert G\right\vert \cdot c(G)$ and
\[
Pr^{2}(G)=\frac{\left\vert G\right\vert \cdot c(G)}{|G|^{2}}=\frac{c(G)}%
{|G|}.
\]

\end{proof}

\section{Calculation of $Spec_{4}(G)$}

Before addressing the general case in the following sections, we study here
the spectrum of probabilities for permutations from $S_{4}$. The material in
this section is self-contained and will help to illustrate the general case.
We begin with the following Lemma, which is a particular case of the general
fact, observed by Das and Nath \cite{DasNath2}, that if
\[
\omega_{1}=a_{1}a_{2}...a_{2n}a_{1}^{-1}a_{2}^{-1}\cdots a_{2n}^{-1}%
\]
and
\[
\omega_{2}=a_{1}a_{2}...a_{2n+1}a_{1}^{-1}a_{2}^{-1}\cdots a_{2n+1}^{-1}%
\]
then $Pr_{g}^{\omega_{1}}(G)=Pr_{g}^{\omega_{2}}(G)$.

\begin{lemma}
\label{threecommute} $Pr^{3}(G)=Pr_{\langle3\;\;2\;\;1\rangle}%
(G)=Pr(abc=cba)=Pr^{2}(G)=\frac{|c(G)|}{|G|}$.
\end{lemma}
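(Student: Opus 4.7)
The plan is to rewrite the equation $abc = cba$ as a conjugation equation and then count solutions via centralizer sizes, in the same spirit as the proof of Theorem~\ref{commute}.

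First I would observe that $abc = cba$ is equivalent to $c^{-1}(ab)c = ba$. By Lemma~\ref{commute-conjug} we always have $ab \sim ba$, so for every choice of $a,b \in G$ the set of $c \in G$ satisfying this conjugation equation is a non-empty coset of $C_G(ab)$, and hence has exactly $|C_G(ab)| = |G|/|\Omega(ab)|$ elements. Summing over all pairs $(a,b)$ yields
\[
L_{\langle 3\;2\;1\rangle}(G) \;=\; \sum_{a,b \in G} |C_G(ab)|.
\]

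The second step is to reparameterize the sum by $u = ab$. Since $(a,b) \mapsto (a,ab)$ is a bijection of $G \times G$ onto itself, each $u \in G$ occurs as a product $ab$ in exactly $|G|$ ways, so the sum becomes $|G| \sum_{u \in G}|C_G(u)|$. The class equation then gives $\sum_{u \in G}|C_G(u)| = c(G)\cdot|G|$, because each conjugacy class $\Omega_i$ contributes $|\Omega_i| \cdot |G|/|\Omega_i| = |G|$. Hence $L_{\langle 3\;2\;1\rangle}(G) = |G|^2\, c(G)$, and dividing by $|G|^3$ gives $Pr^3(G) = c(G)/|G| = Pr^2(G)$, as required.

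The only delicate point in the argument is the assertion that for every $(a,b)$ there \emph{exists} at least one $c$ conjugating $ab$ to $ba$; this is precisely Lemma~\ref{commute-conjug}. Without that input the solution set could be empty, and the clean count $|C_G(ab)|$ would fail. Everything else is a routine centralizer/orbit count, so I do not expect any serious obstacle once the conjugation reformulation is in place.
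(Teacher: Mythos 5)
Your proof is correct. The only difference from the paper's argument is the initial reformulation: the paper multiplies $abc=cba$ on the right by $b$ to obtain $(ab)(cb)=(cb)(ab)$, i.e.\ a \emph{commutation} condition between the free parameters $u=ab$ and $v=cb$ (for fixed $b$), and then reuses the commuting-pair count underlying Theorem~\ref{commute}; you instead conjugate to obtain $c^{-1}(ab)c=ba$ and count the solution set in $c$ as a coset of $C_{G}(ab)$, using Lemma~\ref{commute-conjug} only for non-emptiness. Your route is essentially the coset argument of Lemma~\ref{conjug-decomp} applied directly, so it stays entirely within the paper's toolkit; both reductions terminate in the same class-equation identity $\sum_{u\in G}\left\vert C_{G}(u)\right\vert =c(G)\cdot\left\vert G\right\vert$ and give $L_{\langle3\;\;2\;\;1\rangle}(G)=|G|^{2}\,c(G)$. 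Neither approach is more general than the other; yours has the small advantage of making the existence issue (why the solution set in $c$ is non-empty) explicit, whereas the paper's version avoids any existence question because $ab$ and $cb$ range freely and independently once $b$ is fixed.
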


\begin{proof}
From $abc=cba$, we get $abcb=cbab$. Thus, the commutator $[cb,ab]=1$.
Therefore, if we choose and fix an element $b$, then the two elements $a$ and
$c$ should be chosen in a such way that $[ab,cb]=1$. Hence, for every of an
element $a$, we must choose an element $c$ in such a way, that $cb\in
C_{G}(ab)$. Since $G$ is a group, the number of different elements of the for
$ab\in G$, for a fixed $b$, is $|G|$. Similarly, the number of different
elements of the form $cb$ in each $C_{G}(ab)$ is $|C_{G}(ab)|$. Since $b$ runs
through all elements of $G$, we get
\[
L_{\langle3\;\;2\;\;1\rangle}(G)=\left\vert G\right\vert \cdot\sum
\limits_{a\in G}\left\vert C_{G}(a)\right\vert =|G|^{2}\cdot|C(G)|.
\]
This implies
\[
Pr_{\langle3\;\;2\;\;1\rangle}(G)=\frac{|C(G)|}{|G|}=Pr_{\langle2\;\;1\rangle
}(G).
\]

\end{proof}

Notice, that for all the five non-trivial permutations from $S_{3}$ we get the
same probability for the corresponding permutational equality. Also observe,
that these five permutations have two alternating cycles in their cycle graph,
while the identity permutation has four alternating cycles in its cycle graph.

\begin{theorem}
\label{specthree} $Spec_{3}(G)=\{\frac{c(G)}{|G|},1\}$, where $Pr(a_{1}%
a_{2}a_{3}=a_{\pi_{1}}a_{\pi_{2}}a_{\pi_{3}})=\frac{c(G)}{|G|}$ for every
$\pi\neq Id$ in $S_{3}$.
\end{theorem}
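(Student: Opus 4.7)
The plan is to enumerate the six elements of $S_3$ and verify the claim for each one individually. The identity permutation trivially gives $Pr_{Id}(G)=1$, so the content is to show that each of the five non-identity permutations yields probability $c(G)/|G|$. I would group these five permutations by the structural form of the resulting equation, which gives essentially three cases to handle.

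The first case consists of the two adjacent transpositions $\pi=\langle 2\;1\;3\rangle$ and $\pi=\langle 1\;3\;2\rangle$. In each of these, a direct cancellation in $a_1a_2a_3=a_{\pi_1}a_{\pi_2}a_{\pi_3}$ collapses the equation to a commutativity statement about two of the $a_i$, while the third variable remains completely free. Applying Theorem \ref{commute} immediately yields $L_\pi(G)=|G|\cdot|G|\cdot c(G)=|G|^2 c(G)$, hence $Pr_\pi(G)=c(G)/|G|$. The second case is the permutation $\pi=\langle 3\;2\;1\rangle$, which is exactly the content of Lemma \ref{threecommute}.

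The nontrivial case is that of the two $3$-cycles $\pi=\langle 2\;3\;1\rangle$ and $\pi=\langle 3\;1\;2\rangle$. For $\pi=\langle 2\;3\;1\rangle$, the equation $a_1a_2a_3=a_2a_3a_1$ is equivalent to $a_1$ commuting with $a_2a_3$. I would count solutions by grouping according to the value $g=a_2a_3\in G$: for each fixed $g$ there are exactly $|G|$ pairs $(a_2,a_3)$ with $a_2a_3=g$ (choose $a_2$ freely, set $a_3=a_2^{-1}g$), and given such a pair the admissible $a_1$ form the centralizer $C_G(g)$. Hence
\[
L_\pi(G)=\sum_{g\in G}|G|\cdot|C_G(g)|=|G|\sum_{g\in G}|C_G(g)|=|G|^2c(G),
\]
using the standard identity $\sum_{g\in G}|C_G(g)|=|G|\,c(G)$ (which follows at once from summing $|G|=|\Omega(g)|\cdot|C_G(g)|$ over conjugacy classes). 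The argument for $\pi=\langle 3\;1\;2\rangle$ is symmetric: after a cyclic relabeling the equation says that $a_3$ commutes with $a_1a_2$, and the same counting applies.

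There is no genuine obstacle here; the main work is bookkeeping over the five non-identity permutations and recognizing that each reduces to a commutativity question already handled by Theorem \ref{commute} or Lemma \ref{threecommute}. The one small subtlety is the $3$-cycle case, where one must be careful to re-parametrize the product $a_ia_j$ as a single group element in order to invoke the class-equation identity; this is the only place where anything beyond elementary cancellation is used.
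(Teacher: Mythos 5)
Your proof is correct and follows essentially the same route as the paper: the two adjacent transpositions reduce by cancellation to Theorem \ref{commute}, the reversal $\langle 3\;2\;1\rangle$ is Lemma \ref{threecommute}, and the two $3$-cycles are handled by re-parametrizing the product of two of the variables as a single group element $g$ and counting solutions of $ga=ag$. The only cosmetic difference is that you evaluate $\sum_{g\in G}|C_G(g)|=|G|\,c(G)$ explicitly where the paper simply cites the two-variable count from Theorem \ref{commute}; the content is identical.
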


\begin{proof}
By Lemma \ref{threecommute}, $Pr(a_{1}a_{2}a_{3}=a_{3}a_{2}a_{1})=\frac
{c(G)}{|G|}$. Clearly,
\[
Pr(a_{1}a_{2}a_{3}=a_{2}a_{1}a_{3})=Pr(a_{1}a_{2}=a_{2}a_{1})=\frac{c(G)}{|G|}%
\]
and
\[
Pr(a_{1}a_{2}a_{3}=a_{1}a_{3}a_{2})=Pr(a_{2}a_{3}=a_{3}a_{2})=\frac{c(G)}%
{|G|}.
\]

To compute $Pr(a_{1}a_{2}a_{3}=a_{3}a_{1}a_{2})$ notice that if we denote the
product $a_{1}a_{2}$ by $g$ then, as $a_{1}$ and $a_{2}$ run through all
elements of $G$, their product $g$ will become equal to every element of $G$
exactly $|G|$ times. Thus, the number $L(a_{1}a_{2}a_{3}=a_{3}a_{1}a_{2})$ of
different solutions of the equation $a_{1}a_{2}a_{3}=a_{3}a_{1}a_{2}$ in $G$
is $|G|$ times the number of different solutions of equation $ga_{3}=a_{3}g$.
Thus
\[
Pr(a_{1}a_{2}a_{3}=a_{3}a_{1}a_{2})=\frac{\left\vert G\right\vert
\cdot\left\vert G\right\vert \cdot c(G)}{|G|^{3}}=\frac{c(G)}{|G|}.
\]

The same argument, but with denoting $g=a_{2}a_{3}$, shows that
\[
Pr(a_{1}a_{2}a_{3}=a_{2}a_{3}a_{1})=\frac{c(G)}{|G|}.
\]
Obviously, $Pr(a_{1}a_{2}a_{3}=a_{1}a_{2}a_{3})=1$.
\end{proof}

\begin{lemma}
For any non-identity permutation $\pi\in S_{4}$, if $\pi_{1}=1$ or $\pi_{4}%
=4$, then $Pr_{\pi}(G)=\frac{c(G)}{|G|}$.
\end{lemma}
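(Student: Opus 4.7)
The plan is to reduce to the $n=3$ case already handled in Theorem \ref{specthree} by cancelling the fixed coordinate. The key observation is that when $\pi$ fixes an endpoint (either position $1$ on the left or position $4$ on the right), the corresponding variable appears at the extreme left (or right) of both sides of the permutation equality and can be cancelled, leaving a permutation equality on three variables.

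First I would split into the two cases. Suppose $\pi_{1}=1$; then the defining equality reads
\[
a_{1}a_{2}a_{3}a_{4}=a_{1}a_{\pi_{2}}a_{\pi_{3}}a_{\pi_{4}},
\]
which, after left-cancellation of $a_{1}$, is equivalent to $a_{2}a_{3}a_{4}=a_{\pi_{2}}a_{\pi_{3}}a_{\pi_{4}}$. The restriction $\sigma=\langle\pi_{2}\;\pi_{3}\;\pi_{4}\rangle$ is a permutation of $\{2,3,4\}$, and since $\pi\neq\mathrm{Id}$ and $\pi_{1}=1$, we have $\sigma\neq\mathrm{Id}$. Because $a_{1}$ is free and independent of $a_{2},a_{3},a_{4}$, counting solutions gives
\[
L_{\pi}(G)=|G|\cdot L_{\sigma}(G),
\]
so $Pr_{\pi}(G)=Pr_{\sigma}(G)$ after relabeling $\{2,3,4\}\to\{1,2,3\}$. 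Theorem \ref{specthree} then yields $Pr_{\pi}(G)=c(G)/|G|$.

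The case $\pi_{4}=4$ is symmetric: right-cancellation of $a_{4}$ reduces the equality to $a_{1}a_{2}a_{3}=a_{\pi_{1}}a_{\pi_{2}}a_{\pi_{3}}$, where $\langle\pi_{1}\;\pi_{2}\;\pi_{3}\rangle$ is a non-identity permutation of $\{1,2,3\}$, and the same reasoning applies.

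There is essentially no obstacle here; the only point requiring care is the bookkeeping that the free variable being cancelled truly contributes an independent factor of $|G|$ to $L_{\pi}(G)$, so that normalizing by $|G|^{4}$ recovers exactly the $n=3$ probability. Once this is observed, the lemma is an immediate consequence of Theorem \ref{specthree}.
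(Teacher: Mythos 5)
Your proposal is correct and follows essentially the same route as the paper: cancel the fixed endpoint variable, observe that the remaining three-variable equality corresponds to a non-identity permutation handled by Theorem \ref{specthree}, and account for the free cancelled variable contributing a factor of $|G|$ to $L_{\pi}(G)$. No further comment is needed.
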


\begin{proof}
If $\pi_{1}=1$ then the equation $a_{1}a_{2}a_{3}a_{4}=a_{\pi_{1}}a_{\pi_{2}%
}a_{\pi_{3}}a_{\pi_{4}}$ is equivalent to the equation $a_{2}a_{3}a_{4}%
=a_{\pi_{2}}a_{\pi_{3}}a_{\pi_{4}}$. And, by Theorem \ref{specthree}, there
are $|G|^{2}\cdot c(G)$ different equations $a_{2}a_{3}a_{4}=a_{\pi_{2}}%
a_{\pi_{3}}a_{\pi_{4}}$ in $G$. To each one of these equations corresponds
exactly $|G|$ different equations of the form $a_{1}a_{2}a_{3}a_{4}=a_{\pi
_{1}}a_{\pi_{2}}a_{\pi_{3}}a_{\pi_{4}}$ (since $a_{1}$ can be any element of
$G$). Thus $L_{\pi}(G)=|G|^{3}\cdot c(G)$ and $Pr_{\pi}(G)=\frac{c(G)}{|G|}$.
The same argument, but with $a_{4}$ instead of $a_{1}$, is applied for
$\pi_{4}=4$.
\end{proof}

\begin{lemma}
For any non-identity permutation $\pi\in S_{4}$, if $\pi_{i+1}=\pi_{1}+1$
(where $4+1$ becomes $1$), for some $1\leq i\leq3$, then $Pr_{\pi}%
(G)=\frac{c(G)}{|G|}$.
\end{lemma}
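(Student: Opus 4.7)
Let $j = \pi_i$, so by hypothesis $\pi_{i+1} = j+1$ (with cyclic convention $4+1 = 1$). The idea is to merge the adjacent pair $a_j a_{j+1}$ into a single new variable $c$, reducing the four-variable equation to a three-variable one in $S_3$ and applying Theorem~\ref{specthree}.

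In the main case $j \in \{1, 2, 3\}$, the product $a_j a_{j+1}$ appears as consecutive factors on both sides of the equation: at positions $j, j+1$ of $a_1 a_2 a_3 a_4$ and at positions $i, i+1$ of $a_{\pi_1} a_{\pi_2} a_{\pi_3} a_{\pi_4}$. Setting $c = a_j a_{j+1}$ and keeping $a_j$ as a free parameter yields a bijection $(a_1, a_2, a_3, a_4) \leftrightarrow (c, a_k, a_l, a_j)$ on $G^4$ (with $a_{j+1} = a_j^{-1} c$ and $\{k, l\} = \{1,2,3,4\} \setminus \{j, j+1\}$). Both sides collapse to products of three factors in the new variables, giving an equation of the form $b_1 b_2 b_3 = b_{\rho_1} b_{\rho_2} b_{\rho_3}$ for some $\rho \in S_3$, where $b_1, b_2, b_3$ are the three merged variables in the order they appear on the left. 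The crucial bookkeeping step is verifying $\rho \ne \mathrm{Id}$: if $\rho = \mathrm{Id}$, then slot-matching forces $i = j$ and $\pi_t = t$ for all $t \notin \{j, j+1\}$, which combined with $\pi_j = j$ and $\pi_{j+1} = j+1$ forces $\pi = \mathrm{Id}$, contradicting our hypothesis. Then Theorem~\ref{specthree} yields $L_\rho(G) = |G|^2 c(G)$, and since $a_j$ is unconstrained by the reduced equation, $L_\pi(G) = |G| \cdot L_\rho(G) = |G|^3 c(G)$, giving $Pr_\pi(G) = c(G)/|G|$.

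The remaining (``cyclic'') subcase $j = 4$, $j+1 = 1$ is more delicate because $a_4 a_1$ is not adjacent on the left-hand side in the linear sense. Here I would first conjugate the equation (multiplying on the left by $a_1^{-1}$ and on the right by $a_1$) so that $a_4 a_1$ becomes adjacent on the left as $a_2 a_3 a_4 a_1$, then substitute $c = a_4 a_1$ and count by fibers. The expected main obstacle is that this conjugation introduces $a_1^{\pm 1}$ on the right-hand side, so the final step cannot invoke Theorem~\ref{specthree} directly but must instead be handled by a Burnside-type summation using $\sum_{g \in G} |C_G(g)| = |G| c(G)$ applied to the resulting conjugacy equation on the merged product.
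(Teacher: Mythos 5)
Your main case ($j=\pi_i\in\{1,2,3\}$) is correct and is essentially the paper's own proof: the paper likewise replaces the consecutive pair $a_{\pi_i}a_{\pi_{i+1}}=a_ja_{j+1}$ by a single variable $g$, notes that this pair is consecutive on both sides of the equation, invokes the $S_3$ result, and counts fibers to get $L_\pi(G)=|G|^3\cdot c(G)$. Your additional verification that the reduced permutation $\rho\in S_3$ cannot be the identity is a detail the paper omits, and your argument for it is right.

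The genuine gap is the ``cyclic'' subcase $\pi_i=4$, $\pi_{i+1}=1$. What you give there is only a plan (``I would first conjugate\dots must instead be handled by a Burnside-type summation''), not a proof --- and no proof is possible, because the asserted conclusion is false in that subcase. Take $\pi=\langle2\;\;4\;\;1\;\;3\rangle$: its only adjacency is the cyclic one ($\pi_2=4$, $\pi_3=1$), so it satisfies the hypothesis as literally stated, yet a later theorem of the same section shows $Pr(a_1a_2a_3a_4=a_2a_4a_1a_3)=Pr^4(G)$, which for $G=D_4$ equals $\frac{17}{32}\neq\frac{5}{8}=\frac{c(D_4)}{|D_4|}$. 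The same happens for $\langle4\;\;1\;\;3\;\;2\rangle$ and $\langle3\;\;2\;\;4\;\;1\rangle$; all three have a single alternating cycle in their cycle graph. The parenthetical ``where $4+1$ becomes $1$'' in the statement is therefore an error (as is the typo $\pi_1+1$ for $\pi_i+1$): the hypothesis must be read as a genuine adjacency $\pi_{i+1}=\pi_i+1$ with $\pi_i\leq 3$, which is exactly the situation the paper's proof treats, and which together with the preceding lemma already accounts for all fifteen permutations with three alternating cycles. The correct move is to drop the cyclic subcase entirely, not to attempt to prove it; any ``Burnside-type summation'' you set up for $\langle2\;\;4\;\;1\;\;3\rangle$ would have to fail, since after conjugating and merging $c=a_4a_1$ the right-hand side retains a stray $a_1^{\pm1}$ that cannot be absorbed into the three merged variables, and the equation genuinely remains a one-alternating-cycle (i.e.\ $Pr^4$) type equality.
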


\begin{proof}
The condition $\pi_{i+1}=\pi_{i}+1$ means that $\pi$ keeps two consequent
numbers $\pi_{i}$ and $\pi_{i}+1$ in their consequent order. Such are, for
example, the permutations $\langle4\;\;3\;\;1\;\;2\rangle,\langle
3\;\;4\;\;1\;\;2\rangle,$\newline$\langle2\;\;3\;\;4\;\;1\rangle,$
$\langle4\;\;2\;\;3\;\;1\rangle,\langle4\;\;1\;\;2\;\;3\rangle$. Denote the
product $a_{\pi_{i}}a_{\pi_{i+1}}$ by $g$. Then, as $a_{\pi_{i}}$ and
$a_{\pi_{i+1}}$ run over all elements of $G$, the product $g$ will become
equal to each element of $G$ exactly $|G|$ times. By Theorem
\ref{threecommute} the new equation (with $g$ instead of $a_{\pi_{i}}%
a_{\pi_{i+1}}$) has exactly $|G|^{2}\cdot c(G)$ solutions. But each solution
is obtained $G$ times, as $a_{\pi_{i}}$ and $a_{\pi_{i+1}}$ run over all
elements of $G$. Thus $L_{\pi}(G)=|G|^{3}\cdot c(G)$ and $Pr_{\pi}%
(G)=\frac{c(G)}{|G|}$.
\end{proof}

We showed that for fifteen permutations $\pi\in S_{4}$, the probability of the
corresponding equality satisfies $Pr_{\pi}(G)=\frac{c(G)}{|G|}$. Notice, that
these fifteen permutations have exactly three alternating cycles in their
cycle graph $Gr(\pi)$. Clearly, for the identity permutation, which has five
alternating cycles in its cycle graph, the probability of the corresponding
equality is $1$. Now we will show that the remaining eight permutations in
$S_{4}$, which have one alternating cycle in their cycle graph, their
corresponding equalities all have the same probability, which, in general, is
different from $1$ and from $\frac{c(G)}{|G|}$.

\begin{theorem}
\label{mainfour}
\begin{gather*}
Pr(a_{1}a_{2}a_{3}a_{4}=a_{2}a_{1}a_{4}a_{3})=Pr(a_{1}a_{2}a_{3}a_{4}%
=a_{4}a_{3}a_{2}a_{1})=Pr^{4}(G)=\\
=\frac{\sum\limits_{x,y\in G}|Stab.Prod_{2}(x,y)|}{|G|^{4}}=\sum
\limits_{i,k,j=1}^{c(G)}\frac{\left\vert \Omega_{j}\right\vert \cdot
c_{i,k;j}^{2}(G)}{\left\vert \Omega_{i}\right\vert \cdot\left\vert \Omega
_{k}\right\vert \cdot\left\vert G\right\vert ^{2}}=\sum\limits_{i,k,j=1}%
^{c(G)}\frac{\left\vert \Omega_{j}\right\vert \cdot c_{i,k;j}(G)\cdot
c_{k,i;j}(G)}{\left\vert \Omega_{i}\right\vert \cdot\left\vert \Omega
_{k}\right\vert \cdot\left\vert G\right\vert ^{2}}.
\end{gather*}

\end{theorem}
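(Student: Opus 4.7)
The plan is to handle both permutations in parallel via the substitution $x=a_1a_2$, $y=a_3a_4$. The map $(a_1,a_2,a_3,a_4)\mapsto(x,a_2,y,a_4)$, with $a_1=xa_2^{-1}$ and $a_3=ya_4^{-1}$, is a bijection of $G^4$. Using the identities $a_2a_1=a_2xa_2^{-1}$ and $a_4a_3=a_4ya_4^{-1}$ (which come straight from the proof of Lemma~\ref{commute-conjug}), the equation for $\pi=\langle 2\;1\;4\;3\rangle$ becomes $xy=(a_2xa_2^{-1})(a_4ya_4^{-1})$, while the equation for $\pi=\langle 4\;3\;2\;1\rangle$ becomes $xy=(a_4ya_4^{-1})(a_2xa_2^{-1})$. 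Substituting $b_1=a_2^{-1}$, $b_2=a_4^{-1}$ rewrites the first as $b_1^{-1}xb_1\cdot b_2^{-1}yb_2=xy$, i.e.\ precisely the defining condition of $Stab.Prod_2(x,y)$; summing over $(x,y)\in G^2$ and dividing by $|G|^4$ gives the $Stab.Prod_2$ formula for $Pr_{\langle 2\;1\;4\;3\rangle}(G)$ immediately.

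I would next refine both counts into sums over triples of conjugacy classes. Fix $x\in\Omega_i$ and $y\in\Omega_k$, and let $\Omega_j$ be the class of $xy$. As $a_2$ ranges over $G$, the conjugate $a_2xa_2^{-1}$ traverses $\Omega_i$ with uniform multiplicity $|C_G(x)|=|G|/|\Omega_i|$, and similarly for $a_4$; so the number of valid $(a_2,a_4)$ equals $|C_G(x)||C_G(y)|$ times the number of $(x',y')$ in the prescribed class product satisfying the required product equation. For $\pi=\langle 2\;1\;4\;3\rangle$ that inner count is $c_{i,k;j}(G)$, and for $\pi=\langle 4\;3\;2\;1\rangle$ it is $c_{k,i;j}(G)$. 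Summing further over $(x,y)\in\Omega_i\times\Omega_k$ with $xy\in\Omega_j$ contributes an extra factor $|\Omega_j|\,c_{i,k;j}(G)$, since every $z\in\Omega_j$ admits exactly $c_{i,k;j}(G)$ factorizations from $\Omega_i\times\Omega_k$. Collecting the contributions and dividing by $|G|^4$ delivers the two displayed conjugacy-class sums, one with $c_{i,k;j}^2$ and the other with $c_{i,k;j}c_{k,i;j}$.

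The heart of the argument, and in my view the main obstacle, is then showing that the two sums coincide, i.e.\ that $c_{i,k;j}(G)=c_{k,i;j}(G)$ for all $i,k,j$. I would settle this by exhibiting the explicit bijection $(x_1,x_2)\mapsto(x_1x_2x_1^{-1},\,x_1)$ between $(i,k)$- and $(k,i)$-factorizations of any fixed $y\in\Omega_j$; its inverse is $(u,v)\mapsto(v,v^{-1}uv)$. Both maps preserve the target product $y$ and swap the prescribed conjugacy classes, very much in the spirit of Lemma~\ref{commute-conjug}. Without this symmetry the two permutation equations would not obviously collapse to a common value, so this tiny bijection is what drives the whole theorem. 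Once it is in place, the full chain of equalities closes, with the remaining identification $Pr^4(G)=Pr_{\langle 4\;3\;2\;1\rangle}(G)$ being true by definition.
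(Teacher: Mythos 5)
Your proof is correct, and every link in the chain of equalities is accounted for. The first half --- the change of variables $x=a_{1}a_{2}$, $y=a_{3}a_{4}$ identifying $Pr_{\langle2\;\;1\;\;4\;\;3\rangle}(G)$ with $|G|^{-4}\sum_{x,y}|Stab.Prod_{2}(x,y)|$, and the refinement of both counts into sums over triples of conjugacy classes with the factors $|\Omega_{j}|\,c_{i,k;j}(G)$ and $|C_{G}(x)|\cdot|C_{G}(y)|$ --- is essentially the computation the paper performs, though your change-of-variables phrasing is tidier than the paper's ``alternative description'' of $Stab.Prod_{2}(x,y)$ as the pairs $(g,f^{-1})$ with $gxyf=xgfy$. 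Where you genuinely diverge is in how the two permutations are tied together. The paper never states or uses the symmetry $c_{i,k;j}(G)=c_{k,i;j}(G)$; instead it directly counts the solutions of $a_{1}a_{2}a_{3}a_{4}=a_{4}a_{3}a_{2}a_{1}$ by reducing each equation $xy=y^{\prime}x^{\prime}$ (with $x^{\prime}\sim x$, $y^{\prime}\sim y$) to the auxiliary element $x^{\prime\prime}=y^{-1}xy$ and the set $Stab.Prod_{2}(y,y^{-1}xy)$, and then re-indexes the sum over $(x,y)$ to recover $\sum_{u,v}|Stab.Prod_{2}(u,v)|$; the equality of the two conjugacy-class sums then falls out as a corollary rather than being an input. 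Your route inverts this: you prove $c_{i,k;j}(G)=c_{k,i;j}(G)$ by the explicit bijection $(x_{1},x_{2})\mapsto(x_{1}x_{2}x_{1}^{-1},x_{1})$ --- a refinement of Lemma \ref{commute-conjug} to a count-preserving correspondence --- and deduce $Pr_{\langle2\;\;1\;\;4\;\;3\rangle}(G)=Pr_{\langle4\;\;3\;\;2\;\;1\rangle}(G)=Pr^{4}(G)$ from the coincidence of the two class sums. What your version buys is a cleaner and more self-contained argument: the paper's handling of $Stab.Prod_{2}(y,y^{-1}xy)$, together with its claim that two such equations coincide if and only if $h\in C_{G}(y)$ and $g\in C_{G}(x^{\prime\prime})$, is the most delicate point of its proof and you bypass it entirely. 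The cost is that the innocuous-looking identity $c_{i,k;j}=c_{k,i;j}$, which the paper obtains only implicitly, must carry the whole weight --- but your two-line bijection settles it, so nothing is missing.
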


\begin{proof}
Notice, that for any $x,y\in G$, the set of all ordered pairs $(g,h)$ of
elements of $G$, such that $xy=g^{-1}xgh^{-1}yh$, which was denoted by
$Stab.Prod_{2}(x,y)$, has an alternative description as the set of all the
ordered pairs $(g\in G,f^{-1}\in G)$, such that $gxyf=xgfy$.

Thus, for each ordered pair $(x,y)$ of elements of $G$, $|Stab.Prod_{2}(x,y)|$
is the number of different equations $gxyf=xgfy$, where $g,h\in G$.

As $x$ and $y$ run over all elements of $G$, we obtain that the total number
of different equations $gxyf=xgfy$ in $G$ is $\sum\limits_{x,y\in
G}|Stab.Prod_{2}(x,y)|$. Consequently,%
\[
Pr(a_{1}a_{2}a_{3}a_{4}=a_{2}a_{1}a_{4}a_{3})=\frac{\sum\limits_{x,y\in
G}|Stab.Prod_{2}(x,y)|}{|G|^{4}}.
\]

Next, let us consider a generic equation $a_{1}a_{2}a_{3}a_{4}=a_{4}a_{3}%
a_{2}a_{1}$. Denote the product $a_{1}a_{2}$ by $x$, the product $a_{2}a_{1}$
by $x^{\prime}$, the product $a_{3}a_{4}$ by $y$ and the product $a_{4}a_{3}$
by $y^{\prime}$. The equation $a_{1}a_{2}a_{3}a_{4}=a_{4}a_{3}a_{2}a_{1}$
becomes $xy=y^{\prime}x^{\prime}$. Notice, that by Lemma \ref{commute-conjug},
we have $x\sim x^{\prime}$ and $y\sim y^{\prime}$. Now consider any
$x,x^{\prime},y,y^{\prime}\in G$, such that $x\sim x\prime$ and $y\sim
y^{\prime}$ and that $xy=y^{\prime}x^{\prime}$. Then, by Lemma
\ref{conjug-decomp}, there are $\frac{|G|}{|\Omega_{(}x)|}=|C_{G}(x)|$
different ways of breaking $x$ into a product $a_{1}a_{2}$ in such a way, that
$x^{\prime}=a_{2}a_{1}$. Also, by Lemma \ref{conjug-decomp}, there are
$\frac{|G|}{|\Omega_{(}y)|}=|C_{G}(y)|$ different ways of breaking $y$ into a
product $a_{3}a_{4}$ in such a way, that $y^{\prime}=a_{4}a_{3}$. Thus, to
each fixed equation $xy=y^{\prime}x^{\prime}$ correspond $\left\vert
C_{G}(x)\right\vert \cdot\left\vert C_{G}(y)\right\vert $ different equations
$a_{1}a_{2}a_{3}a_{4}=a_{4}a_{3}a_{2}a_{1}$.

Notice, that for any fixed $x,y\in G$, we can take $x^{\prime\prime}%
=(y^{-1}xy)$ and obtain an equation $xy=y(y^{-1}xy)=yx^{\prime\prime}$. Now,
for any equation $xy=y^{\prime}x^{\prime}$ as above, there exist some $g,h\in
G$ such that $y^{\prime}=hyh^{-1}$ and $x^{\prime}=gx^{\prime\prime}g^{-1}$.
Hence, if we select and fix $x$ and $y$, we will have
\[
\frac{|Stab.Prod_{2}(y,x^{\prime\prime})|}{\left\vert C_{G}(y)\right\vert
\cdot\left\vert C_{G}(x^{\prime\prime})\right\vert }%
\]
different equations $xy=y^{\prime}x^{\prime}$, in which $x^{\prime}\sim x$ and
$y^{\prime}\sim y$, and any two such equations are equal if and only if $h\in
C_{G}(y)$ and $g\in C_{G}(x^{\prime\prime})$. Notice, that $\left\vert
C_{G}(x^{\prime\prime})\right\vert =|C_{G}(x)|$. Thus, for each fixed ordered
pair $(x,y)$ of elements of $G$ we have
\[
\frac{|Stab.Prod_{2}(y,(y^{-1}xy))|}{\left\vert C_{G}(y)\right\vert
\cdot\left\vert C_{G}(x)\right\vert }%
\]
different equations $xy=y^{\prime}x^{\prime}$. To each one of these equations
correspond $\left\vert C_{G}(x)\right\vert \cdot\left\vert C_{G}(y)\right\vert
$ different equations $a_{1}a_{2}a_{3}a_{4}=a_{4}a_{3}a_{2}a_{1}$, as shown
above. Thus, to each ordered pair $x$ and $y$ of elements of $G$ correspond
$|Stab.Prod_{2}(y,(y^{-1}xy))|$ different equations $a_{1}a_{2}a_{3}%
a_{4}=a_{4}a_{3}a_{2}a_{1}$.

Thus to find $Pr(a_{1}a_{2}a_{3}a_{4}=a_{4}a_{3}a_{2}a_{1})$ we need to sum
$|Stab.Prod_{2}(y,(y^{-1}xy))|$ over all $x,y\in G$. Since $x$ and $y$ run
through all the elements of $G$, this is the same as to sum over all $u,v\in
G$ the value $|Stab.Prod_{2}(u,v)|$. Thus, we obtain
\[
L_{\langle4\;\;3\;\;2\;\;1\rangle}(G)=\sum\limits_{x,y\in G}\left\vert
Stab.Prod_{2}(x,y)\right\vert
\]
and
\begin{gather*}
Pr^{4}(G)=Pr(a_{1}a_{2}a_{3}a_{4}=a_{4}a_{3}a_{2}a_{1})=\\
=\frac{\sum\limits_{x,y\in G}|Stab.Prod_{2}(x,y)|}{|G|^{4}}=Pr(a_{1}a_{2}%
a_{3}a_{4}=a_{2}a_{1}a_{4}a_{3}).
\end{gather*}

Now, select and fix an element $z$ in some equivalence class $\Omega_{j}$ of
$G$. For each (fixed) equation $xy=z=x^{\prime}y^{\prime}$ we have exactly
$\left\vert C_{G}(x)\right\vert \cdot|C_{G}(y)|$ different equations
\[
(a_{1}a_{2})(a_{3}a_{4})=xy=z=x^{\prime}y^{\prime}=(a_{2}a_{1})(a_{4}a_{3}).
\]

There are $|C_{G}(x)|$ different ways to break $x$ into a product $a_{1}a_{2}$
in such a way that $a_{2}a_{1}=x^{\prime}$. Similarly, There are $|C_{G}(y)|$
different ways to break $y$ into a product $a_{3}a_{4}$ in such a way that
$a_{4}a_{3}=y^{\prime}$. It is satisfied:
\[
\left\vert C_{G}(x)\right\vert \cdot\left\vert C_{G}(y)\right\vert
=\frac{|G|^{2}}{\left\vert \Omega(x)\right\vert \cdot\left\vert \Omega
(y)\right\vert }.
\]
There are $c_{i,k;j}(G)$ different ways to break $z$ as $xy$ and
$c_{i,k;j}(G)$ different ways to break $z$ as $x^{\prime}y^{\prime}$. Thus,
for each pair $\Omega_{i},\Omega_{k}$ of conjugacy classes of $G$, there are
$c_{i,k;j}^{2}(G)$ different equations $xy=z=x^{\prime}y^{\prime}$ with
$x,x^{\prime}\in\Omega_{i}$ and $y,y^{\prime}\in\Omega_{k}$. Thus we obtain%
\[
Pr(a_{1}a_{2}a_{3}a_{4}=a_{2}a_{1}a_{4}a_{3})=\sum\limits_{i,k,j=1}%
^{c(G)}\frac{\left\vert \Omega_{j}\right\vert \cdot c_{i,k;j}^{2}%
(G)}{\left\vert \Omega_{i}\right\vert \cdot\left\vert \Omega_{k}\right\vert
\cdot\left\vert G\right\vert ^{2}}.
\]

Finally, select and fix an element $z$ in some equivalence class $\Omega_{j}$
of $G$. For each (fixed) equation $xy=z=y^{\prime}x^{\prime}$ we have exactly
$\left\vert C_{G}(x)\right\vert \cdot\left\vert C_{G}(y)\right\vert $
different equations
\[
(a_{1}a_{2})(a_{3}a_{4})=xy=z=y^{\prime}x^{\prime}=(a_{4}a_{3})(a_{2}a_{1}).
\]
and
\[
\left\vert C_{G}(x)\right\vert \cdot\left\vert C_{G}(y)\right\vert
=\frac{|G|^{2}}{\left\vert \Omega(x)\right\vert \cdot\left\vert \Omega
(y)\right\vert }.
\]

Now, for each pair $\Omega_{i},\Omega_{k}$ of conjugacy classes of $G$, there
are $c_{i,k;j}(G)\cdot c_{k,i;j}(G)$ different equations $xy=z=y^{\prime
}x^{\prime}$ with $x,x^{\prime}\in\Omega_{i}$ and $y,y^{\prime}\in\Omega_{k}$.
Thus we obtain%
\[
Pr(a_{1}a_{2}a_{3}a_{4}=a_{4}a_{3}a_{2}a_{1})=\sum\limits_{i,k,j=1}%
^{c(G)}\frac{\left\vert \Omega_{j}\right\vert \cdot c_{i,k;j}(G)\cdot
c_{k,i;j}(G)}{\left\vert \Omega_{i}\right\vert \cdot\left\vert \Omega
_{k}\right\vert \cdot\left\vert G\right\vert ^{2}}.
\]

\end{proof}

\begin{theorem}
The following are valid:

\begin{itemize}
\item $Pr(a_{1}a_{2}a_{3}a_{4}=a_{3}a_{2}a_{4}a_{1})=Pr^{4}(G)$;

\item $Pr(a_{1}a_{2}a_{3}a_{4}=a_{4}a_{2}a_{1}a_{3})=Pr^{4}(G)$;

\item $Pr(a_{1}a_{2}a_{3}a_{4}=a_{2}a_{4}a_{3}a_{1})=Pr^{4}(G)$;

\item $Pr(a_{1}a_{2}a_{3}a_{4}=a_{4}a_{1}a_{3}a_{2})=Pr^{4}(G)$.
\end{itemize}
\end{theorem}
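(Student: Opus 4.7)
The plan is to exhibit, for each of the four permutations, an explicit bijection of $G^{4}$ that turns its permutation equation into the equation for $\langle 4\;\;3\;\;2\;\;1\rangle$, whose probability is $Pr^{4}(G)$ by Theorem~\ref{mainfour}. Since such bijections preserve the number of solutions $L_{\pi}(G)$, each of the four probabilities will equal $Pr^{4}(G)$.

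First I would reduce $\pi=\langle 3\;\;2\;\;4\;\;1\rangle$ and $\pi=\langle 2\;\;4\;\;3\;\;1\rangle$ by two successive substitutions. In the equation $a_{1}a_{2}a_{3}a_{4}=a_{3}a_{2}a_{4}a_{1}$, substitute $a_{1}=a_{3}c$; the map $(a_{1},a_{2},a_{3},a_{4})\leftrightarrow(c,a_{2},a_{3},a_{4})$ is a bijection of $G^{4}$ because for each fixed $a_{3}$ the map $a_{1}\mapsto a_{3}^{-1}a_{1}$ is a bijection of $G$. Plugging in and cancelling the leftmost $a_{3}$ on both sides, then renaming $c$ as $a_{1}$, yields $a_{1}a_{2}a_{3}a_{4}=a_{2}a_{4}a_{3}a_{1}$, the equation for $\langle 2\;\;4\;\;3\;\;1\rangle$. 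Applying the same idea to this new equation with $a_{1}=a_{2}c$ and cancelling the leftmost $a_{2}$ produces $a_{1}a_{2}a_{3}a_{4}=a_{4}a_{3}a_{2}a_{1}$, the equation for $\langle 4\;\;3\;\;2\;\;1\rangle$. By Theorem~\ref{mainfour} this yields $Pr_{\langle 3,2,4,1\rangle}(G)=Pr_{\langle 2,4,3,1\rangle}(G)=Pr^{4}(G)$.

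Next I would dispose of $\pi=\langle 4\;\;2\;\;1\;\;3\rangle$ and $\pi=\langle 4\;\;1\;\;3\;\;2\rangle$ via the ``inversion--reversal'' bijection $(a_{1},a_{2},a_{3},a_{4})\mapsto(b_{1},b_{2},b_{3},b_{4}):=(a_{4}^{-1},a_{3}^{-1},a_{2}^{-1},a_{1}^{-1})$, which is clearly a bijection of $G^{4}$. Taking the inverse of both sides of $a_{1}a_{2}a_{3}a_{4}=a_{\pi_{1}}a_{\pi_{2}}a_{\pi_{3}}a_{\pi_{4}}$ and rewriting in the $b_{i}$ transforms the $\pi$--equation into the $\pi'$--equation with $\pi'_{j}=5-\pi_{5-j}$. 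A direct check gives $\langle 4\;\;2\;\;1\;\;3\rangle\mapsto\langle 2\;\;4\;\;3\;\;1\rangle$ and $\langle 4\;\;1\;\;3\;\;2\rangle\mapsto\langle 3\;\;2\;\;4\;\;1\rangle$, so both remaining probabilities also equal $Pr^{4}(G)$ by the previous paragraph.

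The main obstacle is choosing the right substitutions. The trick $a_{1}=a_{k}c$ converts the $\pi$--equation into another permutation equation only when $a_{k}$ appears as the first letter of the right-hand side and $a_{1}$ appears as its last letter, so that the common $a_{k}$ cancels and the residual $c$ lands at the tail of the right-hand side. This forces the chain $\langle 3,2,4,1\rangle\to\langle 2,4,3,1\rangle\to\langle 4,3,2,1\rangle$ but does not directly reach $\langle 4,2,1,3\rangle$ or $\langle 4,1,3,2\rangle$, which is why the symmetry argument of the third paragraph is needed. Once the substitutions are chosen, the remaining calculations are routine cancellations.
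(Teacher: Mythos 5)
Your proof is correct. Each of your three moves is a genuine bijection of $G^{4}$ carrying the solution set of one permutation equation onto that of another: the substitution $a_{1}=a_{3}c$ does turn $a_{1}a_{2}a_{3}a_{4}=a_{3}a_{2}a_{4}a_{1}$ into $ca_{2}a_{3}a_{4}=a_{2}a_{4}a_{3}c$, the substitution $a_{1}=a_{2}c$ then lands on $\langle4\;\;3\;\;2\;\;1\rangle$, and the inversion--reversal map $b_{j}=a_{5-j}^{-1}$ sends $\pi$ to $\pi'$ with $\pi'_{j}=5-\pi_{5-j}$, which indeed takes $\langle4\;\;2\;\;1\;\;3\rangle$ to $\langle2\;\;4\;\;3\;\;1\rangle$ and $\langle4\;\;1\;\;3\;\;2\rangle$ to $\langle3\;\;2\;\;4\;\;1\rangle$. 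Your route differs from the paper's in execution: the paper first massages both $Pr^{4}(G)$ and $Pr(a_{1}a_{2}a_{3}a_{4}=a_{3}a_{2}a_{4}a_{1})$ into a common normal form $Pr(t^{-1}s^{-1}xts=u^{-1}xu)$ by conjugating both sides and regrouping variables, then matches the two expressions by a change of variables, and dismisses the remaining two permutations ``by symmetry'' without writing the symmetry down. You instead chain the permutations directly to $\langle4\;\;3\;\;2\;\;1\rangle$ by cancelling a common leading factor --- which is in effect a hands-on instance of the $x$--$y$ exchange machinery the paper only develops later in Theorem \ref{exchange} --- and you make the paper's implicit symmetry explicit as the involution $\pi\mapsto\pi'$, $\pi'_{j}=5-\pi_{5-j}$. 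What your approach buys is transparency and self-containment (no intermediate conjugation form, and the symmetry is verifiable by inspection); what the paper's form buys is a template that it reuses verbatim for the $(1,4,3)(2)$ case. You also correctly identify the limitation of the cancellation trick (it needs $a_{k}$ first and $a_{1}$ last on the right-hand side), which is exactly why the symmetry step is not optional.
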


\begin{proof}%
\begin{gather*}
Pr^{4}(G)=Pr(a_{1}a_{2}a_{3}a_{4}=a_{4}a_{3}a_{2}a_{1})=\\
Pr(a_{4}{}^{-1}a_{1}a_{2}a_{3}a_{4}a_{1}^{-1}=a_{2}{}^{-1}a_{2}a_{3}%
a_{2})=\newline Pr(t^{-1}s^{-1}xts=u^{-1}xu),
\end{gather*}
where $t=a_{4}$, $s=a_{1}{}^{-1}$, $x=a_{2}a_{3}$, and $u=a_{2}$. Now,
\begin{gather*}
Pr(a_{1}a_{2}a_{3}a_{4}=a_{3}a_{2}a_{4}a_{1})=Pr(a_{2}a_{3}a_{4}=a_{1}{}%
^{-1}a_{3}a_{2}a_{4}a_{1})=\\
Pr(a_{4}{}^{-1}a_{2}a_{3}a_{4}=a_{4}{}^{-1}a_{1}{}^{-1}a_{3}a_{2}a_{4}%
a_{1})=\\
Pr\left(  a_{4}{}^{-1}a_{3}{}^{-1}a_{3}a_{2}a_{3}a_{4}=a_{4}{}^{-1}a_{1}%
{}^{-1}a_{3}a_{2}a_{4}a_{1}\right)  =Pr(\left(  t^{\prime}\right)
^{-1}\left(  s^{\prime}\right)  ^{-1}x^{\prime}t^{\prime}s^{\prime}=\left(
u^{\prime}\right)  ^{-1}x^{\prime}u^{\prime}),
\end{gather*}
where $t^{\prime}=a_{4}^{\prime}$, $s^{\prime}=a_{1}$, $x^{\prime}=a_{3}a_{2}%
$, $u^{\prime}=a_{3}a_{4}$. Again, as $a_{2}$, $a_{3}$ and $a_{4}$ run through
all elements of $G$, $t^{\prime}$, $x^{\prime}$ and $u^{\prime}$ also run
through all elements of $G$ and there is a one-to-one correspondence between
choosing $a_{1}$, $a_{2}$ and $a_{3}$ and choosing $t^{\prime}$, $x^{\prime}$
and $u^{\prime}$. Thus, we see that
\[
Pr(a_{1}a_{2}a_{3}a_{4}=a_{3}a_{2}a_{4}a_{1})=Pr^{4}(G).
\]
Notice, that $Pr(a_{1}a_{2}a_{3}a_{4}=a_{3}a_{2}a_{4}a_{1})$ corresponds to
the permutation $\langle3\;\;2\;\;4\;\;1\rangle$, which, in the cyclic
notation, is $(1,3,4)(2)$. The other three permutations, listed in this
Theorem, are $(1,4,3)(2)$, $(1,2,4)(3)$, and $(1,4,2)(3)$. The result for
$(1,2,4)(3)$ follows by symmetry. The result for $(1,4,3)(2)$ is easily proved
in a way, similar to our proof for $(1,3,4)(2)$. Namely,
\begin{gather*}
Pr(a_{1}a_{2}a_{3}a_{4}=a_{4}a_{2}a_{1}a_{3})=Pr(a_{4}^{-1}a_{1}a_{2}%
a_{3}a_{4}=a_{2}a_{1})=\\
Pr(a_{3}{}^{-1}a_{4}{}^{-1}a_{1}a_{2}a_{3}a_{4}=a_{3}{}^{-1}a_{2}a_{1}%
a_{3})=\\
Pr\left(  a_{3}{}^{-1}a_{4}{}^{-1}a_{1}a_{2}a_{3}a_{4}=a_{3}{}^{-1}a_{1}%
{}^{-1}a_{1}a_{2}a_{1}a_{3}\right)  =Pr(\left(  t^{\prime}\right)
^{-1}\left(  s^{\prime}\right)  ^{-1}x^{\prime}t^{\prime}s^{\prime}=\left(
u^{\prime}\right)  ^{-1}x^{\prime}u^{\prime}),
\end{gather*}
where $t^{\prime}=a_{3}^{\prime}$, $s^{\prime}=a_{4}$, $x^{\prime}=a_{1}a_{2}%
$, $u^{\prime}=a_{1}a_{3}$. The result for $(1,4,2)(3)$ now follows by symmetry.
\end{proof}

\begin{theorem}%
\[
Pr(a_{1}a_{2}a_{3}a_{4}=a_{2}a_{4}a_{1}a_{3})=Pr^{4}(G)
\]

and
\[
Pr(a_{1}a_{2}a_{3}a_{4}=a_{3}a_{1}a_{4}a_{2})=Pr^{4}(G).
\]

\end{theorem}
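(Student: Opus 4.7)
My plan is to handle the two equations by two different reductions, both ultimately exploiting the counting identity $L_{\langle4\;\;3\;\;2\;\;1\rangle}(G)=\sum_{x,y\in G}|Stab.Prod_2(x,y)|$ from Theorem~\ref{mainfour}.

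For the first equation $a_1a_2a_3a_4=a_2a_4a_1a_3$, I will use the substitution $x=a_1,\ y=a_3,\ u=a_2a_4,\ t=a_4$, which is a bijection $G^4\to G^4$ with inverse $a_1=x,\ a_2=ut^{-1},\ a_3=y,\ a_4=t$. Substituting, the left-hand side becomes $x(ut^{-1})yt=xut^{-1}yt$ and the right-hand side becomes $(ut^{-1})txy=uxy$, so the equation transforms into $xut^{-1}yt=uxy$, or equivalently $u^{-1}xu\cdot t^{-1}yt=xy$. This is precisely the defining condition for $(u,t)\in Stab.Prod_2(x,y)$. Summing over all $x,y\in G$ therefore yields
\[
L_{\langle2\;\;4\;\;1\;\;3\rangle}(G)=\sum_{x,y\in G}|Stab.Prod_2(x,y)|=L_{\langle4\;\;3\;\;2\;\;1\rangle}(G),
\]
and dividing by $|G|^4$ gives $Pr(a_1a_2a_3a_4=a_2a_4a_1a_3)=Pr^4(G)$.

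For the second equation $a_1a_2a_3a_4=a_3a_1a_4a_2$, I will invoke the general symmetry $Pr_\pi(G)=Pr_{\pi^{-1}}(G)$ for every $\pi\in S_n$. To establish this, swap the two sides of the defining equation for $\pi$ and apply the bijection $b_i:=a_{\pi(i)}$ of $G^n$ onto itself; the swapped equation becomes $b_1\cdots b_n=b_{\pi^{-1}(1)}\cdots b_{\pi^{-1}(n)}$, which is the equation for $\pi^{-1}$. Since $\langle3\;\;1\;\;4\;\;2\rangle=(1,3,4,2)$ is the inverse of $\langle2\;\;4\;\;1\;\;3\rangle=(1,2,4,3)$ in $S_4$, the second identity follows at once from the first.

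The main obstacle will be guessing the right substitution in the first step; once it is written down, the verification is routine word algebra. A direct substitution can also be crafted for the second equation (for instance $x=a_4,\ y=a_2,\ u=a_3a_1,\ t=a_1$), but its relations are less transparent than the inverse-permutation shortcut, which is why I prefer to invoke that symmetry.
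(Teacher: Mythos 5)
Your proposal is correct, but it follows a genuinely different route from the paper's. The paper proves both identities by explicit changes of variables that turn each equation directly into the reversal equation $t_{1}t_{2}t_{3}t_{4}=t_{4}t_{3}t_{2}t_{1}$, i.e.\ into the permutation equality defining $Pr^{4}(G)$ (for the first identity via $t_{1}=a_{2}^{-1}a_{1}$, $t_{2}=a_{2}$, $t_{3}=a_{3}a_{4}$, $t_{4}=a_{3}^{-1}$, after rewriting the equation as $a_{2}^{-1}a_{1}a_{2}a_{3}a_{4}a_{3}^{-1}=a_{4}a_{1}$, and for the second via an analogous substitution). You instead transform the first equation into the defining condition $u^{-1}xu\cdot t^{-1}yt=xy$ for $(u,t)\in Stab.Prod_{2}(x,y)$ and then invoke the identity $Pr^{4}(G)=\sum_{x,y\in G}|Stab.Prod_{2}(x,y)|/|G|^{4}$ from Theorem \ref{mainfour}; your bijection $(a_{1},a_{2},a_{3},a_{4})\mapsto(x,y,u,t)=(a_{1},a_{3},a_{2}a_{4},a_{4})$ and the resulting count both check out. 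For the second identity you prove and apply the general symmetry $Pr_{\pi}(G)=Pr_{\pi^{-1}}(G)$, noting that $\langle3\;\;1\;\;4\;\;2\rangle$ is the inverse of $\langle2\;\;4\;\;1\;\;3\rangle$; the relabeling $b_{i}=a_{\pi(i)}$ is indeed a bijection of $G^{n}$ carrying the solution set of the $\pi$-equation onto that of the $\pi^{-1}$-equation, so this step is sound. What each approach buys: the paper's argument is self-contained at the level of word manipulations and never needs the $Stab.Prod_{2}$ formula, while yours makes the link to that formula explicit and, more valuably, isolates the inversion symmetry $Pr_{\pi}(G)=Pr_{\pi^{-1}}(G)$ as a clean general lemma --- a fact the paper never states but which is consistent with (and gives independent evidence for) the later result that $Pr_{\pi}(G)$ depends only on the number of alternating cycles of $Gr(\pi)$.
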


\begin{proof}
First, we claim that%
\[
Pr(a_{1}a_{2}a_{3}a_{4}=a_{2}a_{4}a_{1}a_{3})=Pr(a_{2}^{-1}a_{1}a_{2}%
a_{3}a_{4}a_{3}^{-1}=a_{4}a_{1})=Pr(t_{1}t_{2}t_{3}t_{4}=t_{4}t_{3}t_{2}%
t_{1}),
\]
where $t_{1}=a_{2}^{-1}a_{1}$, $t_{2}=a_{2}$, $t_{3}=a_{3}a_{4}$ and
$t_{4}=a_{3}^{-1}$. Clearly, as $a_{1}$ and $a_{2}$ run through all elements
of $G$, $t_{1}$ and $t_{2}$ also run through all elements of $G$ and there is
a one-to-one correspondence between choosing $a_{1}$ and $a_{2}$ and choosing
$t_{1}$ and $t_{2}$. Similarly, as $a_{3}$ and $a_{4}$ run through all
elements of $G$, $t_{3}$ and $t_{4}$ also run through all elements of $G$ and
there is a one-to-one correspondence between choosing $a_{3}$ and $a_{4}$ and
choosing $t_{3}$ and $t_{4}$.
\[
Pr(a_{1}a_{2}a_{3}a_{4}=a_{3}a_{1}a_{4}a_{2})=Pr(a_{2}a_{3}=a_{1}^{-1}%
a_{3}a_{1}a_{4}a_{2}a_{4}^{-1})=Pr(t_{1}t_{2}t_{3}t_{4}=t_{4}t_{3}t_{2}%
t_{1}),
\]
where $t_{1}=a_{4}^{-1}$, $t_{2}=a_{4}a_{2}$, $t_{3}=a_{1}$ and $t_{4}%
=a^{-1}a_{3}$. Again, as $a_{1}$ and $a_{3}$ run through all elements of $G$,
$t_{1}$ and $t_{3}$ also run through all elements of $G$ and there is a
one-to-one correspondence between choosing $a_{1}$ and $a_{3}$ and choosing
$t_{1}$ and $t_{3}$. Similarly, as $a_{2}$ and $a_{4}$ run through all
elements of $G$, $t_{2}$ and $t_{4}$ also run through all elements of $G$ and
there is a one-to-one correspondence between choosing $a_{2}$ and $a_{4}$ and
choosing $t_{2}$ and $t_{4}$.
\end{proof}

Now we compute $Pr(a_{1}a_{2}a_{3}a_{4}=a_{2}a_{1}a_{4}a_{3})=Pr^{4}(G)$ for
the cases $G=D_{4}$ and $G=Q_{8}$. These computations are made only to
\textquotedblleft justify" our Lemma \ref{SPEC4} below. These results, but in
a much more general form, and for a much larger variety of groups, were
obtained in \cite{DasNath1} and \cite{NathDash1}. No matter $G=D_{4}$ or
$G=Q_{8}$ we get that:

\begin{itemize}
\item $Pr^{4}(G)=\frac{17}{32}$, which is, actually, smaller than
$Pr(a_{1}a_{2}=a_{2}a_{1})=\frac{5}{8}$;

\item the center of the group consists of the identity $1$ and another element
$c$, such that $c^{2}=1$;

\item the factor of the group by its center is Abelian.
\end{itemize}

Consequently, if $xy\neq yx$ then $xy=cyx$. Now, $a_{1}a_{2}a_{3}a_{4}%
=a_{2}a_{1}a_{4}a_{3}$ either if $a_{1}a_{2}=a_{2}a_{1}$ and $a_{3}a_{4}%
=a_{4}a_{3}$, or if $a_{1}a_{2}=ca_{2}a_{1}$ and $a_{3}a_{4}=ca_{4}a_{3}$. We
conclude with the claim that%

\[
Pr(a_{1}a_{2}a_{3}a_{4}=a_{2}a_{1}a_{4}a_{3})=Pr^{4}(G)=\frac{5}{8}\cdot
\frac{5}{8}+\frac{3}{8}\cdot\frac{3}{8}=\frac{17}{32}%
\]
for for either $D_{4}$ or $Q_{8}$.

\begin{lemma}
\label{SPEC4} $Spec_{4}(G)=\{1,\frac{|c(G)|}{|G|}=Pr^{2}(G),Pr_{\langle
4\;\;3\;\;2\;\;1\;\;\rangle}(G)=Pr^{4}(G)\}$
\end{lemma}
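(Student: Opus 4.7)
The plan is to prove the claim by a direct case analysis over the $24$ elements of $S_4$, assembling the lemmas and theorems already established in this section; no new machinery is needed, only bookkeeping. The identity permutation contributes the value $1$ to the spectrum trivially. The two preceding lemmas show that any non-identity $\pi\in S_4$ with $\pi_1=1$, or $\pi_4=4$, or with some consecutive pair $\pi_{i+1}=\pi_i+1$ ($1\le i\le 3$), satisfies $Pr_\pi(G)=c(G)/|G|$. A straightforward enumeration of $S_4$ gives exactly $15$ such non-identity permutations: the five with $\pi_1=1$, the four additional ones with $\pi_4=4$, and the six further permutations $\langle 2\;3\;4\;1\rangle$, $\langle 3\;4\;1\;2\rangle$, $\langle 3\;4\;2\;1\rangle$, $\langle 4\;1\;2\;3\rangle$, $\langle 4\;2\;3\;1\rangle$, $\langle 4\;3\;1\;2\rangle$ captured by the consecutive-pair criterion.

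Next I would check that the remaining $24-1-15=8$ permutations are precisely those handled by Theorem \ref{mainfour} and the two theorems following it. Explicitly, Theorem \ref{mainfour} covers $\langle 2\;1\;4\;3\rangle$ and $\langle 4\;3\;2\;1\rangle$; the following theorem covers $\langle 3\;2\;4\;1\rangle$, $\langle 4\;2\;1\;3\rangle$, $\langle 2\;4\;3\;1\rangle$, $\langle 4\;1\;3\;2\rangle$; and the final theorem covers $\langle 2\;4\;1\;3\rangle$ and $\langle 3\;1\;4\;2\rangle$. Each of these is shown there to evaluate to $Pr^4(G)$. Adding up, $1+15+8=24=|S_4|$, so every permutation has been accounted for, and no value other than $1$, $c(G)/|G|$, or $Pr^4(G)$ can occur in $Spec_4(G)$.

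The main obstacle is purely combinatorial: one must be sure the enumeration is exhaustive and that the three classes are pairwise disjoint. Because $|S_4|$ is small, I would simply tabulate all $24$ permutations and mark, for each one, which criterion places it into the $c(G)/|G|$ class or the $Pr^4(G)$ class; the identity is then the unique leftover. As a sanity check, one notices that these three classes correspond exactly to the permutations of $S_4$ whose cycle graph has $5$, $3$, and $1$ alternating cycles respectively, which is consistent with the broader claim that $Pr_\pi(G)$ depends only on $c(Gr(\pi))$.
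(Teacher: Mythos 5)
Your proposal is correct and follows essentially the same route as the paper, which likewise just assembles the preceding lemmas and theorems of the section; your explicit $1+15+8=24$ tally actually makes the exhaustiveness check more transparent than the paper's one-line appeal to "the lemmas and theorems of this section." The only thing the paper adds is the observation that the computations for $D_{4}$ and $Q_{8}$ show the three values can be pairwise distinct, but that is not needed for the set equality as stated.
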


\begin{proof}
The lemmas and theorems of this section established, that for any permutation
from $S_{4}$, the corresponding probability is either $1$ or $\frac
{|c(G)|}{|G|}$ or $Pr^{4}(G)$.

The calculations for $G=D_{4}$ and $G=Q_{8}$ show, that these three numbers
are pairwise different for these two groups.
\end{proof}

Notice, that for any two permutations from $S_{2}$, $S_{3}$ or $S_{4}$, their
permutational equalities have the same probability if and only if these
permutations have the same number of alternating cycles in their cycle graphs.
This fact was established above for permutations from $S_{3}$ and $S_{4}$, and
is trivially verified for permutations from $S_{2}$. Consequently, the number
of different permutations, corresponding to each probability in $Spec_{2}(G)$,
$Spec_{3}(G)$ or $Spec_{4}(G)$, is exactly the Hultman number $S_{H}(n,k)$,
where $n=2,3,4$ and $k$ is the number of alternating cycles in the cycle
graphs of these permutations. Now we address the general case.

\section{Probabilities of permutation equalities, number of alternating cycles
and Hultman decomposition}

Again, we refer to \cite{DL} for all the relevant information on cycle graphs,
the Hultman decomposition, and Hultman numbers.

\begin{theorem}
\label{exchange} Let $\phi$ and $\theta$ be two permutations in $S_{n}$, such
that $\theta$ is obtained from $\phi$ by an $x--y$ exchange operation. Then
\[
Pr(a_{1}a_{2}\cdots a_{n}=a_{\phi_{1}}a_{\phi_{2}}\cdots a_{\phi_{n}%
})=Pr(a_{1}a_{2}\cdots a_{n}=a_{\theta_{1}}a_{\theta_{2}}\cdots a_{\theta_{n}%
}).
\]

\end{theorem}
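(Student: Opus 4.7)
My plan is to prove $L_\phi(G) = L_\theta(G)$ directly by exhibiting a bijection between the solution sets of the two equations in $G^n$. I focus on the generic case of Definition~\ref{def.exchange} where $y = \phi_i$, $x = \phi_j$, $x+1 = \phi_{i+1}$ and $i+1 < j$; the remaining cases ($j < i$, and the cyclic case $y = 0$) follow by the same argument, the last after a cyclic relabeling of the indices $1, \ldots, n$, which preserves the solution count. Writing
\[
a_1 \cdots a_n \;=\; L_1 \cdot a_y \cdot L_2,
\]
\[
\phi\text{-RHS} \;=\; \alpha \cdot a_y \cdot N \cdot \beta, \qquad \theta\text{-RHS} \;=\; \alpha \cdot N \cdot a_y \cdot \beta,
\]
where $L_1 = a_1 \cdots a_{y-1}$, $L_2 = a_{y+1} \cdots a_n$, $\alpha = a_{\phi_1} \cdots a_{\phi_{i-1}}$, $\beta = a_{\phi_{j+1}} \cdots a_{\phi_n}$, and $N = a_{x+1} \cdot a_{\phi_{i+2}} \cdots a_{\phi_{j-1}} \cdot a_x$ is the block that $a_y$ is moved past by the exchange. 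None of $\alpha$, $N$, $\beta$, $L_1$, $L_2$ involves the variable $a_y$.

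The key algebraic input is Lemma~\ref{commute-conjug}, which supplies $a_y \cdot N \sim N \cdot a_y$, so the two right-hand sides are always conjugate. Combined with Lemma~\ref{conjug-decomp}, this reveals that when all variables except $a_y$ are held fixed, both the $\phi$-equation and the $\theta$-equation reduce to conjugation conditions on $a_y$ of the form $g^{-1} A g = B$, whose ingredients differ only by a twist through the block $N$. My plan is to upgrade this observation into an explicit bijection $\Psi : G^n \to G^n$ sending $\phi$-solutions to $\theta$-solutions: $\Psi$ conjugates $a_y$ by $N$ and simultaneously adjusts the variables $a_x$ and $a_{x+1}$, which sit consecutively at positions $x, x+1$ in the LHS $a_1 \cdots a_n$ — this consecutive placement is precisely the grey edge $x \dashrightarrow x+1$ required to define the exchange — so that after the substitution the LHS is still in the canonical form $a_1 \cdots a_n$.

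The main obstacle will be choosing the compensating substitution on $a_x$ and $a_{x+1}$ (and, if needed, on one of the $a_{\phi_k}$ in $\mu = a_{\phi_{i+2}} \cdots a_{\phi_{j-1}}$) carefully enough that $\Psi$ is a genuine bijection of $G^n$. A bare conjugation $a_y \mapsto N^{-1} a_y N$ corrects the right-hand side but distorts the left, so the compensation must restore the LHS to canonical form while remaining invertible. I expect the verification to split into subcases according to whether $y < x$ or $y > x+1$, which determines whether the pair $a_x a_{x+1}$ sits to the right or to the left of $a_y$ in the LHS product; once $\Psi$ is shown to be bijective, $L_\phi(G) = L_\theta(G)$, and hence $Pr_\phi(G) = Pr_\theta(G)$, follow at once.
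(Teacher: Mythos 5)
Your overall strategy --- exhibiting an explicit bijection of $G^{n}$ carrying solutions of the $\phi$-equation to solutions of the $\theta$-equation --- is exactly the strategy of the paper's proof, and your decomposition of the two right-hand sides as $\alpha\cdot a_{y}\cdot N\cdot\beta$ versus $\alpha\cdot N\cdot a_{y}\cdot\beta$ is the right picture. But the bijection you actually sketch does not work, and the one that does is left unconstructed. A map sending $a_{y}$ to a conjugate $N^{-1}a_{y}N$ perturbs the left-hand side $a_{1}\cdots a_{n}$ at position $y$, and no adjustment of $a_{x}$ and $a_{x+1}$, which sit at positions $x$ and $x+1$ (in general far from position $y$), can cancel a perturbation occurring at position $y$ in a non-abelian product. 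The substitution that works leaves $a_{y}$ untouched and instead conjugates the block $N=a_{x+1}\cdot a_{\phi_{i+2}}\cdots a_{\phi_{j-1}}\cdot a_{x}$ by $a_{y}$: set $b_{x+1}=a_{y}a_{x+1}$, $b_{x}=a_{x}a_{y}^{-1}$, and $b_{t}=a_{t}$ otherwise. Then $b_{x}b_{x+1}=a_{x}a_{x+1}$, so the left-hand side is unchanged, while $b_{x+1}\cdot M\cdot b_{x}\cdot b_{y}=a_{y}a_{x+1}\cdot M\cdot a_{x}=a_{y}\cdot N$ (here $M=a_{\phi_{i+2}}\cdots a_{\phi_{j-1}}$, whose indices avoid $x$, $x+1$, $y$), so the $\theta$-right-hand side evaluated at $b$ equals the $\phi$-right-hand side evaluated at $a$; the map $a\mapsto b$ is visibly invertible. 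This is precisely the change of variables $a_{x+1}^{\prime}=a_{y}a_{x+1}$, $a_{x}^{\prime}=a_{x}a_{y}$ in the paper's proof. Note also that Lemmas \ref{commute-conjug} and \ref{conjug-decomp}, which you name as the key algebraic input, play no role here; they belong to the conjugacy-class counting arguments of Theorem \ref{mainfour}, and invoking them points toward a counting argument that you do not carry out.

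A second, smaller gap: the boundary cases in which the vertex $0$ of the cycle graph is one of $x$, $x+1$, $y$ (that is, $x=0$, $x=n$, or $y=0$) cannot be reduced to the generic case by ``a cyclic relabeling of the indices.'' Cyclically rotating the product $a_{1}\cdots a_{n}$ conjugates it rather than preserving it, and the vertex $0$ carries no variable, so the block structure genuinely changes. The paper treats each of these cases separately; in particular the case $y=0$ requires first inverting the whole equation, via $Pr(arsb=tsvr)=Pr(b^{-1}s^{-1}r^{-1}a^{-1}=r^{-1}v^{-1}s^{-1}t^{-1})$, before a substitution of the above type applies.
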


\begin{proof}
If $x=y$ then $\theta=\phi$ and the Theorem follows. Hence, we assume that
$x\neq y$.

First, we consider the case when $x,x+1,w,y,z$ all are $\neq0$. The
requirement that $z\rightarrow x\dashrightarrow x+1\rightarrow y$ and
$y\rightarrow w$ are present in $Gr(\phi)$ implies that in this case the
product $a_{\phi_{1}}a_{\phi_{2}}\cdots a_{\phi_{n}}$ contains sub-products
$a_{w}a_{y}a_{x+1}$ and $a_{x}a_{z}$. These two sub-products can
\textquotedblleft overlap" if and only if $z=w$.

If the sub-product $a_{w}a_{y}a_{x+1}$ appears before $a_{x}a_{z}$, then
\[
Pr(a_{1}a_{2}\cdots a_{n}=a_{\phi_{1}}a_{\phi_{2}}\cdots a_{\phi_{n}})
\]
can be written as
\begin{equation}
Pr(a_{1}\cdots a_{x}a_{y}^{-1}a_{x+1}^{\prime}a_{x+2}\cdots a_{n}=a_{\phi_{1}%
}\cdots a_{\phi_{r}}a_{w}a_{x+1}^{\prime}a_{\phi_{r+4}}\cdots a_{\phi_{s}%
}a_{x}a_{z}a_{\phi_{s+3}}\cdots a_{\phi_{n}}), \label{Case1a.Eq1}%
\end{equation}
where $a_{x+1}^{\prime}=a_{y}a_{x+1}$. Since both $y\rightarrow
x\dashrightarrow x+1\rightarrow w$ and $z\rightarrow y$ are present in
$Gr(\theta)$,
\[
Pr(a_{1}a_{2}\cdots a_{n}=a_{\theta_{1}}a_{\theta_{2}}\cdots a_{\theta_{n}})
\]
can be written as%
\begin{equation}
Pr(a_{1}\cdots a_{x}^{\prime}a_{y}^{-1}a_{x+1}a_{x+2}\cdots a_{n}%
=a_{\theta_{1}}\cdots a_{\theta_{r}}a_{w}a_{x+1}a_{\theta_{r+3}}\cdots
a_{\theta_{s-1}}a_{x}^{\prime}a_{z}a_{\theta_{s+3}}\cdots a_{\theta_{n}}),
\label{Case1a.Eq2}%
\end{equation}
where $a_{x}^{\prime}=a_{x}a_{y}$. Since $a_{\phi_{i}}=a_{\theta_{i}}$, for
$1\leq i\leq r$ and $s+3\leq i\leq n$, and $a\phi_{i}=a_{\theta_{i-1}}$, for
$r+4\leq i\leq s$, Equation \ref{Case1a.Eq1} and Equation \ref{Case1a.Eq2}
have the same number of ordered $n$-tuples of elements of $G$ as solutions.
Hence, we obtain the statement of our Theorem.

If the sub-product $a_{x}a_{z}$ appears before $a_{w}a_{y}a_{x+1}$, then
\[
Pr(a_{1}a_{2}\cdots a_{n}=a_{\phi_{1}}a_{\phi_{2}}\cdots a_{\phi_{n}})
\]
can be written as
\begin{equation}
Pr(a_{1}\cdots a_{x}a_{y}^{-1}a_{x+1}^{\prime}a_{x+2}\cdots a_{n}=a_{\phi_{1}%
}\cdots a_{\phi_{s}}a_{x}a_{z}a_{\phi_{s+3}}\cdots a_{\phi_{r}}a_{w}%
a_{x+1}^{\prime}a_{\phi_{r+4}}\cdots a_{\phi_{n}}), \label{Case1b.Eq1}%
\end{equation}
where $a_{x+1}^{\prime}=a_{y}a_{x+1}$. Since both $y\rightarrow
x\dashrightarrow x+1\rightarrow w$ and $z\rightarrow y$ are present in
$Gr(\theta)$,
\[
Pr(a_{1}a_{2}\cdots a_{n}=a_{\theta_{1}}a_{\theta_{2}}\cdots a_{\theta_{n}})
\]
can be written as%
\begin{equation}
Pr(a_{1}\cdots a_{x}^{\prime}a_{y}^{-1}a_{x+1}a_{x+2}\cdots a_{n}%
=a_{\theta_{1}}\cdots a_{\theta_{s}}a_{x}^{\prime}a_{z}a_{\theta_{s+4}}\cdots
a_{\theta_{r}}a_{w}a_{x+1}a_{\theta_{r+3}}\cdots a_{\theta_{n}}),
\label{Case1b.Eq2}%
\end{equation}
where $a_{x}^{\prime}=a_{x}a_{y}$. Since $a_{\phi_{i}}=a_{\theta_{i}}$, for
$1\leq i\leq r$ and $s+3\leq i\leq n$, and $a\phi_{i}=a_{\theta_{i-1}}$, for
$r+4\leq i\leq s$, Equation \ref{Case1b.Eq1} and Equation \ref{Case1b.Eq2}
have the same number of ordered $n$-tuples of elements of $G$ as their
solutions. Hence, we obtain the statement of our Theorem.

Second, we consider the case when $x=0$. In this case $x+1=1$, $z=\phi
_{1}=\theta_{1}$ and $y=\theta_{1}$. Now,
\[
Pr_{\phi}(G)=Pr(a_{1}a_{2}\cdots a_{n}=a_{z}a_{\phi_{2}}a_{\phi_{3}}\cdots
a_{\phi_{r}}a_{w}a_{y}a_{1}a_{\phi_{r+4}}\cdots a_{\phi_{n}})
\]
is equivalent to
\[
Pr(a_{y}a_{1}a_{2}\cdots a_{z-1}a_{y}{}^{-1}a_{y}a_{z}a_{z+1}\cdots
a_{n}=a_{y}a_{z}a_{\phi_{2}}a_{\phi_{3}}\cdots a_{\phi_{r}}a_{w}a_{y}%
a_{1}a_{\phi_{r+4}}\cdots a_{\phi_{n}}),
\]
which can be written as%
\begin{equation}
Pr(a_{1}^{\prime}a_{2}\cdots a_{n}=a_{y}a_{z}a_{\phi_{2}}a_{\phi_{3}}\cdots
a_{\phi_{r}}a_{w}a_{1}^{\prime}a_{\phi_{r+4}}\cdots a_{\phi_{n}}),
\label{Case2.Eq1}%
\end{equation}
where $a_{1}^{\prime}=a_{y}a_{1}$.

On the other hand,
\[
Pr_{\theta}(G)=Pr(a_{1}a_{2}\cdots a_{n}=a_{y}a_{z}a_{\theta_{3}}a_{\theta
_{4}}\cdots a_{\theta_{r+1}}a_{w}a_{1}a_{\theta_{r+4}}\cdots a_{\theta_{n}})
\]
is equivalent to
\begin{equation}
Pr(a_{1}a_{2}\cdots a_{n}=a_{y}a_{z}a_{\theta_{3}}a_{\theta_{4}}\cdots
a_{\theta_{r+1}}a_{w}a_{1}a_{\theta_{r+4}}\cdots a_{\theta_{n}}).
\label{Case2.Eq2}%
\end{equation}
Since $\theta_{3}=\phi_{2},...,\theta_{r+1}=\phi_{r}$ and $\theta_{r+4}%
=\phi_{r+4},...,\theta_{n}=\phi_{n}$, Equation \ref{Case2.Eq1} and Equation
\ref{Case2.Eq2} have the same number of ordered $n$-tuples of elements of $G$
as their solutions. Hence, we obtain the statement of our Theorem.

Third, we consider the case when $x=n$. In that case $x+1=0\operatorname{mod}%
n$, $y=\phi_{n}$ and $w=\phi_{n-1}=\theta_{n}$. Now, we see that
\[
Pr(a_{1}a_{2}\cdots a_{n}=a_{\phi_{1}}a_{\phi_{2}}\cdots a_{\phi_{n}})
\]
is equivalent to%
\begin{equation}
Pr(a_{1}a_{2}\cdots a_{n}=a_{\phi_{1}}a_{\phi_{2}}\cdots a_{\phi_{r}}%
a_{n}a_{z}a_{\phi_{r+3}}\cdots a_{\phi_{n-2}}a_{w}a_{y}), \label{Case3.Eq1}%
\end{equation}
On the other hand,
\[
Pr(a_{1}a_{2}\cdots a_{n}=a_{\theta_{1}}a_{\theta_{2}}\cdots a_{\theta_{n}})
\]
is equivalent to
\[
Pr(a_{1}a_{2}\cdots a_{n}a_{y}=a_{\theta_{1}}a_{\theta_{2}}\cdots
a_{\theta_{r}}a_{n}a_{y}a_{z}a_{\theta_{r+4}}\cdots a_{\theta_{n-1}}a_{w}%
a_{y}),
\]
which can be written as
\begin{equation}
Pr(a_{1}a_{2}\cdots a_{n}^{\prime}=a_{\theta_{1}}a_{\theta_{2}}\cdots
a_{\theta_{r}}a_{n}^{\prime}a_{z}a_{\theta_{r+4}}\cdots a_{\theta_{n-1}}%
a_{w}a_{y}), \label{Case3.Eq2}%
\end{equation}
where $a_{n}^{\prime}=a_{n}a_{y}$. Since $\theta_{1}=\phi_{1},...,\theta
_{r})=\phi_{r}$ and $\theta_{r+4}=\phi_{r+3},...,\theta_{n-1}=\phi_{n-2}$,
Equation \ref{Case3.Eq1} and Equation \ref{Case3.Eq2} have the same number of
ordered $n$-tuples of elements of $G$ as their solutions. Hence, we obtain the
statement of our Theorem.

Fourth, we consider the case when $y=0$. In that case $\phi_{1}=x+1$ and
$w=\phi_{n}$. Now, we see that
\[
Pr(a_{1}a_{2}\cdots a_{n}=a_{\phi_{1}}a_{\phi_{2}}\cdots a_{\phi_{n}})
\]
is equivalent to
\[
Pr(a_{1}a_{2}\cdots a_{n}=a_{x+1}a_{\phi_{2}}a_{\phi_{3}}\cdots a_{\phi_{r}%
}a_{x}a_{z}a_{\phi_{r+3}}\cdots a_{\phi_{n-1}}a_{w}),
\]
which can be written as:
\[
Pr(arsb=svrt),
\]
in which
\begin{align*}
r  &  =a_{x},s=a_{x+1},a=a_{1}a_{2}\cdots a_{x-1},\\
b  &  =a_{x+2}a_{x+3}\cdots a_{n},v=a_{\phi_{2}}a_{\phi_{3}}\cdots a_{\phi
_{i}},t=a_{\phi_{i+2}}a_{\phi_{i+3}}\cdots a_{\phi_{n}}.
\end{align*}
Here $i$ is such that $\phi_{i+1}=x$. On the other hand,
\[
Pr(a_{1}a_{2}\cdots a_{n}=a_{\theta_{1}}a_{\theta_{2}}\cdots a_{\theta_{n}})
\]
is equivalent to
\[
Pr(a_{1}a_{2}\cdots a_{n}=a_{z}a_{\theta_{2}}a_{\theta_{3}}a_{\theta_{4}%
}\cdots a_{\theta_{s}}a_{w}a_{x+1}a_{\theta_{r+3}}\cdots a_{\theta_{n-1}}%
a_{x}),
\]
which can be written as
\[
Pr(arsb=tsvr),
\]
which, in its turn, is equivalent to
\[
Pr(b^{-1}s^{-1}r^{-1}a^{-1}=r^{-1}v^{-1}s^{-1}t^{-1}).
\]
Now define
\[
a^{\prime}=b^{-1},b^{\prime}=a^{-1},v^{\prime}=v^{-1},t^{\prime}%
=t^{-1},r^{\prime}=s^{-1},\text{ and }s^{\prime}=r^{-1}.
\]
Then
\[
Pr(b^{-1}s^{-1}r^{-1}a^{-1}=r^{-1}v^{-1}s^{-1}t^{-1})=Pr(a^{\prime}r^{\prime
}s^{\prime}b^{\prime}=s^{\prime}v^{\prime}r^{\prime}t^{\prime})
\]
is the same as $Pr(arsb=tsvr)$. Hence, we obtain the statement of our Theorem.
\end{proof}

\begin{theorem}
\label{cyclic} Let $\phi$ and $\theta$ be two permutations in $S_{n}$, such
that $\theta$ is obtained from $\phi$ by an $x--y$ cyclic operation. Then%
\[
Pr(a_{1}a_{2}\cdots a_{n}=a_{\phi_{1}}a_{\phi_{2}}\cdots a_{\phi_{n}%
})=Pr(a_{1}a_{2}\cdots a_{n}=a_{\theta_{1}}a_{\theta_{2}}\cdots a_{\theta_{n}%
}).
\]

\end{theorem}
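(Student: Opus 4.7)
The plan is to parallel the strategy of Theorem \ref{exchange}: a case analysis on whether the configuration $(x+1)\to y\to x$ in $\phi^{\cdot}$ involves the wrap-around values $0$ or $n$, and within each case a chain of bijective substitutions on $G^n$ that transforms the $\phi$-equation into the $\theta$-equation without changing the solution count. I will describe the main case $y>x+1$ with no wrap-around; the symmetric case $y<x$ and the wrap-around subcases can be handled by analogous modifications, exactly as in the separate bullets of the proof of Theorem \ref{exchange}.

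In the main case, the hypothesis yields $\phi_i=x+1$, $\phi_{i-1}=y$, $\phi_{i-2}=x$ for some $3\le i\le n$, and $\theta_j=\sigma'(\phi_j)$, where $\sigma'$ is the cycle on $\{x+1,\ldots,y-1\}$ that sends $x+1\mapsto y-1$ and $t\mapsto t-1$ for $t\in\{x+2,\ldots,y-1\}$. The first step is the bijective relabeling $a_t = c_{\sigma'(t)}$ on $G^n$. Since $\theta_j=\sigma'(\phi_j)$, this turns the RHS of the $\phi$-equation into the RHS of the $\theta$-equation, while transforming its LHS into the permuted product
\[
c_1c_2\cdots c_x\,c_{y-1}\,c_{x+1}c_{x+2}\cdots c_{y-2}\,c_y\cdots c_n.
\]
The second step is the conjugation substitution $c_{y-1}\mapsto P\,c_{y-1}\,P^{-1}$ with $P=c_{x+1}c_{x+2}\cdots c_{y-2}$; for fixed values of the remaining variables this is a bijection on $c_{y-1}$. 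Using the identity $c_{y-1}P=P(P^{-1}c_{y-1}P)$, the LHS collapses back to the standard product $c_1c_2\cdots c_n$, while the single occurrence of $c_{y-1}$ on the RHS (at position $i$) becomes $P\,c_{y-1}\,P^{-1}$.

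The main obstacle will be absorbing this residual conjugation so that the transformed $\phi$-equation matches the $\theta$-equation exactly. My plan is to use an iterated sequence of single-variable conjugation substitutions that push the factors of $P$ and $P^{-1}$ one letter at a time through the adjacent positions of the RHS, exploiting the fact that each $c_t$ with $t\in\{x+1,\ldots,y-2\}$ appears exactly once in the standard product $c_{\theta_1}\cdots c_{\theta_n}$ and can absorb (through a bijective single-variable substitution) a matching $c_t$ from $P$ or $c_t^{-1}$ from $P^{-1}$. Verifying that the composite of all these substitutions is a bijection on $G^n$, and that the LHS stays equal to the standard product throughout---despite the possibly arbitrary interleaving of the positions $p_{x+2},\ldots,p_{y-1}$ of the intermediate values with the sub-product $a_xa_ya_{x+1}$ at positions $i-2,i-1,i$---is the technically delicate part of the argument.
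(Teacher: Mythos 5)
Your first two steps are correct, but the third step is a genuine gap, and it is not a minor technicality: it is the entire content of the theorem in this case. After the relabeling $a_t=c_{\sigma'(t)}$ and the substitution $c_{y-1}\mapsto Pc_{y-1}P^{-1}$ with $P=c_{x+1}\cdots c_{y-2}$, you are left needing to show that
\[
c_1\cdots c_n=c_{\theta_1}\cdots c_{\theta_{k+1}}\left(Pc_{y-1}P^{-1}\right)c_{\theta_{k+3}}\cdots c_{\theta_n}
\quad\text{and}\quad
c_1\cdots c_n=c_{\theta_1}\cdots c_{\theta_n}
\]
have the same number of solutions. The proposed mechanism --- single-variable conjugation substitutions that ``push the factors of $P$ and $P^{-1}$ one letter at a time through the adjacent positions'' --- does not work as stated. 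Each letter $c_t$ of $P$ now occurs three times on the right-hand side (once at its own position $j$ with $\theta_j=t$, and once each inside $P$ and $P^{-1}$ at position $k+2$) and once on the left-hand side; any substitution $c_t\mapsto uc_tv$ replaces all four occurrences simultaneously, and keeping the left-hand side equal to the standard product forces $u$ and $v$ to be absorbed by the neighbours $c_{t-1},c_{t+1}$, i.e.\ it only permits the ``merge adjacent variables'' moves. But the occurrences of $c_t$ inside $P,P^{-1}$ are separated from the occurrence at position $j$ by the arbitrary intervening factors $c_{\theta_{k+3}},\dots$ or $\dots,c_{\theta_{k+1}}$, and in a non-abelian group you cannot commute $c_t$ past them. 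So there is no local absorption available, and you have not exhibited (nor do I see) a sequence of bijective substitutions realizing your plan.

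The paper avoids this trap by never letting the conjugator grow into a product: it performs a \emph{single} change of variables $\hat a_x=a_xa_{x+1}$, $\hat a_{y-1}=a_{x+1}$, $\hat a_y=a_{x+1}^{-1}a_y$, $\hat a_t=a_{t+1}$ for the intermediate indices, and $\hat a_t=a_t$ otherwise. The point is the identity $a_xa_ya_{x+1}=(a_xa_{x+1})(a_{x+1}^{-1}a_ya_{x+1})=\hat a_x\hat a_y\hat a_{y-1}$, in which the conjugator is the single fresh variable $\hat a_{y-1}=a_{x+1}$, chosen so that $\hat a_{y-1}$ and $\hat a_y$ are adjacent in the standard order (so $\hat a_{y-1}\hat a_y=a_y$ sits correctly on the left) while their reversed product is exactly the conjugate needed on the right. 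Your decomposition into ``pure relabeling first, fix the left side afterwards'' destroys this adjacency structure and replaces the one-variable conjugator by the long product $P$; to complete your argument you would have to either redo it as one simultaneous substitution (recovering the paper's proof) or supply a genuinely new mechanism for eliminating a conjugation by a product of variables, which you have not done. The same objection applies to the case $y<x$ and to the wrap-around subcases.
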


\begin{proof}
We consider four different possible cases, and prove our theorem for each one
of them:

First case is when $x\neq0,n$ and $y\neq0$. In this case, for some $1\leq
k\leq n-2$, $\phi_{k}=x$, $\phi_{k+1}=y$ and $\phi_{k+2}=x+1$. Hence,
\[
Pr(a_{1}a_{2}\cdots a_{n}=a_{\phi_{1}}a_{\phi_{2}}\cdots a_{\phi_{n}})
\]
can be written as
\begin{equation}
Pr(a_{1}a_{2}\cdots a_{n}=a_{\phi_{1}}a_{\phi_{2}}\cdots a_{\phi_{k-1}}%
a_{x}a_{y}a_{x+1}a_{\phi_{k+3}}\cdots a_{\phi_{n}}) \label{cyclic.first}%
\end{equation}

If $y>x$, then Expression \ref{cyclic.first} is equivalent to the expression
\begin{equation}
Pr(a_{1}\cdots(a_{x}a_{x+1})\cdots a_{y}\cdots a_{n}=a_{\phi_{1}}\cdots
a_{\phi_{k-1}}(a_{x}a_{x+1})(a_{x+1}^{-1}a_{y}a_{x+1})a_{\phi_{k+3}}\cdots
a_{\phi_{n}}) \label{cyclic.first.case1.eq1}%
\end{equation}

Now, we define:

\begin{itemize}
\item $\hat{a}_{t}=a_{t}$ if $t<x$ or $t>y$;

\item $\hat{a}_{x}=a_{x}a_{x+1}$;

\item $\hat{a}_{t}=a_{t+1}$ if $x+1<t<y-1$;

\item $\hat{a}_{y-1}=a_{x+1}$;

\item $\hat{a}_{y}=a_{x+1}^{-1}a_{y}$.
\end{itemize}

From the last two items we get that:

\begin{itemize}
\item $\hat{a}_{y-1}\hat{a}_{y}=a_{y}$;

\item $\hat{a}_{y}\hat{a}_{y-1}=a_{x+1}^{-1}a_{y}a_{x+1}$.
\end{itemize}

We see that Expression \ref{cyclic.first.case1.eq1} is equivalent to the
expression
\begin{equation}
Pr(\hat{a}_{1}\hat{a}_{2}\cdots\hat{a}_{n}=\hat{a}_{\theta_{1}}\hat{a}%
_{\theta_{2}}\cdots\hat{a}_{\theta_{k-1}}\hat{a}_{x}\hat{a}_{y}\hat{a}%
_{y-1}\hat{a}_{\theta_{k+3}}\cdots\hat{a}_{\theta_{n}}),
\label{cyclic.first.case1.eq2}%
\end{equation}

in which:

\begin{itemize}
\item $\theta_{t}=\phi_{t}$ for $\phi_{t}\leq x$ or $\phi_{t}\geq y$;

\item $\theta_{t}=y-1$ for $\phi_{t}=x+1$;

\item $\theta_{t}=\phi_{t}-1$ for $x+1<\phi_{t}<y$.
\end{itemize}

If $y<x$, then Expression \ref{cyclic.first} is equivalent to the expression%
\begin{equation}
Pr(a_{1}\cdots a_{y}\cdots(a_{x}a_{x+1})\cdots a_{n}=a_{\phi_{1}}\cdots
a_{\phi_{k-1}}(a_{x}a_{y}a_{x}^{-1})(a_{x}a_{x+1})a_{\phi_{k+3}}\cdots
a_{\phi_{n}}) \label{cyclic.first.case2.eq1}%
\end{equation}

Now, we define:

\begin{itemize}
\item $\hat{a}_{t}=a_{t}$ for $t<y$ or for $t>x+1$;

\item $\hat{a}_{t}=a_{t-1}$ for $y+1<t\leq x$;

\item $\hat{a}_{x+1}=a_{x}a_{x+1}$;

\item $\hat{a}_{y}=a_{y}a_{x-1}$;

\item $\hat{a}_{y+1}=a_{x}$.
\end{itemize}

From the last two items we get that:

\begin{itemize}
\item $\hat{a}_{y}\hat{a}_{y+1}=a_{y}$;

\item $\hat{a}_{y+1}\hat{a}_{y}=a_{x}a_{y}a_{x}^{-1}$.
\end{itemize}

We see that Expression \ref{cyclic.first.case2.eq1} is equivalent to the
expression
\begin{equation}
Pr(\hat{a}_{1}\hat{a}_{2}\cdots\hat{a}_{n}=\hat{a}_{\theta_{1}}\hat{a}%
_{\theta_{2}}\cdots\hat{a}_{\theta_{k-1}}\hat{a}_{y+1}\hat{a}_{y}\hat{a}%
_{x+1}\hat{a}_{\theta_{k+3}}\cdots\hat{a}_{\theta_{n}}),
\label{cyclic.first.case2.eq2}%
\end{equation}

in which:

\begin{itemize}
\item $\theta_{t}=\phi_{t}$ for $\phi_{t}\leq y$ or $\phi_{t}\geq x+1$;

\item $\theta_{t}=y+1$ for $\phi_{t}=x$;

\item $\theta_{t}=\phi_{t}+1$ for $y<\phi_{t}<x$.
\end{itemize}

Second case is when $x=0$. In this case, $\phi_{1}=y$ and $\phi_{2}=1$.
Hence,
\[
Pr(a_{1}a_{2}\cdots a_{n}=a_{\phi_{1}}a_{\phi_{2}}\cdots a_{\phi_{n}})
\]
can be written as
\begin{equation}
Pr(a_{1}a_{2}\cdots a_{n}=a_{y}a_{1}a_{\phi_{3}}\cdots a_{\phi_{n}})
\label{cyclic.second}%
\end{equation}

Expression \ref{cyclic.second} is equivalent to the expression
\begin{equation}
Pr(a_{2}\cdots a_{n}=(a_{1}^{-1}a_{y}a_{1})a_{\phi_{3}}\cdots a_{\phi_{n}})
\label{cyclic.second1}%
\end{equation}

Now, we define:

\begin{itemize}
\item $\hat{a}_{t}=a_{t}$ for $t>y$;

\item $\hat{a}_{t}=a_{t+1}$ for $1\leq t<y-1$;

\item $\hat{a}_{y-1}=a_{1}$;

\item $\hat{a}_{y}=a_{1}^{-1}a_{y}$.
\end{itemize}

From the last two items we get that:

\begin{itemize}
\item $\hat{a}_{y-1}\hat{a}_{y}=a_{y}$;

\item $\hat{a}_{y}\hat{a}_{y-1}=a_{1}^{-1}a_{y}a_{1}$.
\end{itemize}

We see that Expression \ref{cyclic.second1} is equivalent to the expression%
\begin{equation}
Pr(\hat{a}_{1}\cdots\hat{a}_{n}=\hat{a}_{y}\hat{a}_{y-1}\hat{a}_{\theta_{3}%
}\cdots\hat{a}_{\theta_{n}}), \label{cyclic.second2}%
\end{equation}

in which:

\begin{itemize}
\item $\theta_{t}=\phi_{t}$ for $\phi_{t}\geq y$;

\item $\theta_{t}=y-1$ for $\phi_{t}=1$;

\item $\theta_{t}=\phi_{t}-1$ for $1<\phi_{t}<y$.
\end{itemize}

Third case $-$ when $x=n$. In this case, $\phi_{n}=y$ and $\phi_{n-1}=n$.
Hence,
\[
Pr(a_{1}a_{2}\cdots a_{n}=a_{\phi_{1}}a_{\phi_{2}}\cdots a_{\phi_{n}})
\]
can be written as
\begin{equation}
Pr(a_{1}a_{2}\cdots a_{n}=a_{\phi_{1}}\cdots a_{\phi_{n-2}}a_{n}a_{y})
\label{cyclic.third}%
\end{equation}

Expression \ref{cyclic.third} is equivalent to the expression
\begin{equation}
Pr(a_{1}a_{2}\cdots a_{n-1}=a_{\phi_{1}}\cdots a_{\phi_{n-2}}a_{n}a_{y}%
a_{n}^{-1}) \label{cyclic.third1}%
\end{equation}

Now, we define:

\begin{itemize}
\item $\hat{a}_{t}=a_{t}$ for $t<y$;

\item $\hat{a}_{t}=a_{t-1}$ for $y+1<t<n$;

\item $\hat{a}_{y}=a_{y}a_{n}^{-1}$;

\item $\hat{a}_{y+1}=a_{n}$.
\end{itemize}

From the last two items we get that:

\begin{itemize}
\item $\hat{a}_{y}\hat{a}_{y+1}=a_{y}$;

\item $\hat{a}_{y+1}\hat{a}_{y}=a_{n}a_{y}a_{n}^{-1}$.
\end{itemize}

We see that Expression \ref{cyclic.third1} is equivalent to the expression
\begin{equation}
Pr(\hat{a}_{1}\cdots\hat{a}_{n}=\hat{a}_{\theta_{1}}\cdots\hat{a}%
_{\theta_{n-2}}\hat{a}_{y+1}\hat{a}_{y}), \label{cyclic.third2}%
\end{equation}

in which:

\begin{itemize}
\item $\theta_{t}=\phi_{t}$ for $1\leq\phi_{t}\leq y$;

\item $\theta_{t}=\phi_{t}+1$ for $y<\phi_{t}<n$;

\item $\theta_{t}=y+1$ for $\phi_{t}=n$.
\end{itemize}

Forth case is when $y=0$. In this case, $\phi_{1}=x+1$ and $\phi_{n}=x$.
Hence,
\[
Pr(a_{1}a_{2}\cdots a_{n}=a_{\phi_{1}}a_{\phi_{2}}\cdots a_{\phi_{n}})
\]
can be written as
\begin{equation}
Pr(a_{1}a_{2}\cdots a_{n}=a_{x+1}a_{\phi_{2}}\cdots a_{\phi_{n-1}}a_{x})
\label{cyclic.forth}%
\end{equation}

Expression \ref{cyclic.forth} is equivalent to the expression
\begin{equation}
Pr(a_{x}a_{1}a_{2}\cdots a_{x-1}(a_{x}a_{x+1})\cdots a_{n}a_{x}^{-1}%
=(a_{x}a_{x+1})a_{\phi_{2}}\cdots a_{\phi_{n-1}}) \label{cyclic.forth1}%
\end{equation}

In its turn, Expression \ref{cyclic.forth1} is equivalent to the expression
\begin{equation}
Pr(a_{x}a_{1}a_{2}\cdots a_{x-1}(a_{x}a_{x+1})\cdots a_{n}=(a_{x}%
a_{x+1})a_{\phi_{2}}\cdots a_{\phi_{n-1}}a_{x}) \label{cyclic.forth2}%
\end{equation}

Now, we define:

\begin{itemize}
\item $\hat{a}_{t}=a_{t}$ for $t>x+1$;

\item $\hat{a}_{1}=a_{x}$;

\item $\hat{a}_{t}=a_{t-1}$ for $1<t\leq x$;

\item $\hat{a}_{x+1}=a_{x}a_{x+1}$.
\end{itemize}

We see that Expression \ref{cyclic.forth2} is equivalent to the expression
\begin{equation}
Pr(\hat{a}_{1}\cdots\hat{a}_{n}=\hat{a}_{x+1}\hat{a}_{\theta}{}_{2}\cdots
\hat{a}_{\theta}{}_{n-1}\hat{a}_{1}), \label{cyclic.forth3}%
\end{equation}

in which:

\begin{itemize}
\item $\theta_{t}=\phi_{t}$ for $\phi_{t}>x$;

\item $\theta_{t}=1$ for $\phi_{t}=x$;

\item $\theta_{t}=\phi_{t}+1$ for $1\leq\phi_{t}<x$.
\end{itemize}
\end{proof}

\begin{example}
\label{spec.example}

Let, again, the group $G$ be $D_{4}$ or $Q_{8}$. We get that%
\[
Pr(a_{1}a_{2}a_{3}a_{4}\cdots a_{2k-1}a_{2k}a_{2k+1}a_{2k+2}\cdots a_{n}%
=a_{2}a_{1}a_{4}a_{3}\cdots a_{2k}a_{2k-1}a_{2k+1}a_{2k+2}\cdots a_{n}%
\]
is equal to%
\[
(\frac{5}{8})^{k}+\frac{k!}{2!(k-2)!}(\frac{5}{8})^{k-2}(\frac{3}{8}%
)^{2}+\frac{k!}{4!(k-4)!}(\frac{5}{8})^{k-4}(\frac{3}{8})^{4}+...
\]

Since an even number of the inverted pairs $a_{j+1}a_{j}$ must be equal to
$ca_{j}a_{j+1}$, where $c$ is the nontrivial element from the center of $G$.
By a direct computation we see that for $k=0,1,2,...[[\frac{n}{2}]]$ we get
different probabilities for the corresponding permutation equalities.
\end{example}

\begin{theorem}
\label{equiv-alternating} If $\theta$ and $\phi$ are $x--y$ equivalent then
$Gr(\phi)$ and $Gr(\theta)$ have the same number of alternating cycles.
\end{theorem}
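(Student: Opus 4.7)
The plan is to reduce to a single operation and then invoke the bridge provided by Theorem \ref{Theorem 8}. By the very definition of $x$--$y$ equivalence, $\phi$ and $\theta$ are connected by a finite chain $\tau_1=\phi,\tau_2,\ldots,\tau_k=\theta$ in which consecutive terms differ by a single $x$--$y$ exchange operation or a single $x$--$y$ cyclic operation. A straightforward induction on $k$ reduces the statement to the case $k=2$, i.e. to showing that \emph{one} exchange or cyclic operation preserves the number of alternating cycles. By Theorem \ref{Theorem 8} that number equals the number of disjoint cycles of $\phi^{\circ}\in S(1+n)$, so what I actually want is that a single operation preserves the cycle count of $\phi^{\circ}$.

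For the cyclic operation this is essentially already packaged by Lemma \ref{howcyclicworks}. In the case $y>x+1$, the lemma describes how to obtain $\theta^{\circ}$ from $\phi^{\circ}$: inside whichever cycle of $\phi^{\circ}$ contains $x+1$ one first replaces $x+1$ by the length-two piece $y-1\mapsto x$; then each $t\in\{x+2,\ldots,y-2\}$ is relabeled to $t-1$; and finally the piece $y-1\mapsto x$ is contracted back to the single entry $y-2$. The key observation is that every one of these local moves is confined to a single cycle of the ambient permutation, and a pure relabeling of elements can never split or merge cycles. Hence the cycle count is preserved. The case $y<x$ is symmetric, and both wrap-around sub-cases ($x=0$ and $y=0$) are treated the same way.

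For the exchange operation I would prove the analogue of Lemma \ref{howcyclicworks}: go through the four sub-cases of Definition \ref{def.exchange}, compute in each one exactly how the local piece $z\to x\dashrightarrow x+1\to y\to w$ is turned into $z\to y\to x\dashrightarrow x+1\to w$ at the level of $\phi^{\cdot}$, and then multiply by $(0,1,\dots,n)$ on the right to read off the corresponding rearrangement of $\phi^{\circ}$. In every sub-case the rearrangement is a local surgery on the cyclic notation of $\phi^{\circ}$ that relabels or shifts a few entries within one or two cycles without splitting any cycle or fusing two cycles; the cycle count is therefore invariant.

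The main obstacle is this last case analysis for the exchange operation, and in particular the wrap-around sub-cases where $x=0$, $y=0$, or $x=n$: here the position of $0$ in $\phi^{\circ}$ interacts non-trivially with the multiplication by $(0,1,\ldots,n)$, so one has to verify separately that the local surgery still does not change the number of cycles. The bookkeeping is tedious but parallel to the four-case argument already carried out in the proof of Theorem \ref{exchange}; essentially the same case split, done on the combinatorial side instead of the probabilistic side, yields the result.
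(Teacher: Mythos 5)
Your overall strategy coincides with the paper's: reduce to a single operation, pass from alternating cycles of $Gr(\phi)$ to cycles of $\phi^{\circ}$ via Theorem~\ref{Theorem 8}, and handle the cyclic operation by reading off from Lemma~\ref{howcyclicworks} that $\theta^{\circ}$ differs from $\phi^{\circ}$ only by relabelings and local contractions/expansions inside cycles. Up to that point the proposal is sound and matches the paper.

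The gap is in the exchange case, which is the substantive half of the theorem. You propose to verify, case by case, that the induced surgery on $\phi^{\circ}$ is ``a local surgery \dots within one or two cycles without splitting any cycle or fusing two cycles,'' and you conclude invariance of the cycle count from that description. But that description is precisely the statement to be proved, not a reason for it: a generic local modification touching two cycles of a permutation can perfectly well fuse them (multiplying by a transposition whose two entries lie in different cycles is exactly such a ``local surgery''), and a modification inside one cycle can split it. Asserting that the surgery has the non-splitting, non-fusing property in every sub-case, without exhibiting what the surgery actually is, begs the question; and the four-case bookkeeping you defer to is not merely tedious --- it is where the entire content lies. The paper closes this gap with a single uniform observation that makes no case split necessary: the exchange operation relocates the one element $y$ inside the big black cycle (from immediately after $x+1$ in $\phi^{\cdot}$ to immediately before $x$ in $\theta^{\cdot}$), and after multiplying by $(0,1,\dots,n)$ this translates into relocating the \emph{single} element $x$ inside $\phi^{\circ}$: one has $\phi^{\circ}(x)=\phi^{\cdot}(x+1)=y$ and $\theta^{\circ}(y-1)=\theta^{\cdot}(y)=x$, with all other values unchanged. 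Deleting one element from a cycle and reinserting it elsewhere manifestly cannot change the number of cycles (counting fixed points as cycles, and noting $x$ is not a fixed point here). You should either supply this one-element-relocation argument or actually carry out the case analysis you sketch; as written, the exchange case is not established.
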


\begin{proof}
It is sufficient to prove that if $\theta$ is obtained from $\phi$ by an
$x--y$ exchange operation or by an $x--y$ cyclic operation, then $Gr(\phi)$
and $Gr(\theta)$ have the same number of alternating cycles.

If $\theta$ is obtained from $\phi$ by an $x--y$ exchange operation, then the
big black\newline$(n+1)$-cycle $\phi^{\cdot}$ contains $z\rightarrow x$ and
$x+1\rightarrow y\rightarrow w$, and the the big black $(n+1)$-cycle
$\theta^{\cdot}$ contains $z\rightarrow y\rightarrow x$ and $x+1\rightarrow
w$. Thus, (see Definition \ref{def.exchange}) the only difference between big
black $(n+1)$-cycles $\phi^{\cdot}$ and $\theta^{\cdot}$ is the relocation of
$y$ from being immediately after $x+1$ in $\phi^{\cdot}$ to being immediately
before $x$ in $\theta^{\cdot}$. Since, $\phi^{\circ}=\phi^{\cdot}%
\cdot(0,1,...,n)$, and $\theta^{\circ}=\theta^{\cdot}\cdot(0,1,...,n)$, we get%
\[
\phi^{\circ}=(....,x+1,y,w,...,z,x,...)\cdot(0,1,...,x,x+1,...,n)
\]
and%
\[
\theta^{\circ}=(....,x+1,w,...,z,y,x)\cdot(0,1,...,y-1,y,...,n).
\]
This implies that $\phi^{\circ}(x)=\phi^{\cdot}(x+1)=y$ and $\theta^{\circ
}(y-1)=\theta^{\cdot}(y)=x$. Therefore, the only difference between
$\theta^{\circ}$ and $\phi^{\circ}$ is the relocation of $x$ from being
immediately before $y$ in the cyclic presentation of $\phi^{\circ}$ to being
immediately after $y-1$ in the cyclic presentation of $\theta^{\circ}$. Hence,
the number of cycles in $\theta^{\circ}$, as we count the cycles of length one
(fixed elements), is the same as the number of cycles in $\phi^{\circ}$.

If $\theta$ is obtained from $\phi$ by an $x--y$ cyclic operation then it is
clear from Lemma \ref{howcyclicworks} that the cycles of $\theta^{\circ}$ are
obtained from the cycles of $\phi^{\circ}$ by certain trivial interchanges.
Hence, the number of cycles in $\theta^{\circ}$ and in $\phi^{\circ}$, as we
count the cycles of length one, is the same.

Therefore, after performing $x--y$ exchange and $x--y$ cyclic operations on
$\phi$ we obtain $\theta$, such that the number of cycles in $\theta^{\circ}$
is the same as in $\phi^{\circ}$. And the numbers of cycles in $\phi^{\circ}$
and $\theta^{\circ}$ are equal to the numbers of alternating cycles in the
cycle graphs $Gr(\phi)$ and $Gr(\theta)$, respectively.
\end{proof}

To prove Theorem \ref{hultman-exchange-cyclic} below, which asserts the
opposite direction of Theorem \ref{equiv-alternating}, we need the following
four technical lemmas.

\begin{lemma}
\label{tech.lem.1} Let $\phi$ and $\theta$ be $x--y$ equivalent permutations
in $S_{2t}$, such that $Gr(\phi)$ and, consequently, $Gr(\theta)$ contain only
one alternating cycle. Let $\phi^{\cdot}=(\phi_{2t},\phi_{2t-1},\dots,\phi
_{0})$ and $\theta^{\cdot}=(\theta_{2t},\theta_{2t-1},\dots,\theta_{0})$. Then
any two permutations $\mu,\tau\in S_{2t+2}$, such that%
\[
\mu^{\cdot}=(\phi_{i},\phi_{i-1},\dots,\phi_{0},\phi_{2t},\dots,\phi
_{i+1},2t+1,2t+2)
\]
and%
\[
\tau^{\cdot}=(\theta_{j},\theta_{j-1},\dots,\theta_{0},\theta_{2t}%
,\dots,\theta_{j+1},2t+1,2t+2),
\]
are also $x--y$ equivalent and also have only one alternating cycle in their
cycle graphs.
\end{lemma}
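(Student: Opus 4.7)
The plan is to address two separate assertions: first, that $\mu$ and $\tau$ each have a single alternating cycle in their cycle graphs; second, that they are $x$-$y$ equivalent in $S_{2t+2}$.

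For the alternating-cycle count, I invoke Theorem \ref{Theorem 8}, which reduces the problem to showing that $\mu^\circ$ is a single cycle in $S(2t+3)$, given that $\phi^\circ$ is a single cycle in $S(2t+1)$ (a consequence of $\phi$ having exactly one alternating cycle). A direct comparison of $\mu^\circ$ and $\phi^\circ$, using the definitions $\mu^\circ=\mu^\cdot\cdot(0,1,\dots,2t+2)$ and $\phi^\circ=\phi^\cdot\cdot(0,1,\dots,2t)$ together with the stated form of $\mu^\cdot$, reveals that $\mu^\circ$ agrees with $\phi^\circ$ on $\{0,1,\dots,2t\}$ except at (at most) two positions, where edges of $\phi^\circ$ are replaced by short detours through the new vertices $2t+1$ and $2t+2$. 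Since splicing new vertices into a single cycle yields a single cycle, $\mu^\circ$ is a single cycle; by Theorem \ref{Theorem 8}, $\mu$ has a single alternating cycle, and the same argument handles $\tau$.

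For the $x$-$y$ equivalence between $\mu$ and $\tau$ I would proceed in two stages. In stage one I fix $\phi$ and vary the insertion index $i$, showing that any two permutations obtained from $\phi$ via the formula but with different $i$'s are $x$-$y$ equivalent in $S_{2t+2}$. The variation of $i$ amounts to moving the consecutive pair $(2t+1,2t+2)$ along the black cycle of $\mu^\cdot$; I realize each single-step shift via an $x$-$y$ cyclic operation, augmented when necessary by an $x$-$y$ exchange operation involving a neighboring $\phi_k$. Stage two handles the case where $\phi$ and $\theta$ are connected by a single $x$-$y$ exchange or cyclic operation and $\mu,\tau$ use the same insertion position: I lift the operation directly to $\mu$, invoking Theorems \ref{exchange} and \ref{cyclic} for the resulting permutation, whenever the witnesses $z,x,x+1,y,w$ are disjoint from $\phi_i$ and $\phi_{i+1}$; this yields a valid insertion associated to $\theta$. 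An induction on the length of the $x$-$y$ equivalence chain between $\phi$ and $\theta$, combined with stage one, closes the argument.

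The main obstacle I anticipate is the borderline situation in stage two where the operation on $\phi$ involves vertices adjacent in the black cycle to the inserted pair $(2t+1,2t+2)$ in $\mu$; in that case the operation does not lift verbatim to an $x$-$y$ operation on $\mu$. My workaround is to first invoke stage one to relocate $(2t+1,2t+2)$ to a different position on the black cycle where it no longer interferes, then perform the lifted operation, and finally relocate $(2t+1,2t+2)$ back if necessary. Verifying the preconditions of the exchange and cyclic operations (Definitions \ref{def.exchange} and \ref{cyclic-cdot}) at each shifting step is the most delicate bookkeeping of the proof, but the argument relies only on the fact that $2t+1$ and $2t+2$ are the two largest labels and that the grey-edge chain $2t\dashrightarrow 2t+1\dashrightarrow 2t+2\dashrightarrow 0$ is always present, providing enough freedom to carry out the necessary shifts.
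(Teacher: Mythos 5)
Your proposal follows essentially the same route as the paper's proof: the same splicing argument (via Theorem \ref{Theorem 8}) shows that $\mu^{\circ}$ and $\tau^{\circ}$ remain single cycles, and the $x$--$y$ equivalence is obtained, as in the paper, by combining equivalence of all insertion positions of the pair $(2t+1,2t+2)$ with a lifting of each single exchange or cyclic operation from $\phi$ to $\mu$, followed by induction on the length of the equivalence chain. Your ``relocate the pair, then lift'' treatment of the interference case is a clean variant of the paper's explicit three-exchange sequence for the case $x=2t$, $y=\phi_{2t}$; just note that Theorems \ref{exchange} and \ref{cyclic} concern preservation of probabilities and are not what is needed in that step --- what you need is only that the lifted operation is a legal $x$--$y$ operation on $\mu$.
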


\begin{proof}
The fact that $Gr(\phi)$ contains only one alternating cycle implies that
$\phi^{\circ}=\phi^{\cdot}\cdot(0,1,\dots,2t)$ is a cycle of length $2t+1$ in
$S(1+2t)$. Consider $\mu\in S_{2t+2}$ such that
\[
\mu^{\cdot}=(\phi_{2t},\phi_{2t-1},\dots,\phi_{0},2t+1,2t+2).
\]
In $\phi^{\circ}$ the element $\phi_{u}=\phi_{0}-1$ (where $-1$ means $2t$)
went to element $\phi_{2t}$ and the element $\phi_{v}=2t$ went to element
$\phi^{\cdot}(0)$. In $\mu^{\circ}$ the element $\phi_{u}=\phi_{0}-1$ goes to
element $2t+1$ which, in its turn, goes to element $\phi_{2t}$, and the
element $\phi_{v}=2t$ goes to element $2t+2$ which, in its turn, goes to
element $\phi^{\cdot}(0)$. This implies that $\mu^{\circ}$ is just a cycle of
length $2t+3$. Hence, $Gr(\mu)$ also contains only one alternating
cycle.\newline

First, let us show that the permutation $\mu\in S_{2t+2}$ is equivalent to
some permutation $\tau$, such that
\[
\tau^{\cdot}=(\theta_{j},\theta_{j-1},\dots,\theta_{0},\theta_{2t}%
,\dots,\theta_{j+1},2t+1,2t+2).
\]

We start by considering the situation, when $\theta$ was obtained from $\phi$
by one $x--y$ exchange operation.

In the case, when $x\neq2t$ (note, that $2t=0-1=\phi_{0}-1\operatorname{mod}%
2t$) and $y\neq\phi_{2t}$, performing the same $x--y$ exchange operation on
$\phi$ and on $\mu$ produces two permutations $\theta$ and $\tau$,
respectively, such that $\tau^{\cdot}$ is just $\theta^{\cdot}$ with
$2t+1\rightarrow2t+2$ inserted in some place. Performing that $x--y$ exchange
operation on $\phi$ relocates $y$ from its place immediately after $x+1$ in
$\phi^{\cdot}$ to its place immediately before $x$ in $\theta^{\cdot}$.
Performing that same $x--y$ exchange operation on $\mu$ relocates $y$ from its
place immediately after $x+1$ in $\mu^{\cdot}$ to its place immediately before
$x$ in $\tau^{\cdot}$.

Now let's look at the permutation $\mu$, for which we have that $y\neq
\phi_{2t},2t+1,2t+2$, $x\neq2t,2t+1,2t+2$ and $x+1\neq\phi_{0},2t+1,2t+2$. In
the big black cycle $\mu^{\cdot}$ the element $\phi_{2t}$ stands right after
the element $2t+2$, and the element $\phi_{0}$ stands right before the element
$2t+1$. Hence, the whole piece $\phi_{0}\rightarrow2t+1\rightarrow
2t+2\rightarrow\phi_{2t}$ of $\mu$ is not \textquotedblleft broken" by our
$x--y$ exchange operation, and is preserved in $\tau^{\cdot}$.

In the case, when $x=2t=\phi_{0}-1$, we get that $y=\phi_{2t}$. Since the
element $y$ in $\phi^{\cdot}$ has to be standing right after the element
$x+1=\phi_{0}$, we get that $y=\phi_{2t}$. Let $i$ be such that $\phi
_{i}=2t=x$. Keep in mind, that in all the descriptions of the black cycles,
which follow, $\phi_{2t}=y$, $\phi_{i}=2t=x$ and $x+1=\phi_{0}$. The $x--y$
exchange operation, applied on $\phi$, produces $\theta$, such that in
$\theta^{\cdot}$ the element $y=\phi_{2t}$ stands right before the element
$x=\phi_{0}-1=2t=\phi_{i}$. Thus, we have%
\[
\theta^{\cdot}=\phi_{2t-1}\rightarrow\phi_{2t-2}\rightarrow...\rightarrow
\phi_{i+1}\rightarrow y\rightarrow x\rightarrow\phi_{i-1}\rightarrow
...\rightarrow\phi_{0}\rightarrow\phi_{2t-1}.
\]

Performing $x--(2t+1)$ exchange operation on $\mu$ produces a permutation
$\rho$, such that in $\rho^{\cdot}$ the element $2t+1$ stands right before the
element $x=\phi_{0}-1$, and the element $2t+2$ stands right after the element
$x+1=\phi_{0}$.

Thus we have%
\[
\rho^{\cdot}=\phi_{2t}\rightarrow\phi_{2t-1}\rightarrow...\rightarrow
\phi_{i+1}\rightarrow(2t+1)\rightarrow x\rightarrow\phi_{i-1}\rightarrow
...\rightarrow(x+1)\rightarrow(2t+2)\rightarrow y.
\]

Next, performing $x--(2t+2)$ exchange operation on $\rho$ produces a
permutation $\varrho$, such that%
\[
\varrho^{\cdot}=\phi_{2t}\rightarrow\phi_{2t-1}\rightarrow...\rightarrow
\phi_{i+1}\rightarrow(2t+1)\rightarrow(2t+2)\rightarrow x\rightarrow\phi
_{i-1}\rightarrow...\rightarrow(x+1)\rightarrow y.
\]

Finally, performing $x--y$ exchange operation on $\varrho$ produces a
permutation $\tau$, such that%
\begin{gather*}
\tau^{\cdot}=\phi_{2t-1}\rightarrow\phi_{2t-2}\rightarrow...\rightarrow
\phi_{i+1}\rightarrow(2t+1)\rightarrow(2t+2)\rightarrow\\
\rightarrow y\rightarrow x\rightarrow\phi_{i-1}\rightarrow....\rightarrow
\phi_{0}\rightarrow\phi_{2t-1}.
\end{gather*}

Now we prove that any permutation $\pi$, such that
\[
\pi^{\cdot}=(\theta_{s},\theta_{s-1},\dots,\theta_{0},\theta_{2t},\dots
,\theta_{s+1},2t+1,2t+2)
\]
can be obtained from the permutation $\mu$ by several $x--y$ exchange
operations. We have already shown, that some $\tau$, such that
\[
\tau^{\cdot}=(\theta_{j},\theta_{j-1},\dots,\theta_{0},\theta_{2t}%
,\dots,\theta_{j+1},2t+1,2t+2),
\]
can be obtained from $\mu$. Performing $(2t+2)--\theta_{j}$ exchange operation
on $\tau$ produces permutation $\xi$, such that
\[
\xi^{\cdot}=(\theta_{j-1},\dots,\theta_{0},\theta_{2t},\dots,\theta
_{j+1},\theta_{j},2t+1,2t+2).
\]
Performing $(2t+2)--\theta_{j-1}$ exchange operation on $\xi$ produces
permutation $\chi$, such that
\[
\chi^{\cdot}=(\theta_{j-2},\dots,\theta_{0},\theta_{2t},\dots,\theta
_{j},\theta_{j-1},2t+1,2t+2).
\]
Continuing this process, we can obtain any permutation $\pi$, such that
\[
\pi^{\cdot}=(\theta_{s},\theta_{s-1},\dots,\theta_{0},\theta_{2t},\dots
,\theta_{s+1},2t+1,2t+2).\newline%
\]

In the situation, when $\theta$ was obtained from $\phi$ by one $x--y$ cyclic
operation, it is trivial to check that any $x--y$ cyclic operation, where $x$
and $y$ are between $0$ and $2t$, does not affect $2t+1\rightarrow2t+2$ in the
big black cycle $\mu^{\cdot}$.

All of the argument above, obviously, also works in the situations, when
$\phi$ was obtained from $\theta$ by one $x--y$ exchange or one $x--y$ cyclic operation.

Therefore, since any $x--y$ equivalence is obtained by performing several
$x--y$ exchange and $x--y$ cyclic operations, our Lemma holds.
\end{proof}

\begin{lemma}
\label{tech.lem.2} Let $\phi$ be a permutation in $S_{2t}$, such that
$Gr(\phi)$ contains only one alternating cycle. If
\[
\phi^{\cdot}=(\phi_{2t-2},\phi_{2t-1},\dots,\phi_{0},2t-1,2t)
\]
then for any $0\leq a\leq2t$ and any permutation $\theta$ in $S_{2t}$, such
that
\[
\theta^{\cdot}=(\theta_{2t-2},\theta_{2t-1},\dots,\theta_{0},a,a+1)
\]
(here $a+1$ is considered modulo $(2t+1)$), where $\theta_{i}=\phi
_{i}+a\operatorname{mod}\left(  2t+1\right)  $ for all $i$, $\phi$ and
$\theta$ are in the same $x--y$ exchange orbit. Therefore, $\phi$ and $\theta$
are $x--y$ equivalent.
\end{lemma}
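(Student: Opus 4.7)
The plan is to proceed by induction on $a$. The base case $a=0$ is immediate since then $\theta=\phi$, which is trivially in the same $x$--$y$ exchange orbit as itself. For the inductive step it suffices to exhibit, for any permutation $\phi$ of the form required by the hypothesis, an $x$--$y$ exchange realization of the ``shift by one'' operation: a sequence of $x$--$y$ exchanges transforming $\phi$ into the permutation whose big black cycle is obtained by relabeling every element $k$ as $k+1$ modulo $2t+1$. Iterating this shift $a$ times then produces the desired $\theta$ and completes the induction.

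To realize a shift by one purely through $x$--$y$ exchanges, I would exploit the special form $\phi^{\cdot}=(\ldots,\phi_{0},2t-1,2t)$ together with the single-alternating-cycle hypothesis. By Theorem \ref{Theorem 8}, the latter forces $\phi^{\circ}$ to be one $(2t+1)$-cycle, which provides the ``connectivity'' needed to keep locating the prerequisite configurations $z\rightarrow x\dashrightarrow x+1\rightarrow y\rightarrow w$ at each stage. The idea is to walk through the big black cycle and, one element at a time, use exchanges to ``rotate'' each label by one, starting from the pair $(2t-1,2t)$ at the end and working inwards through $\phi_{0},\phi_{1},\ldots$. At every stage Theorem \ref{equiv-alternating} guarantees that the intermediate permutation still has exactly one alternating cycle, so the next required configuration remains available and the chain of exchanges can be continued.

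The main obstacle will be the explicit combinatorial bookkeeping in the single-shift step: at each exchange one must pinpoint the appropriate $x,y,z,w$ in the current big black cycle and verify that the prerequisites of Definition \ref{def.exchange} are met. The most delicate subcase is the boundary behaviour, where the elements $0$, $2t-1$, and $2t$ cyclically change roles, and where one must be careful that performing the exchange does not accidentally destroy the structural position of the distinguished adjacent pair before the next exchange in the sequence gets to use it. Once the pattern of exchanges for a single shift by one is laid out and the boundary cases are handled, the proof reduces to a mechanical verification, and the full statement of the lemma follows by iterating the shift $a$ times.
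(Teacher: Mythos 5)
Your reduction of the lemma to realizing the cyclic relabeling $k\mapsto k+1 \pmod{2t+1}$ by exchange operations is exactly where the entire content of the lemma lives, and that is the step you leave unproved. An $x--y$ exchange does not ``rotate a label'': it relocates the element $y$ (the black successor of $x+1$) to the position just before $x$ in the big black cycle, so a global relabeling can only emerge from an orchestrated sequence of such relocations. You never exhibit that sequence; calling it ``mechanical verification'' defers the one construction the lemma actually requires. The remark that the single-alternating-cycle property ``keeps the required configuration available'' does not substitute for this: for every $x$ some exchange is always available (there is always a unique $z\rightarrow x$, a unique $x+1\rightarrow y$, and a unique $y\rightarrow w$), so availability is not the issue --- the issue is which exchange is available and what it does to the positions of the remaining labels, which is precisely the bookkeeping you postpone.

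More seriously, the elementary move does not naturally produce a shift by one. The paper's proof performs, for each $c=2t-1,2t-2,\dots,1$, a \emph{pair} of exchanges that changes the distinguished tail from $(c,\,c+1)$ to $(c-1,\,c)$ while replacing the interior occurrence of $c-1$ by $c+1$; only after the full sweep, when the tail has descended to $(0,1)$, has every entry been incremented by exactly $2$ modulo $2t+1$. Arbitrary shifts, in particular the shift by one, are then obtained only because $2t+1$ is odd, so that $2$ generates $\mathbb{Z}/(2t+1)$. Your induction on $a$ in steps of one presupposes that a shift by one is directly realizable by a single inward walk, which is not supported by anything in the proposal and is not what the natural walk yields. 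Without either exhibiting an explicit shift-by-one sequence of exchanges or routing the argument through the shift-by-two sweep together with the coprimality of $2$ and $2t+1$, the proof does not go through.
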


\begin{proof}
Performing the $(2t-1)--(2t)$ exchange operation on $\phi$ produces $\rho$
such that in $\rho^{\cdot}$ the element $2t$, which in $\phi^{\cdot}$ stood
right after $2t-1$, now stands right before $2t-2$. Next, performing the
$(2t)--(2t-2)$ exchange operation on $\rho$ produces $\varrho$ such that in
$\varrho^{\cdot}$ the element $2t-2$, which in $\rho^{\cdot}$ stood right
after $2t$, now stands right before $2t-1$. Thus, for the permutation
$\varrho$, produced by performing these two $x--y$ exchange operations, we
have
\[
\varrho^{\cdot}=(\varrho_{2t-2},\varrho_{2t-1},\dots,\varrho_{0},2t-2,2t-1).
\]
Here $\varrho_{j}=\phi_{j}$, when $\phi_{j}\neq2t-2$, and $\varrho_{j}%
=2t=\phi_{j}+2$, when $\phi_{j}=2t-2$.

Now, we have $(2t-2)\rightarrow(2t-1)$ in $\varrho^{\cdot}$. We proceed as
follows. Perform $(2t-2)--(2t-1)$ exchange operation on $\varrho$ to produce
$\xi$, such that in $\xi^{\cdot}$ the element $2t-1$, which in $\varrho
^{\cdot}$ stood right after $2t-2$, now stands right before $2t-3$. Next
perform $(2t-1)--(2t-3)$ exchange operation on $\xi$ to produce $\chi$, such
that in $\chi^{\cdot}$ the element $2t-3$, which in $\xi^{\cdot}$ stood right
after $2t-1$, now stands right before $2t-2$. Notice, that
\[
\chi^{\cdot}=(\chi_{2t-2},\chi_{2t-1},\dots,\chi_{0},2t-3,2t-2),
\]
where $\chi_{j}=\phi_{j}$, when $\phi_{j}\neq2t-3,2t-2$, and $\chi_{j}%
=\phi_{j}+2$, when $\phi_{j}=2t-3$ or $\phi_{j}=2t-2$.

We proceed this way, until we obtain a permutation $\pi$, such that
\[
\pi^{\cdot}=(\pi_{2t-2},\pi_{2t-1},\dots,\pi_{0},0,1)
\]
where $\pi_{j}=\phi_{j}+2\operatorname{mod}\left(  2t+1\right)  $. Since,
$2t+1$ is an odd number, for any $0\leq a\leq2t$, this whole process can now
be repeated several times to obtain permutation $\theta$, such that
\[
\theta^{\cdot}=(\theta_{2t-2},\theta_{2t-1},\dots,\theta_{0},a-1,a),
\]
where $\theta_{j}=\phi_{j}+a+1\operatorname{mod}\left(  2t+1\right)  $.
\end{proof}

\begin{lemma}
\label{tech.lem.3} Let $\phi$ be a permutation in $S_{2t}$, such that
$Gr(\phi)$ contains only one alternating cycle and that $\phi^{\cdot
}=(...,a,b,a-1,...)$, where $a$ and $b$ are any numbers between $0$ and $2t$
(again, $-1$ means $2t$). Then permutation $\phi$ is in the same $x--y$ orbit
as some permutation $\theta$ in $S_{2t}$, such that
\[
\theta^{\cdot}=(\theta_{2t-2},\theta_{2t-1},\dots,\theta_{0},2t-1,2t).
\]

\end{lemma}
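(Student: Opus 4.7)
The plan is to transform $\phi$ by $x$--$y$ operations into a permutation in ``shifted canonical'' form — one whose big black cycle contains a pattern $c+1 \to c-1 \to c$ for some $c$ — and then to invoke Lemma \ref{tech.lem.2}. Indeed, Lemma \ref{tech.lem.2} shows that every shifted canonical permutation lies in the same $x$--$y$ equivalence class as a permutation of the required form $(\theta_{2t-2},\dots,\theta_0,2t-1,2t)$, since that target is itself shifted canonical (the triple $0 \to 2t-1 \to 2t$ has the form $c+1 \to c-1 \to c$ with $c = 2t$), and Lemma \ref{tech.lem.2} connects all the shifts.

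Starting from $\phi^{\cdot}$ containing $a \to b \to a-1$, I would attempt the $(a-1)$--$b$ cyclic operation, which is legitimate by Definition \ref{cyclic-cdot} because the configuration $(x+1) \to y \to x = a \to b \to a-1$ is present; by Theorem \ref{cyclic}, this preserves the $x$--$y$ equivalence class. Using the symbolic-renaming description of the effect of cyclic operations (analogous to Lemma \ref{howcyclicworks}), this transformation brings the symbol $b-1$ to sit immediately before $b$ in the new cycle, so the transformed cycle has $\ldots \to b-1 \to b \to a-1 \to \ldots$ locally. In particular, the difference between the ``middle symbol'' and the ``target middle symbol $a-2$'' in the relevant pattern has been reduced.

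I would then iterate: in the transformed cycle, locate a new pattern of the form $a' \to b' \to a'-1$ (whose existence in nondegenerate situations follows from the global structure, using that $\phi^{\circ}$ is a single $(2t+1)$-cycle by Theorem \ref{Theorem 8}), and apply the corresponding cyclic operation. Progress is measured by the cyclic distance from $b$ to $a-2 \pmod{2t+1}$, which strictly decreases at each nontrivial step. When the cyclic operation degenerates to the identity (e.g.\ when $b = a+1$ or $b = a-1$, in which case the ranges specified in Definition \ref{cyclic-cdot} are empty), I would invoke instead an $x$--$y$ exchange operation (Definition \ref{def.exchange}), which is permitted by the local configuration of $\phi^{\cdot}$ and preserves $x$--$y$ equivalence by Theorem \ref{exchange}, in order to disrupt the stuck configuration and expose a new useful pattern.

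The main obstacle is to ensure the procedure cannot get stuck — that is, at every intermediate permutation which is not yet shifted canonical, either a nontrivial cyclic operation or a useful exchange operation decreases the potential. This reduces to a careful case analysis of the local structure around the triple $a, b, a-1$ and the immediate predecessor of $a$ and successor of $a-1$ in $\phi^{\cdot}$, handling separately the regimes $b > a$, $b < a-1$, and the degenerate boundary cases. Once this is established, the process terminates at a shifted canonical permutation, and Lemma \ref{tech.lem.2} delivers the desired $\theta$ with $\theta^{\cdot}=(\theta_{2t-2},\theta_{2t-1},\dots,\theta_0,2t-1,2t)$.
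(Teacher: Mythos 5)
Your opening move is exactly the paper's: apply the $(a-1)$--$b$ cyclic operation to the triple $a\rightarrow b\rightarrow a-1$, which produces a big black cycle reading locally $b-1\rightarrow b\rightarrow a-1$ (when $b>a$) or $a\rightarrow b\rightarrow b+1$ (when $b<a-1$). At that point you are already done, and the remainder of your plan both overshoots and breaks down. The shifting argument of Lemma \ref{tech.lem.2} does not require your ``shifted canonical'' triple $c+1\rightarrow c-1\rightarrow c$; all it consumes is an \emph{adjacent consecutive pair} $c\rightarrow c+1$ somewhere in the big black cycle, which it then transports through all residues modulo the odd number $2t+1$ until it becomes $2t-1\rightarrow 2t$. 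The single cyclic operation already creates such a pair ($b-1\rightarrow b$ in the first case, $b\rightarrow b+1$ in the second), so one application of that shifting argument finishes the proof. That two-line argument is the paper's entire proof; no iteration of cyclic operations is needed.

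The iteration you propose in its place has a genuine gap. After the first operation the triple $b-1\rightarrow b\rightarrow a-1$ is of the form $a'\rightarrow b'\rightarrow a'-1$ only when $b=a+1$, and a pattern of that form need not exist anywhere else in the cycle: for instance the big black cycle $(0,1,2,3,4)$ of $\langle 4\;\;3\;\;2\;\;1\rangle$, whose cycle graph has a single alternating cycle, contains no three consecutive entries $u\rightarrow v\rightarrow u-1$. So the claim that the next pattern's ``existence follows from the global structure'' is unfounded; moreover the relabelling performed by each cyclic operation changes the values of many symbols, so your potential (the cyclic distance from $b$ to $a-2$) is not even well defined from one step to the next, and you explicitly leave the ``cannot get stuck'' verification open. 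None of this needs repairing, though: drop the iteration and observe that the consecutive pair created by the very first cyclic operation is precisely the hypothesis under which the argument of Lemma \ref{tech.lem.2} delivers a permutation whose big black cycle ends in $2t-1,2t$.
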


\begin{proof}
Performing $(a-1)--b$ cyclic operation on $\phi$ produces permutation $\rho$,
such that if $b>a-1$, then
\[
\rho^{\cdot}=(...,b-1,b,a-1,...),
\]
and if $b<a-1$, then
\[
\rho^{\cdot}=(...,a,b,b+1,...).
\]
Repeating our arguments from the proof of Lemma \ref{tech.lem.2} establishes
the existence of the permutation $\theta$ in $S_{2t}$, such that
\[
\theta^{\cdot}=(\theta_{2t-2},\theta_{2t-1},\dots,\theta_{0},2t-1,2t).
\]

\end{proof}

\begin{lemma}
\label{tech.lem.4} Let $\phi$ be a permutation in $S_{2t}$, such that
$Gr(\phi)$ contains only one alternating cycle. By performing $x--y$ exchange
and $x--y$ cyclic operations on $\phi$, it is possible to obtain some
permutation $\theta$ in $S_{2t}$, such that
\[
\theta^{\cdot}=(\theta_{2t-2},\theta_{2t-1},\dots,\theta_{0},2t-1,2t).
\]

\end{lemma}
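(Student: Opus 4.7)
The plan is to reduce Lemma \ref{tech.lem.4} to Lemma \ref{tech.lem.3} by producing, via a sequence of $x$--$y$ exchange and cyclic operations, an intermediate permutation whose big black cycle contains a consecutive triple of the form $(a,b,a-1)$; once such a triple is in place, Lemma \ref{tech.lem.3} immediately delivers the canonical form, completing the argument.

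I would proceed by induction on $t$. The base case $t=1$ is trivial: the only permutation in $S_{2}$ whose cycle graph has a single alternating cycle is $\langle 2, 1\rangle$, and its big black cycle $(0,1,2)$ is already of the required shape. For the inductive step, I would first inspect $\phi^{\cdot}$ for an index $a$ satisfying $(\phi^{\cdot})^{2}(a)=a-1\pmod{2t+1}$. If one exists, Lemma \ref{tech.lem.3} finishes the job immediately, so the essential work concerns the case in which no such index exists.

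In the remaining case the strategy is to create such a triple by preparatory operations. Concretely, let $b=\phi^{\cdot}(a)$ and consider the $b$--$(a-1)$ exchange, which, when admissible, relocates $a-1$ into the slot directly following $b$ in $\phi^{\cdot}$. Admissibility is governed by the preconditions in Definition \ref{def.exchange}; when they fail, a small number of auxiliary exchanges --- patterned on the sequence of moves already exhibited in the proofs of Lemmas \ref{tech.lem.2} and \ref{tech.lem.3} --- clears the obstruction. Preservation of the single-alternating-cycle property throughout follows from Theorem \ref{equiv-alternating}, so we never leave the stratum in which Lemma \ref{tech.lem.3} applies.

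The main obstacle is carrying out this case analysis uniformly across every possible configuration of $\phi^{\cdot}$; indeed, there exist permutations (for instance, with $\phi^{\cdot}=(0,2,4,6,1,3,5)$ in $S_{6}$) in which no triple $(a,b,a-1)$ is initially present, yet a single well-chosen exchange creates one. A cleaner route around this bookkeeping is to invoke Lemma \ref{tech.lem.1} in combination with the induction: first bring the pair $2t-1, 2t$ into consecutive terminal positions of $\phi^{\cdot}$ using a few preparatory exchanges, then observe that stripping these two entries yields a permutation in $S_{2t-2}$ whose cycle graph still has exactly one alternating cycle. The inductive hypothesis places the smaller permutation in canonical form, and Lemma \ref{tech.lem.1} lifts the entire chain of operations back to $S_{2t}$, producing a $\theta$ with $\theta^{\cdot}=(\theta_{2t-2},\theta_{2t-3},\dots,\theta_{0},2t-1,2t)$, as required.
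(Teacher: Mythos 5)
There is a genuine gap, and it occurs at exactly the point where the real work of this lemma lies. Your stated target --- manufacture a consecutive triple $(a,b,a-1)$ in the big black cycle and then invoke Lemma \ref{tech.lem.3} --- is the same as the paper's, but you never supply the mechanism that produces the triple. Your proposed move, ``consider the $b$--$(a-1)$ exchange, which relocates $a-1$ into the slot directly following $b$,'' misreads Definition \ref{def.exchange}: an $x$--$y$ exchange is only admissible when $y$ is the element \emph{currently} standing immediately after $x+1$ in the big black cycle, and its effect is to move that $y$ to the position immediately \emph{before} $x$. So you cannot choose to transport $a-1$ next to an arbitrary $b$; you can only move whichever element happens to follow $b+1$. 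Deferring the rest to ``auxiliary exchanges patterned on Lemmas \ref{tech.lem.2} and \ref{tech.lem.3}'' is precisely the content you would need to exhibit. The paper does this concretely: it fixes $a$, writes $\phi^{\cdot}=(\dots,a,b_{1},\dots,b_{k},a-1,\dots)$, and shows by a three-case analysis on $b=\min\{b_{1},\dots,b_{k}\}$ that one or two admissible exchanges always strictly decrease $k$, until $k=1$ and Lemma \ref{tech.lem.3} applies. No induction on $t$ is needed.

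Your ``cleaner route'' is worse: it is circular. The conclusion of Lemma \ref{tech.lem.4} is exactly that $\phi$ can be transformed so that its big black cycle contains the consecutive pair $2t-1\rightarrow 2t$ (the rest of the displayed form merely names the remaining entries of the cycle). Your first step, ``bring the pair $2t-1,2t$ into consecutive terminal positions of $\phi^{\cdot}$ using a few preparatory exchanges,'' therefore assumes the lemma in order to prove it; the subsequent stripping, induction, and lifting via Lemma \ref{tech.lem.1} do no additional work toward this statement (they belong to the proof of Theorem \ref{hultman-exchange-cyclic}, where Lemma \ref{tech.lem.4} is consumed, not established). To repair the proof you must supply the gap-shrinking argument: show that whenever more than one element separates $a$ from $a-1$ in $\phi^{\cdot}$, some admissible $x$--$y$ exchange (chosen according to where the minimum of the intervening elements sits) reduces that separation, and iterate.
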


\begin{proof}
Select any number $a$ between $1$ and $2t$. Let $\phi^{\cdot}$ contain
\[
a\rightarrow b_{1}\rightarrow b_{2}\rightarrow\dots\rightarrow b_{k}%
\rightarrow(a-1),
\]
which means that there are $k$ elements between $a$ an $a-1$ in the big black
cycle $\phi^{\cdot}$. Clearly, $k>0$ (otherwise, in $Gr(\phi)$ the element
$a-1$ is taken by a grey arrow to the element $a$, which is then taken by a
black arrow back to $a-1$ $-$ contradiction to the fact that $Gr(\phi)$
contains only one alternating cycle). We want to show that if $k>2$, we can
decrease $k$ by performing $x--y$ exchange operations on $\phi$. Let
$b=\min\{b_{1},b_{2},\dots,b_{k}\}$. Consider three cases:

Case 1: $b=b_{i}$ for some $1\leq i\leq k-1$. Performing $b_{i}--b_{i+1}$
exchange operation on $\phi$ produces $\rho$, such that
\[
\rho^{\cdot}=(...,a,b_{1},b_{2},\dots,b_{i},b_{i+2},\dots,b_{k},a-1,...).
\]

Case 2: $b=b_{k}$ and $b_{k-1}-1=b_{i}\operatorname{mod}2t$ for some $1\leq
i\leq k-1$. Performing $b_{k-1}--b_{k}$ exchange operation on $\phi$ produces
$\rho$, such that
\[
\rho^{\cdot}=(...,a,b_{1},b_{2},\dots,b_{i-1},b_{k},b_{i},b_{i+1}%
,\dots,b_{k-1},a-1,...).
\]
Next, performing $b_{k}--b_{i}$ exchange operation on $\rho$ produces
$\varrho$, such that
\[
\varrho^{\cdot}=(...,a,b_{1},b_{2},\dots,b_{i-1},b_{k},b_{i+1},\dots
,b_{k-1},a-1,...).
\]

Case 3: $b=b_{k}$ and $b_{k-1}-1\neq b_{j}$ for any $1\leq j\leq k-1$.
Performing $b_{k-1}--b_{k}$ exchange operation on $\phi$ creates $\rho$, such
that
\[
\rho^{\cdot}=(...,a,b_{1},b_{2},\dots,b_{k-1},a-1,...).
\]

Thus in all three cases above we reduced the number of elements between $a$
and $a-1$ in $\phi\prime^{\cdot}$ by performing appropriate $x--y$ exchange
operations. Repeating this process again and again eventually produces a
permutation $\tau$, such that $\tau^{\cdot}=(...,a,d,a-1,...)$. Now, we apply
Lemma \ref{tech.lem.3} to $\tau$ and obtain the result of this lemma.
\end{proof}

\begin{theorem}
\label{hultman-exchange-cyclic} If two permutations $\phi$ and $\theta$ in
$S_{n}$ have the same number of alternating cycles in their cycle graphs
$Gr(\phi)$ and $Gr(\theta)$, then they are $x--y$ equivalent.
\end{theorem}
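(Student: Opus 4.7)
The plan is to establish the converse of Theorem \ref{equiv-alternating} by showing that within each Hultman class there is a canonical representative, and every member of the class can be $x-y$ reduced to it. I would proceed by strong induction on $n$, stratifying the argument by the number $k$ of alternating cycles and using the technical Lemmas \ref{tech.lem.1}--\ref{tech.lem.4} as the reduction machinery.

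First I would dispatch the base case $k=1$, which forces $n$ to be even, say $n=2t$. Given $\phi\in S_{2t}$ with a single alternating cycle in $Gr(\phi)$, Lemma \ref{tech.lem.4} produces an $x-y$ equivalent permutation whose big black cycle terminates with $\ldots,2t-1,2t$. Lemma \ref{tech.lem.2} then shows that any two such terminated big black cycles related by a cyclic shift of entries are in the same $x-y$ orbit. The remaining work in the $k=1$ case is to show that any two terminated cycles, regardless of their interiors, reduce to a common canonical permutation; I would do this by iteratively applying the local rewriting provided by Lemma \ref{tech.lem.3}, together with additional $x-y$ exchange operations, to normalize the interior entries one by one.

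For $k\ge 2$, the key observation is that $\phi^{\cdot}$ contains at least one maximal run of consecutive entries that lie entirely in a single alternating cycle of $Gr(\phi)$ not passing through $0$. Such a run can be isolated at the tail by $x-y$ exchange and cyclic operations. More precisely, I would argue that $\phi$ can be brought to the form
\[
\phi^{\cdot} = (\psi_{n-2m},\psi_{n-2m-1},\ldots,\psi_0,n-2m+1,\ldots,n)
\]
in which the trailing block $(n-2m+1,\ldots,n)$ constitutes one alternating cycle of length $2m$ in $Gr(\phi)$, while $\psi\in S_{n-2m}$ has cycle graph with exactly $k-1$ alternating cycles. Performing the analogous reduction on $\theta$ and invoking the inductive hypothesis on $n-2m$ gives $\psi \sim \psi'$ in $S_{n-2m}$, where $\psi'$ is the interior peeled off $\theta$. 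A generalization of Lemma \ref{tech.lem.1}, extending it from a one-alternating-cycle hypothesis to a ``fixed trailing block" hypothesis, then lifts this interior equivalence back to the full equivalence $\phi\sim\theta$ in $S_n$, because the $x-y$ operations used on $\psi$ act entirely inside the first $n-2m$ positions and never disturb the appended tail.

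The main obstacle is the isolation step for $k\ge 2$: orchestrating $x-y$ operations to gather all entries of one alternating cycle of $Gr(\phi)$ into a contiguous tail of $\phi^{\cdot}$ without entangling other alternating cycles. This requires analogues of Lemmas \ref{tech.lem.3} and \ref{tech.lem.4} adapted to non-trivial Hultman decompositions, plus a careful verification that the lifting mechanism of a generalized Lemma \ref{tech.lem.1} preserves both the number of alternating cycles and the trailing block throughout an arbitrary sequence of $x-y$ operations applied internally. Once these two ingredients are in place, the induction closes and the theorem follows.
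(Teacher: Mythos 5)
Your proposal reproduces the general architecture of the paper's argument (induction on $n$, reduction of each Hultman class to a canonical representative, the four technical lemmas as the engine), but both of the steps you yourself flag as ``obstacles'' are precisely where the content of the theorem lives, and neither is closed. In the base case $k=1$, after Lemmas \ref{tech.lem.4} and \ref{tech.lem.2} bring both permutations to a big black cycle ending in $2t-1,2t$, you propose to ``normalize the interior entries one by one'' using Lemma \ref{tech.lem.3} plus unspecified exchange operations. But Lemma \ref{tech.lem.3} only converts a local pattern $(\ldots,a,b,a-1,\ldots)$ into the canonical tail, and Lemma \ref{tech.lem.2} only shifts every entry by the same constant modulo $2t+1$; neither sorts an arbitrary interior, so this step is unsupported. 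The paper closes exactly this gap by a further induction on $t$: Lemma \ref{tech.lem.1} lifts an equivalence between one-alternating-cycle permutations of $S_{2t}$ to their extensions in $S_{2t+2}$ obtained by appending the tail $2t+1,2t+2$, and Lemma \ref{tech.lem.4} reduces every one-alternating-cycle permutation of $S_{2t+2}$ to that form. You never invoke Lemma \ref{tech.lem.1} in the base case, where it is indispensable.

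For $k\geq2$ your plan is to gather an entire alternating cycle into a contiguous trailing block of consecutive values and peel it off, descending from $S_{n}$ to $S_{n-2m}$. This normalization is much stronger than what is needed, it is left entirely to unstated ``analogues'' of Lemmas \ref{tech.lem.3} and \ref{tech.lem.4} and a ``generalized'' Lemma \ref{tech.lem.1}, and the bookkeeping is already off: an alternating cycle of $Gr(\phi)$ corresponds to a cycle of $\phi^{\circ}$ whose support has $m$ vertices, where $m$ is its number of black edges, and $m$ need not be even (it can be $1$), so neither the size $2m$ of the removed block nor the claim that a run of consecutive integers in $\phi^{\cdot}$ constitutes a single alternating cycle is correct as stated. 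The paper's route is substantially lighter: it only shrinks the single cycle of $\theta^{\circ}$ containing $n$ down to a fixed point, one element at a time, by an explicit exchange that transfers one element of that cycle into a neighboring cycle of $\theta^{\circ}$; once $n$ is fixed, deleting it lowers both $n$ and the number of alternating cycles by exactly one, and the induction on $n$ applies directly. Without proofs of your isolation step and of the generalized lifting lemma, the argument does not go through.
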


\begin{proof}
Recall, that it was already established in Theorem \ref{equiv-alternating}
that if $\theta$ and $\phi$ are $x--y$ equivalent then $Gr(\phi)$ and
$Gr(\theta)$ have the same number of alternating cycles. This theorem
establishes the other direction.

First, we prove our theorem for the case, when $n=2t$ and $\phi$ and $\theta$
have one alternating cycle in their cycle graphs $Gr(\phi)$ and $Gr(\theta)$.
If $t=1$ and $n=2t=2$, this statement is trivial, since there is only one
permutation in $S_{2}$, which has one alternating cycle in its cycle graph.

We proceed by induction on $n=2t$. Assume that any two permutations $\phi$ and
$\theta$ in $S_{2t}$, such that $Gr(\phi)$ and $Gr(\theta)$ contain one
alternating cycle, are $x--y$ equivalent. Let $\phi^{\cdot}=(\phi_{2t}%
,\phi_{2t-1},\dots,\phi_{0})$ and $\theta^{\cdot}=(\theta_{2t},\theta
_{2t-1},\dots,\theta_{0}).$

By Lemma \ref{tech.lem.1}, any two permutation $\mu$ and $\tau$ in $S_{2t+2}$,
such that%
\[
\mu^{\cdot}=(\phi_{i},\phi_{i-1},\dots,\phi_{0},\phi_{2t},\dots,\phi
_{i+1},2t+1,2t+2)
\]
and%
\[
\tau^{\cdot}=(\theta_{j},\theta_{j-1},\dots,\theta_{0},\theta_{2t}%
,\dots,\theta_{j+1},2t+1,2t+2),
\]
also have only one alternating cycle in their cycle graphs and are also $x--y$ equivalent.

Now, Lemma \ref{tech.lem.4} asserts that any permutation $\sigma$ in
$S_{2t+2}$, such that $Gr(\sigma)$ contains only one alternating cycle, is
$x--y$ equivalent to some permutation $\mu$ in $S_{2t+2}$, such that
$\mu^{\cdot}$ contains $(2t+1)\rightarrow(2t+2)$. From Theorem
\ref{equiv-alternating} we know, that $Gr(\mu)$ also contains only one
alternating cycle. Again, Lemma \ref{tech.lem.4} asserts that any permutation
$\pi$ in $S_{2t+2}$, such that $Gr(\pi)$ contains only one alternating cycle,
is $x--y$ equivalent to some permutation $\tau$ in $S_{2t+2}$, such that
$\tau^{\cdot}$ contains $(2t+1)\rightarrow(2t+2)$. Again, Theorem
\ref{equiv-alternating} asserts that $Gr(\tau)$ contains only one alternating
cycle. Since we have obtained above, using induction and Lemma
\ref{tech.lem.1}, that any such $\mu$ and $\tau$ in $S_{2t+2}$ are $x--y$
equivalent, we get that $\sigma$ and $\pi$ are also $x--y$ equivalent. This
completes the induction argument.

We prove our theorem by induction on $n$. The statement is obvious for $n=1$
and $n=2$. Now, assume that the theorem is true for all $n\leq k$. We prove it
for $n=k+1$.

We start by considering a permutation $\phi\in S_{k+1}$, which fixes the
element $k+1$. Thus, $\phi(k+1)=k+1$. This means, that $(k+1)$ is a cycle of
length one in the cyclic decomposition of $\phi$. This also means, that
$\phi^{\cdot}$ contains $0\rightarrow(k+1)$. Thus in $\phi^{\circ}$ the
element $k+1$ is also a fixed element, just as it is in $\phi$, and
$\phi^{\circ}$ also contains a cycle $(k+1)$ of length one. We shall now prove
that any permutation, which has the same number of alternating cycles in its
cycle graph as in $Gr(\phi)$, is $x--y$ equivalent to $\phi$. Let $\rho\in
S_{k}$ be $\phi$, with its fixed element $k+1$ deleted.

Notice, that performing on $\phi$ any $x--y$ exchange or $x--y$ cyclic
operation, where $x\neq k+1$ and $y\neq0,k+1$, leaves $0\rightarrow k+1$ in
the big black cycle $\phi^{\cdot}$ unbroken. Consequently, such operation does
not effect the cycle $(k+1)$ in $\phi^{\circ}$. Hence, performing any $x--y$
exchange and $x--y$ cyclic operation, with $y\neq0$, on $\rho$ is equivalent
to performing this same operation on $\phi$, and then deleting the fixed
element $k+1$ from the result.

Notice, that in order to be permissible to perform any $x--0$ cyclic
operation, $x$ must be less than $k+1$. Otherwise, $x+1=0\operatorname{mod}%
\left(  k+1\right)  $, hence $y=x+1$, which is impossible. Thus, any $x--0$
cyclic operation does not move $k+1$ from its place in $\phi^{\cdot}$. Hence,
$x--0$ cyclic operation preserves $k+1$ as a fixed element.

Notice also, that performing on $\phi$ first $x--0$ exchange operation, with
any $x\neq k+1$, and then $x--(k+1)$ exchange operation, will leave
$0\rightarrow k+1$ unbroken in the big black cycle $\phi^{\cdot}$.
Consequently, this composition of two exchange operations does not effect the
cycle $(k+1)$ in $\phi^{\circ}$. Hence, performing an $x--0$ exchange
operation on $\rho$ is equivalent to performing this same $x--0$ exchange
operation, followed by $x--(k+1)$ exchange operation, on $\phi$, and then
deleting the fixed element $k+1$ from the result.

First, consider $\theta\in S_{k+1}$, such that $Gr(\theta)$ has the same
number of alternating cycles as $Gr(\phi)$, and which, like $\phi$, fixes the
element $k+1$. Let $\varrho\in S_{k}$ be $\theta$, with its fixed element
$k+1$ deleted. By the induction hypothesis, we can obtain $\varrho$ from
$\rho$ by performing $x--y$ exchange and $x--y$ cyclic operations. But, due to
our above argument, this implies that we can also obtain $\theta$ from $\phi$
by performing $x--y$ exchange and $x--y$ cyclic operations.

Now we will show that any permutation $\theta\in S_{k+1}$, which has two or
more alternating cycles in its cycle graph $Gr(\theta)$, is $x--y$ equivalent
to some permutation $\phi\in S_{k+1}$, which fixes the element $k+1$. Let%
\[
cyc=(k+1)\mapsto\theta_{j_{2}}\mapsto...\mapsto\theta_{j_{m}}\mapsto(k+1)
\]
be the cycle of $\theta^{\circ}$, which contains the element $\theta_{j_{1}%
}=k+1$. If the length of $cyc$ is $1$, then just take $\phi=\theta$. If the
length of $cyc$ is $2$, then $cyc=(k+1)\mapsto\theta_{j_{2}}\mapsto(k+1)$.
Here $\theta_{j_{2}}\neq k$, since there can be no $i\mapsto(i+1)$, for any
$i$, in any cycle of $\theta^{\circ}$. Performing $\theta_{j_{2}}--(k+1)$
exchange operation on $\theta$ produces a permutation $\phi$, such that
$\phi^{\circ}$ contains $(k+1)\mapsto(k+1)$. Thus, $\theta$ is $x--y$
equivalent to $\phi$, which fixes the element $k+1$.

Now, by induction on the length of $cyc$, assume that our statement that
$\theta$ is $x--y$ equivalent to some $\phi\in S_{k+1}$, satisfying
$\phi(k+1)=k+1$, is true for all permutations $\theta$ with the length of
$cyc$ being $\leq m$.

Consider any $\theta$ with
\[
cyc=(k+1)\mapsto\theta_{j_{2}}\mapsto...\mapsto\theta_{j_{m+1}}\mapsto(k+1).
\]

If the length of the cycle $cyc$ is $\geq3$, then we select some
$b=\theta_{j_{r}}$ in $cyc$, satisfying $\theta_{j_{r+1}}\neq k+1$, such that
$b+1$ is not contained in $cyc$. If $cyc$ does not contain $0$, we take
$b=k+1$. If $cyc$ contains $0$, but does not contain $1$, we take $b=0$. And
so on. By our assumptions, $cyc$ cannot contain all the numbers between $0$
and $k$, otherwise $\theta^{\circ}$ will have only one cycle. Hence, we will
always find such $b$. Notice, that, since $cyc$ contains $k+1$, $b\neq k$.
Without loss of generality, assume that $b+1=\theta_{i_{1}}\in cyc_{2}$, where%
\[
cyc_{2}=(b+1)\mapsto\theta_{i_{2}}\mapsto...\mapsto\theta_{i_{a}}%
\mapsto(b+1).
\]

Let $s$ be such that $\theta_{s}=\theta_{j_{r+1}}+1\operatorname{mod}\left(
k+2\right)  $. Notice, that if $s>j_{r+1}$ then $s>j_{r+1}+1$. If
$\theta_{j_{r+1}}+1=\theta_{j_{r+1}+1}$ then in $\theta^{\circ}$ we would have
$\theta_{j_{r+1}}\mapsto\theta_{j_{r+1}}$, since $(0,1,\dots,n)$ takes
$\theta_{j_{r+1}}$ to $\theta_{j_{r+1}}+1\operatorname{mod}\left(  k+2\right)
$ and then $\theta^{\cdot}$ takes $\theta_{j_{r+1}+1}=\theta_{j_{r+1}%
}+1\operatorname{mod}\left(  k+2\right)  $ back to $\theta_{j_{r+1}}$. But
this contradicts our assumption on the length of $cyc$. We define permutation
$\tau\in S_{k+1}$ as follows:

\begin{itemize}
\item If $s>j_{r+1}$ then%
\[
\tau=\langle\theta_{1}\;\;...\;\;\theta_{j_{r+1}}\;\;\theta_{j_{r+1}%
+2}\;\;...\;\;\theta_{s-1}\;\;\theta_{j_{r+1}+1}\;\;(\theta_{j_{r+1}}%
+1=\theta_{s})\ \ ...\ \ \theta_{k+1}\rangle
\]

\item If $s<j_{r+1}$ then%
\[
\tau=\langle\theta_{1}\ \ ...\ \ \theta_{s-1}\ \ \theta_{j_{r+1}+1}%
\ \ (\theta_{j_{r+1}}+1=\theta_{s})\ \ ...\ \ \theta_{j_{r+1}}\ \ \theta
_{j_{r+1}+2}\ \ ...\ \ \theta_{k+1}\rangle
\]

\end{itemize}

Notice, that $\tau^{\circ}$ differs from $\theta^{\circ}$ only in the cycles
$cyc$ and in $cyc_{2}$. Namely, in $\tau^{\circ}$ these two cycles are
\[
cyc\prime=(k+1)\mapsto\theta_{j_{2}}\mapsto...\mapsto(\theta_{j_{r}}%
=b)\mapsto\theta_{j_{r+2}}\mapsto....\mapsto\theta_{j_{m+1}}\mapsto(k+1)
\]
and%
\[
cyc_{2}\prime=(b+1)\mapsto\theta_{i_{2}}\mapsto...\mapsto\theta_{i_{a}}%
\mapsto\theta_{j_{r+1}}\mapsto(b+1),
\]
respectively. Performing $\theta_{j_{r+1}}--(b+1)$ exchange operation on
$\tau$ produces $\theta$. But $\tau$ contains only $m$ elements in its cycle
$cyc\prime$, which contains the element $k+1$. Hence, by our induction
hypotheses on the length of $cyc$, $\tau$ is $x--y$ equivalent to some $\phi$,
which fixes the element $k+1$. Hence our $\theta$ is also $x--y$ equivalent to
this $\phi$. This completes the proof of the induction on the length of the
cycle $cyc$.

But we have already shown above, that any two permutations, which fix the
element $k+1$ and have the same number of alternating cycles in their cycle
graphs, are $x--y$ equivalent. Hence any two permutations, which have the same
number of alternating cycles in their cycle graphs, are $x--y$ equivalent. If
this number of alternating cycles is two or more, then these two permutations
are both $x--y$ equivalent to some permutation with the same number of
alternating cycles, which fixes $k+1$. And if this number of alternating
cycles in their cycle graphs is one, then we already established above, that
these two permutations are $x--y$ equivalent. Hence, our Theorem follows.
\end{proof}

At this point we are ready to state the main theorem of this work. This
theorem generalizes the observations, made in the end of the previous section,
that the probability of a permutation equality in a fixed finite group $G$
depends only on the number of the alternating cycles in the cycle graph the permutation.

\begin{theorem}
\label{main.theorem} (a) Let $\phi\in S_{n}$ be a permutation such that
$Gr(\phi)$ contains $k$ alternating cycles. Then
\[
Pr_{\phi}(G)=Pr^{n+1-k}(G)=Pr(a_{1}a_{2}\cdots a_{n-k}a_{n+1-k}=a_{n+1-k}%
a_{n-k}\cdots a_{2}a_{1})
\]
for $k\leq n$ (which implies that $k\leq n-1$, since, as it is shown in
\cite{DL}, $n-k$ is always an odd number), and $Pr_{\phi}(G)=1$ for $k=n+1$
(which implies that $\phi$ is the Identity permutation).

(b) Let $G$ be a non-abelian group, $\phi$ and $\theta$ two permutations in
$S_{n}$. Then, $Pr_{\phi}(G)=Pr_{\theta}(G)$ if and only if the number of
alternating cycles in the cycle graph $Gr\theta$ equals to that in the cycle
graph $Gr\phi$, which means the spectrum of the probabilities of permutation
equalities for permutations from $S_{n}$ in a general non-abelian group $G$
consists, in general, of exactly $[[\frac{n}{2}]]$ different values
corresponds to each different Hultman class in the cycle graph decomposition
of $S_{n}$.
\end{theorem}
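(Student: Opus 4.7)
The plan is to identify a canonical representative in each Hultman class and reduce the general statement to it via the $x$-$y$ equivalence machinery built up in the preceding theorems. For each admissible $k$ (namely, $1 \le k \le n-1$ with $n-k$ odd, plus the degenerate case $k = n+1$), define
$$\tilde\pi_k = \langle n+1-k,\ n-k,\ \ldots,\ 2,\ 1,\ n+2-k,\ n+3-k,\ \ldots,\ n\rangle \in S_n,$$
i.e.\ the reverse on the first $n+1-k$ entries followed by the identity on the last $k-1$ entries (and the identity permutation itself when $k = n+1$).

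Two facts about $\tilde\pi_k$ must be verified. First, $Pr_{\tilde\pi_k}(G) = Pr^{n+1-k}(G)$: the trailing variables $a_{n+2-k}, \ldots, a_n$ appear in the same positions on both sides of the defining equation and cancel by the same counting argument used in Section 3 to handle the cases $\pi_1 = 1$ and $\pi_4 = 4$, leaving exactly the reversed equation on the $n+1-k$ remaining variables. Second, $Gr(\tilde\pi_k)$ decomposes into exactly $k$ alternating cycles: by direct edge tracing, each of the $k-1$ trailing fixed points yields a length-$2$ alternating cycle between consecutive indices of $\{n+1-k, \ldots, n\} \cup \{0\}$ (the black edge $i \to i-1$ pairing with the grey edge $i-1 \to i$), and the reversed block on the remaining vertices contributes a single long alternating cycle that sweeps through all other edges (analogous to the $k=1$ computation for the full reverse of even length).

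Combining these two facts with the structural theorems completes part (a): by Theorem \ref{hultman-exchange-cyclic} any $\phi \in S_n$ whose cycle graph has $k$ alternating cycles is $x$-$y$ equivalent to $\tilde\pi_k$, and by Theorems \ref{exchange} and \ref{cyclic} the probability $Pr_\phi(G)$ is preserved under $x$-$y$ equivalence, whence $Pr_\phi(G) = Pr_{\tilde\pi_k}(G) = Pr^{n+1-k}(G)$; the case $k = n+1$ applies only to the identity, and $Pr_{\mathrm{Id}}(G) = 1$ trivially. For part (b), the forward direction is part (a); for the quantitative claim, the admissible values of $n+1-k$ are exactly the even integers $2, 4, 6, \ldots$ up to $n$ or $n-1$, giving at most $\lfloor n/2 \rfloor$ non-trivial probabilities $Pr^2(G), Pr^4(G), Pr^6(G), \ldots$ in addition to the trivial value $1$. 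The phrase \emph{in general} signals that distinctness of these non-trivial values is not asserted for every non-abelian $G$ but exhibited via the witnesses $G = D_4$ and $G = Q_8$, for which Example \ref{spec.example} gives an explicit binomial expansion of $Pr^{2m}(G)$ that is strictly monotone in $m$, producing exactly $\lfloor n/2 \rfloor$ distinct non-trivial values in $Spec_n(G)$.

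The main obstacle is the direct edge-tracing count that $Gr(\tilde\pi_k)$ decomposes into exactly $k$ alternating cycles; this is not conceptually deep, but requires careful bookkeeping of how the black and grey edges pair up in the trailing identity region versus in the reversed block. The distinctness claim in part (b) is the genuinely delicate point and, following the paper's phrasing, is secured only through the explicit witness groups rather than a uniform group-theoretic argument valid for all non-abelian $G$.
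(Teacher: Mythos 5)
Your proof of part (a) is essentially the paper's own argument: the same canonical representative $\langle n+1-k\;\;n-k\;\;\ldots\;\;1\;\;n+2-k\;\;\ldots\;\;n\rangle$, the same verification that its cycle graph has $k$ alternating cycles and that the trailing fixed variables cancel, and the same appeal to Theorem \ref{hultman-exchange-cyclic} together with Theorems \ref{exchange} and \ref{cyclic}. That part is correct and needs no further comment.

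Part (b) is where you diverge, and the divergence leaves a gap relative to the statement as written. The first sentence of (b) asserts, for an \emph{arbitrary} non-abelian $G$, that $Pr_{\phi}(G)=Pr_{\theta}(G)$ if and only if $Gr(\phi)$ and $Gr(\theta)$ have the same number of alternating cycles; the ``only if'' direction requires that $Pr^{2}(G), Pr^{4}(G), Pr^{6}(G),\ldots$ be pairwise distinct for \emph{every} non-abelian $G$, not merely for chosen witnesses. The paper secures this in one line by citing the strict monotonicity $Pr^{2n}(G)>Pr^{2n+2}(G)$ for every non-abelian $G$ and every $n\geq 0$, a result of Das and Nath (\cite{DasNath2}). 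Your argument instead exhibits distinctness only for $G=D_{4}$ and $G=Q_{8}$ via the explicit binomial formula of Example \ref{spec.example}, and you reinterpret the phrase ``in general'' as licensing this weaker conclusion. That reading is defensible for the final clause about the size of the spectrum, but it does not deliver the unconditional ``if and only if'' for a general non-abelian $G$ that the theorem (and the paper's proof) actually claims. To close the gap you should either invoke the Das--Nath inequality $Pr^{2n}(G)>Pr^{2n+2}(G)$ or prove an equivalent strict descent of the probabilities $Pr^{2m}(G)$ valid for all non-abelian $G$.
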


\begin{proof}
\textit{(a)} Define the permutation
\[
\theta=\langle
n+1-k\;\;n-k\;\;...\;\;2\;\;1\;\;n+2-k\;\;n+3-k\;\;...\;\;n\rangle
\]
in $S_{n}$. It is easy to check, that $\theta^{\circ}$ has $k$ cycles. Hence,
$Gr(\theta)$ contains $k$ alternating cycles. By Theorem
\ref{hultman-exchange-cyclic}, this implies that $\phi$ and $\theta$ are
$x--y$ equivalent. From Theorems \ref{exchange} and \ref{cyclic}, we get that
$Pr_{\phi}(G)=Pr_{\theta}(G)$. Finally, observe that, since $\theta$ fixes
elements $a_{n+2-k},...,a_{n}$ in their places, all these elements cancel-out
in the permutation equality of $\theta$. Thus,
\[
Pr_{\theta}(G)=Pr(a_{1}a_{2}\cdots a_{n-k}a_{n+1-k}=a_{n+1-k}a_{n-k}\cdots
a_{2}a_{1}).
\]

\textit{(b)} Since, $Pr^{2n}(G)>Pr^{2n+2}(G)$ for every non-abeliann $G$ and
every $n\geq0$ (see \cite{DasNath2}), we get the result of the theorem.
\end{proof}

\section{Some explicit formulae for $Pr_{\langle2n\;\;...\;\;1\rangle}(G)$}

We end this work by providing two formulae for $Pr_{\langle
2n\;\;...\;\;1\rangle}(G)$. Our first formula expresses $Pr_{\langle
2n\;\;...\;\;1\rangle}(G)$ in terms of $Stab.Prod_{n}(x_{1},x_{2},\dots
x_{n})$. Our second formula expresses $Pr_{\langle2n\;\;...\;\;1\rangle}(G)$
in terms of $c_{i_{1},...,i_{n};j}(G)$. These results constitute a
generalization of what was shown in Theorem \ref{mainfour} for permutations
from $S_{4}$.

Let $G$ be a finite group.%
\begin{equation}
Pr(a_{2n}a_{2n-1}\cdots a_{1}=a_{1}a_{2}\cdots a_{2n})=\frac{\sum
\limits_{x_{1},x_{2},\dots,x_{n}\in G}|Stab.Prod_{n}(x_{1},x_{2},\dots
,x_{n}))|}{|G|^{2n}} \label{Eq.Stab.Prod}%
\end{equation}%
\begin{equation}
Pr(a_{2n}a_{2n-1}\cdots a_{1}=a_{1}a_{2}\cdots a_{2n})=\sum\limits_{i_{1}%
,i_{2},...,i_{n},j=1}^{c(G)}\frac{\left\vert \Omega_{j}\right\vert \cdot
c_{i_{1},i_{2},...,i_{n};j}^{2}(G)}{\left\vert \Omega_{i_{1}}\right\vert
\cdot\left\vert \Omega_{i_{2}}\right\vert \cdot\cdots\cdot\left\vert
\Omega_{i_{n}}\right\vert \cdot\left\vert G\right\vert ^{n}}
\label{Eq.weak.ring}%
\end{equation}

\begin{proof}
Let us consider a generic equation
\[
a_{1}a_{2}\cdots a_{2n}=a_{2n}a_{2n-1}\cdots a_{1}.
\]
For $i=1,...,n$, denote the product $a_{2i-1}a_{2i}$ by $x_{i}$ and the
product $a_{2i}a_{2i-1}$ by $x_{i}^{\prime}$. Notice, that by Lemma
\ref{commute-conjug}, we have $x_{i}\sim x_{i}^{\prime}$.

The equation
\[
a_{1}a_{2}\cdots a_{2n}=a_{2n}a_{2n-1}\cdots a_{1}%
\]
becomes
\[
x_{1}x_{2}\cdots x_{n}=x_{n}^{\prime}x_{n-1}^{\prime}\cdots x_{1}^{\prime}.
\]

Now consider any
\[
x_{1},...,x_{n},x_{1}^{\prime},...,x\prime_{n}\in G,
\]
such that $x_{i}\sim x_{i}^{\prime}$ for all $i$. Then, by Lemma
\ref{conjug-decomp}, there are $\frac{|G|}{|\Omega_{(}x_{i})|}=|C_{G}(x_{i})|$
different ways of breaking each $x_{i}$ into a product $a_{2i-1}a_{2i}$ in
such a way, that $x_{i}^{\prime}=a_{2i}a_{2i-1}$. Thus to each fixed equation
\[
x_{1}x_{2}\cdots x_{n}=x_{n}^{\prime}x_{n-1}^{\prime}\cdots x_{1}^{\prime}%
\]
correspond
\[
\left\vert C_{G}(x_{1})\right\vert \cdot\left\vert C_{G}(x_{2})\right\vert
\cdot\cdots\cdot\left\vert C_{G}(x_{n})\right\vert
\]
different equations
\[
a_{1}a_{2}\cdots a_{2n}=a_{2n}a_{2n-1}\cdots a_{1}.
\]

Notice, that for any fixed $x_{1},x_{2},...,x_{n}\in G$, we can take%
\[
x_{1}^{\prime\prime}=(x_{2}x_{3}\cdots x_{n})^{-1}x_{1}(x_{2}x_{3}\cdots
x_{n})=x_{n}^{-1}\cdots x_{3}^{-1}x_{2}^{-1}x_{1}x_{2}x_{3}\cdots x_{n}%
\]
and obtain an equation
\[
x_{1}x_{2}\cdots x_{n}=x_{n}x_{n-1}\cdots x_{2}x_{1}^{\prime\prime}.
\]

Now, for any equation
\[
x_{1}x_{2}\cdots x_{n}=x_{n}^{\prime}x_{n-1}^{\prime}\cdots x_{1}^{\prime}%
\]
as above, there exist some
\[
g_{1},g_{2},...,g_{n}\in G
\]
such that $x_{1}^{\prime}=g_{1}x_{1}^{\prime\prime}g_{1}^{-1}$ and
$x_{i}^{\prime}=g_{i}x_{i}g_{i}^{-1}$, for $i=2,3,...,n$.

Thus if we select and fix $x_{1},x_{2},...,x_{n}$, we will have%
\[
\frac{|Stab.Prod_{n}(x_{n},...,x_{2},x_{1}^{\prime\prime})|}{\left\vert
C_{G}(x_{n})\right\vert \cdot\cdots\cdot\left\vert C_{G}(x_{2})\right\vert
\cdot|C_{G}(x_{1}^{\prime\prime})|}%
\]

different equations
\[
x_{1}x_{2}\cdots x_{n}=x_{n}^{\prime}x_{n-1}^{\prime}\cdots x_{1}^{\prime},
\]
in which $x_{i}^{\prime}\sim x_{i}$ and for all $i$. Any ordered $n$-tuple%
\[
(g_{n},g_{n-1},...,g_{2},g_{1}^{\prime\prime})\in Stab.Prod_{n}(x_{n}%
,...,x_{2},x_{1}^{\prime\prime})
\]
produces an equation
\[
x_{1}x_{2}\cdots x_{n}=x_{n}^{\prime}x_{n-1}^{\prime}\cdots x_{1}^{\prime}%
\]
by $x_{i}^{\prime}=g_{i}x_{i}g_{i}^{-1}$, for $i=n,...,2$, and $x\prime
_{1}=g_{1}x_{1}^{\prime\prime}g_{1}^{-1}$. And any two such equations are
equal if and only if $g_{i}\in C_{G}(x_{i})$ and $g_{1}\in C_{G}%
(x^{\prime\prime})$.

Notice, that $\left\vert C_{G}(x_{1}^{\prime\prime})\right\vert =|C_{G}%
(x_{1})|$. Thus, for each fixed ordered $n$-tuple $(x_{1},x_{2},...,x_{n})$ of
elements of $G$ we have%
\[
\frac{|Stab.Prod_{n}(x_{n},...,x_{2},x_{1}^{\prime\prime})|}{\left\vert
C_{G}(x_{n})\right\vert \cdot\cdots\cdot\left\vert C_{G}(x_{2})\right\vert
\cdot\left\vert C_{G}(x_{1})\right\vert }%
\]

different equations
\[
x_{1}x_{2}\cdots x_{n}=x_{n}^{\prime}x_{n-1}^{\prime}\cdots x_{1}^{\prime}.
\]
Moreover, to each one of these equations correspond
\[
\left\vert C_{G}(x_{1})\right\vert \cdot\left\vert C_{G}(x_{2})\right\vert
\cdot\cdots\cdot\left\vert C_{G}(x_{n})\right\vert
\]
different equations
\[
a_{1}a_{2}\cdots a_{2n}=a_{2n}a_{2n-1}\cdots a_{1},
\]
as shown above. Thus to each ordered $n$-tuple $(x_{1},x_{2},...,x_{n})$ of
elements of $G$ correspond\newline%
\[
|Stab.Prod_{n}(x_{n},...,x_{2},x_{1}^{\prime\prime})|
\]
different equations
\[
a_{1}a_{2}\cdots a_{2n}=a_{2n}a_{2n-1}\cdots a_{1}.
\]

Hence, to find
\[
Pr(a_{1}a_{2}\cdots a_{2n}=a_{2n}a_{2n-1}\cdots a_{1})
\]
we need to sum the value\newline%
\[
|Stab.Prod_{n}(x_{n},x_{n-1},\dots x_{2},x_{1}^{\prime\prime})|
\]
over all $x_{1},x_{2},...,x_{n}\in G$.

Since $x_{1},x_{2},...,x_{n}$ run through all the elements of $G$, and for
each fixed $x_{1},...x_{n-1}$ there is one-to-one correspondence between
$x_{1}^{\prime\prime}$ and $x_{1}$, this is the same as to sum the value
\[
|Stab.Prod_{n}(y_{1},y_{2},\dots,y_{n})|
\]
as over all $y_{1},y_{2},...,y_{n}$ run through all the elements of $G$.

Actually, we just renamed $x_{n-i}$ by $y_{i}$. Therefore,%
\[
Pr(a_{1}a_{2}\cdots a_{2n}=a_{2n}a_{2n-1}\cdots a_{1})=\frac{\sum
\limits_{y_{1},y_{2},\dots,y_{n}\in G}|Stab.Prod_{n}(y_{1},y_{2},\dots
,y_{n})|}{|G|^{2n}}.
\]

Now, select and fix an element $z$ in some equivalence class $\Omega_{j}$ of
$G$. For each $i$, there are $|C_{G}(x_{i})|$ different ways to break $x_{i}$
into a product $a_{2i-1}a_{2i}$ in such a way that $a_{2i}a_{2i-1}%
=x_{i}^{\prime}$. Hence, for each (fixed) equation
\[
x_{1}x_{2}\cdots x_{n}=z=x_{1}^{\prime}x_{2}^{\prime}\cdots x_{n}^{\prime}%
\]
we have exactly
\[
\left\vert C_{G}(x_{1})\right\vert \cdot\left\vert C_{G}(x_{2})\right\vert
\cdot\cdots\cdot\left\vert C_{G}(x_{n})\right\vert
\]
different equations%
\begin{gather*}
(a_{1}a_{2})(a_{3}a_{4})\cdots(a_{2n-1}a_{2n})=x_{1}x_{2}\cdots x_{n}=\\
z=x_{1}^{\prime}x_{2}^{\prime}\cdots x_{n}^{\prime}=(a_{2}a_{1})(a_{4}%
a_{3})\cdots(a_{2n}a_{2n-1}).
\end{gather*}

In other words,%
\[
\left\vert C_{G}(x_{1})\right\vert \cdot\left\vert C_{G}(x_{2})\right\vert
\cdot\cdots\cdot\left\vert C_{G}(x_{n})\right\vert =\frac{|G|^{n}}{\left\vert
\Omega(x_{1})\right\vert \cdot\left\vert \Omega(x_{2})\right\vert \cdot
\cdots\cdot\left\vert \Omega(x_{n})\right\vert }.
\]

Thus we obtain%
\begin{gather*}
Pr(a_{1}a_{2}a_{3}a_{4}\cdots a_{2n-1}a_{2n}=a_{2}a_{1}a_{4}a_{3}\cdots
a_{2n}a_{2n-1})=\\
\sum\limits_{i_{1},i_{2},...,i_{n},j=1}^{c(G)}\frac{\left\vert \Omega
_{j}\right\vert \cdot c_{i_{1},i_{2},...,i_{n};j}^{2}(G)}{\left\vert
\Omega_{i_{1}}\right\vert \cdot\left\vert \Omega_{i_{2}}\right\vert
\cdot\cdots\cdot\left\vert \Omega_{i_{n}}\right\vert \cdot\left\vert
G\right\vert ^{n}}.
\end{gather*}

There are $c_{i_{1},i_{2},...,i_{n};j}(G)$ different ways to break $z$ as
$x_{1}x_{2}\cdots x_{n}$ and $c_{i_{1},i_{2},...,i_{n};j}(G)$ different ways
to break $z$ as $x_{1}^{\prime}x_{2}^{\prime}\cdots x_{n}^{\prime}$. Thus, for
each $n$-tuple $\Omega_{i_{1}},\Omega_{i_{2}},...,\Omega_{i_{n}}$ of conjugacy
classes of $G$, there are $c_{i_{1},i_{2},...,i_{n};j}^{2}(G)$ different
equations
\[
x_{1}x_{2}\cdots x_{n}=z=x_{1}^{\prime}x_{2}^{\prime}\cdots x_{n}^{\prime}%
\]
with $x_{t},x_{t}^{\prime}\in\Omega_{i_{j}}$ for all $t$. Since both
\[
\phi=\langle2n\;\;2n-1\;\;...\;\;1\rangle
\]
and
\[
\theta=\langle2\;\;1\;\;4\;\;3\;\;...\;\;2n\;\;2n-1\rangle
\]
have exactly one alternating cycle in their cycles graphs $Gr(\phi)$ and
$Gr(\theta)$, respectively, from Theorem \ref{main.theorem} we deduce%
\begin{gather*}
Pr(a_{1}a_{2}\cdots a_{2n}=a_{2n}a_{2n-1}\cdots a_{1})=\\
Pr(a_{1}a_{2}a_{3}a_{4}\cdots a_{2n-1}a_{2n}=a_{2}a_{1}a_{4}a_{3}\cdots
a_{2n}a_{2n-1})=\\
\sum\limits_{i_{1},i_{2},...,i_{n},j=1}^{c(G)}\frac{\left\vert \Omega
_{j}\right\vert \cdot c_{i_{1},i_{2},...,i_{n};j}^{2}(G)}{\left\vert
\Omega_{i_{1}}\right\vert \cdot\left\vert \Omega_{i_{2}}\right\vert
\cdot\cdots\cdot\left\vert \Omega_{i_{n}}\right\vert \cdot\left\vert
G\right\vert ^{n}}.
\end{gather*}

\end{proof}

\section{Conclusions and Future Work}

Now, after giving an explicit formula for $Pr^{2n}(G)$, we recall some known
results connecting $Pr^{2n}(G)$ to the commutator subgroup $G^{\prime}$, the
quotient $G/Z(G)$, and isoclinism of groups.

\begin{theorem}
\cite{DasNath2}\label{Commutator} $lim_{n\rightarrow\infty}Pr^{2n}(G)=\frac
{1}{|G^{\prime}|}$.
\end{theorem}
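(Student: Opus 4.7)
The plan is to give an explicit closed form for $Pr^{2n}(G)$ in terms of the irreducible characters of $G$, from which the limit is immediate. First I would rewrite the permutational equality $a_1\cdots a_{2n}=a_{2n}\cdots a_1$ as the vanishing of the word value $\omega_n:=a_1 a_2\cdots a_{2n}\,a_1^{-1}a_2^{-1}\cdots a_{2n}^{-1}$, noting that since every variable has total exponent $0$ in $\omega_n$, its evaluation always lies in $G'$. Using the indicator expansion $\delta_1(g)=\tfrac{1}{|G|}\sum_\chi \chi(1)\chi(g)$ over the irreducible characters of $G$, this gives
\[
Pr^{2n}(G)=\frac{1}{|G|}\sum_\chi \chi(1)\,\mathbb{E}[\chi(\omega_n)].
\]

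Next I would prove by induction on $n$ that $\mathbb{E}[\chi(\omega_n)]=\chi(1)^{1-2n}$ for every irreducible character $\chi$, by peeling off $a_{2n}$ and then $a_{2n-1}$. Averaging $\chi(\omega_n)=\operatorname{tr}\bigl(\rho(a_1)\cdots\rho(a_{2n})\rho(a_1)^{-1}\cdots\rho(a_{2n})^{-1}\bigr)$ over $a_{2n}$ uses the Schur consequence $\mathbb{E}_a[\rho(a)M\rho(a)^{-1}]=\tfrac{\operatorname{tr}(M)}{\chi(1)}I$ to collapse it to $\tfrac{1}{\chi(1)}\chi(a_1\cdots a_{2n-1})\chi(a_1^{-1}\cdots a_{2n-1}^{-1})$. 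Averaging the result over $a_{2n-1}$ uses the matrix-coefficient orthogonality $\mathbb{E}_a[\rho(a)_{ji}\overline{\rho(a)_{kl}}]=\tfrac{\delta_{jk}\delta_{il}}{\chi(1)}$ (with $\rho$ chosen unitary so that $\rho(a^{-1})_{lk}=\overline{\rho(a)_{kl}}$) to reduce the product of two characters to $\tfrac{1}{\chi(1)}\chi(\omega_{n-1})$. This yields the recurrence $\mathbb{E}[\chi(\omega_n)]=\chi(1)^{-2}\,\mathbb{E}[\chi(\omega_{n-1})]$, with base case $\mathbb{E}[\chi(\omega_1)]=\mathbb{E}[\chi([a_1,a_2])]=1/\chi(1)$ (Frobenius's commutator formula, equivalent to $\langle\chi,\chi\rangle=1$).

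Substituting the closed form back in produces the identity
\[
Pr^{2n}(G)=\frac{1}{|G|}\sum_\chi \chi(1)^{2-2n}.
\]
The linear characters of $G$ are precisely the $[G:G']$ characters factoring through $G/G'$, each contributing $1$ to the sum and together $\tfrac{[G:G']}{|G|}=\tfrac{1}{|G'|}$. Every other irreducible character satisfies $\chi(1)\ge 2$, hence $\chi(1)^{2-2n}\to 0$ geometrically, and the non-linear terms vanish in the limit, giving $\lim_{n\to\infty}Pr^{2n}(G)=1/|G'|$.

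The main obstacle I anticipate is the second averaging step: after the first Schur reduction one is left with a product $\chi(A)\chi(B)$ of characters of two different elements rather than $\chi$ of a single element, and forcing this back into $\chi(\omega_{n-1})$ requires invoking the full matrix-coefficient bi-orthogonality of $\rho$ and handling the unitarity identification $\rho(a^{-1})=\rho(a)^{\ast}$ carefully. Once that is set up cleanly, the induction, the separation into linear and non-linear characters, and the limit are all routine.
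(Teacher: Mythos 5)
Your proof is correct. Note that the paper itself offers no proof of this statement: it is quoted from the Das--Nath survey \cite{DasNath2}, so there is nothing internal to compare against. Your character-theoretic derivation is sound at every step: the expansion $\delta_1(g)=\frac{1}{|G|}\sum_\chi\chi(1)\chi(g)$, the Schur averaging $\mathbb{E}_a[\rho(a)M\rho(a)^{-1}]=\frac{\operatorname{tr}(M)}{\chi(1)}I$ applied to $a_{2n}$, and the matrix-coefficient orthogonality applied to $a_{2n-1}$ together give the recurrence $\mathbb{E}[\chi(\omega_n)]=\chi(1)^{-2}\mathbb{E}[\chi(\omega_{n-1})]$, and with the Frobenius base case this yields the closed form $Pr^{2n}(G)=\frac{1}{|G|}\sum_\chi\chi(1)^{2-2n}$, which correctly specializes to $c(G)/|G|$ at $n=1$ and tends to $[G:G']/|G|=1/|G'|$ since exactly the linear characters survive. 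This is essentially the derivation in the cited source, and as a bonus it also proves the strict monotonicity $Pr^{2n}(G)>Pr^{2n+2}(G)$ for non-abelian $G$ that the paper invokes in Theorem \ref{main.theorem}(b).
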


\begin{theorem}
\cite{DasNath2}\label{Thm_Isoclinic} Let $G_{1}$ and $G_{2}$ two finite groups
such that $G_{1}$ and $G_{2}$ are isoclinic, then $Pr^{2n}(G_{1}%
)=Pr^{2n}(G_{2})$ for every $n\geq0$.
\end{theorem}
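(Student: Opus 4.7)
The plan is to reduce the claim to the well-known fact that a word lying in the commutator subgroup of the free group induces an evaluation map $G^k\to G'$ that factors through $(G/Z(G))^k$, and that isoclinism is precisely the data needed to make this factored map invariant across the pair $(G_1,G_2)$.

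First I would rewrite the defining equation of $Pr^{2n}(G)$ as $\omega_{2n}(a_1,\dots,a_{2n})=1$, where
\[
\omega_{2n}=a_1a_2\cdots a_{2n}\cdot a_1^{-1}a_2^{-1}\cdots a_{2n}^{-1}
\]
(using that $(a_{2n}\cdots a_1)^{-1}=a_1^{-1}\cdots a_{2n}^{-1}$). Each generator $a_i$ has exponent sum zero in $\omega_{2n}$, so $\omega_{2n}$ lies in the commutator subgroup $F'$ of the free group $F$ on $a_1,\dots,a_{2n}$. In particular the evaluation $\omega_{2n}\colon G^{2n}\to G$ takes values in $G'$, and replacing $a_i$ by $a_iz$ with $z\in Z(G)$ leaves the value unchanged, since central factors commute past everything and cancel against their inverses. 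So $\omega_{2n}$ descends to a well-defined map $\overline{\omega}_{2n}^{G}\colon(G/Z(G))^{2n}\to G'$.

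Next I would invoke the standard extension of Definition \ref{def.isoclinic}: although the isoclinism $(\alpha,\beta)$ is required to intertwine only the basic commutator $(a,b)\mapsto a^{-1}b^{-1}ab$, this intertwining property automatically extends to every word in $F'$. Concretely, the set of $w\in F$ for which
\[
\beta\bigl(\overline{w}^{G_1}(\bar x_1,\dots,\bar x_k)\bigr)=\overline{w}^{G_2}\bigl(\alpha(\bar x_1),\dots,\alpha(\bar x_k)\bigr)
\]
holds for all tuples of cosets is a subgroup of $F$ containing the basic commutators, hence containing $F'$. Applying this to $w=\omega_{2n}$ and using that $\beta$ is injective, we conclude that the number $N(G)$ of cosets-tuples $(\bar a_1,\dots,\bar a_{2n})\in(G/Z(G))^{2n}$ satisfying $\overline{\omega}_{2n}^{G}=1$ is the same for $G=G_1$ and $G=G_2$. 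Each fiber of the projection $G^{2n}\to(G/Z(G))^{2n}$ has size $|Z(G)|^{2n}$ and lies entirely inside $\omega_{2n}^{-1}(1)$ or entirely outside it, so
\[
Pr^{2n}(G)=\frac{|Z(G)|^{2n}\cdot N(G)}{|G|^{2n}}=\frac{N(G)}{[G:Z(G)]^{2n}}.
\]
Since $\alpha$ gives $[G_1:Z(G_1)]=[G_2:Z(G_2)]$ and $N(G_1)=N(G_2)$, we obtain $Pr^{2n}(G_1)=Pr^{2n}(G_2)$.

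The main obstacle is not computational but conceptual: one must carefully justify the propagation step, i.e.\ verify that $\{w\in F:\text{the intertwining identity holds}\}$ is closed under products and inversion (this uses that $\beta$ is a group homomorphism on $G_1'$ and that $\alpha$ respects the multiplication of $G/Z$) and that it contains every basic commutator (which is exactly the third bullet of Definition \ref{def.isoclinic}). Once this closure argument is laid out, the remainder is just the bijection-and-fiber counting sketched above.
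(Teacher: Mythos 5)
The paper states this theorem without proof, importing it verbatim from \cite{DasNath2}, so there is no in-paper argument to compare against; your proposal supplies a correct, self-contained proof along the standard lines. The skeleton is sound: $\omega_{2n}$ has zero exponent sum in each letter, hence lies in $F'$, evaluates into $G'$, and descends to a well-defined map on $(G/Z(G))^{2n}$; the intertwining property propagates from commutators to all of $F'$; and the fiber count over $G^{2n}\to(G/Z(G))^{2n}$ converts equality of the coset-level solution counts $N(G_1)=N(G_2)$ into equality of probabilities via $[G_1:Z(G_1)]=[G_2:Z(G_2)]$. The one step you should tighten is the generation claim: a subgroup of $F$ containing only the basic commutators $[a_i,a_j]$ of the free generators need not contain $F'$, since $F'$ is generated as a subgroup by the commutators $[u,v]$ of \emph{arbitrary} words $u,v\in F$ (it is only generated as a \emph{normal} subgroup by the basic ones). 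This is harmless here because the third bullet of Definition \ref{def.isoclinic} is stated for arbitrary elements of $G_1$, not just for a generating set: since $\alpha$ is a homomorphism, the coset of $u^{G_1}(x_1,\dots,x_k)$ is sent by $\alpha$ to the coset of $u^{G_2}(y_1,\dots,y_k)$, so applying the isoclinism axiom to the pair $(u^{G_1}(x),\,v^{G_1}(x))$ gives the intertwining identity for every $[u,v]$ with $u,v\in F$; combined with your closure under products and inverses this yields all of $F'$, and in particular $\omega_{2n}$. With that adjustment the proof is complete, and it in fact proves the stronger statement that $Pr_{\pi}(G_1)=Pr_{\pi}(G_2)$ for every permutation $\pi$, since every word $a_{1}\cdots a_{n}a_{\pi_1}^{-1}\cdots a_{\pi_n}^{-1}$ has zero exponent sums.
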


For example, every two abelian groups are isoclinic, and $Pr^{2n}(G)=1$ for
every abelian group. Moreover, the two non-abelian groups of order $8$, namely
the Dihedral group $D_{4}$ and the Quaternion group $Q_{8}$ are isoclinic, and
we mentioned $Pr^{2n}(D_{4})=Pr^{2n}(Q_{8})$ for every $n\geq0$.

Now, notice that the opposite direction of the second result is not true.
There exists two groups $G_{1}$ and $G_{2}$ where both of them have order $64$
such that $Pr^{2n}(G_{1})=Pr^{2n}(G_{2})$ for every $n\geq0$, but $G_{1}$ and
$G_{2}$ are not isoclinic.%

\begin{gather*}
G_{1}=\langle a_{1},a_{2},a_{3},a_{4},a_{5},a_{6}:a_{1}^{2}=a_{4},a_{2}%
^{2}=a_{3}^{2}=a_{5}^{2}=a_{6}^{2}=1,a_{4}^{2}=a_{6},\\
\lbrack a_{1},a_{2}]=a_{3},[a_{1},a_{3}]=[a_{2},a_{4}]=a_{5},[a_{1}%
,a_{4}]=[a_{1},a_{5}]=[a_{1},a_{6}]=\\
\lbrack a_{2},a_{3}]=[a_{2},a_{5}]=[a_{2},a_{6}]=[a_{3},a_{4}]=[a_{3}%
,a_{5}]=[a_{3},a_{6}]=\\
\lbrack a_{4},a_{5}]=[a_{4},a_{6}]=[a_{5},a_{6}]=1\rangle.
\end{gather*}

\begin{gather*}
G_{2}=\langle a_{1},a_{2},a_{3},a_{4},a_{5},a_{6}:a_{1}^{2}=a_{2}^{2}%
=a_{3}^{2}=a_{4}^{2}=a_{5}^{2}=a_{6}^{2}=1,\\
\lbrack a_{1},a_{2}]=a_{6},[a_{1},a_{3}]=[a_{2},a_{4}]=a_{5},[a_{1}%
,a_{4}]=[a_{1},a_{5}]=[a_{1},a_{6}]=\\
\lbrack a_{2},a_{3}]=[a_{2},a_{5}]=[a_{2},a_{6}]=[a_{3},a_{4}]=[a_{3}%
,a_{5}]=[a_{3},a_{6}]=\\
\lbrack a_{4},a_{5}]=[a_{4},a_{6}]=[a_{5},a_{6}]=1\rangle.
\end{gather*}

$G_{1}$ and $G_{2}$ are not isoclinic (even not weakly!) since $G_{1}%
/Z(G_{1})$ is non-abelian group of order $16$, and $G_{2}/Z(G_{2})$ is the
elementary abelian group of order $16$. Therefore, $G_{1}/Z(G_{1})$ can not be
isomorphic to $G_{2}/Z(G_{2})$, although $\left\vert G_{1}/Z(G_{1})\right\vert
=\left\vert G_{2}/Z(G_{2})\right\vert $.

By Theorem \ref{Commutator} we know that if $Pr^{2n}(G_{1})=Pr^{2n}(G_{2})$
for every $n\geq0$, then $\left\vert G_{1}^{\prime}\right\vert =\left\vert
G_{2}^{\prime}\right\vert $. It motivates the following.

\begin{conjecture}
\label{isoclinic} Let $G_{1}$ and $G_{2}$ are two finite groups such that
$Pr^{2n}(G_{1})=Pr^{2n}(G_{2})$ for every $n\geq0$, then $\left\vert
G_{1}/Z(G_{1})\right\vert =\left\vert G_{2}/Z(G_{2})\right\vert $.
\end{conjecture}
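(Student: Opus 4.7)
The plan is to attack the conjecture via character theory, building on the limiting result (Theorem \ref{Commutator}) which already yields $|G_1'|=|G_2'|$. The first step is to establish the explicit character formula
\[
Pr^{2n}(G)=\frac{1}{|G|}\sum_{\chi\in\mathrm{Irr}(G)}\frac{1}{\chi(1)^{2(n-1)}}.
\]
This would be obtained by recasting $a_1\cdots a_{2n}=a_{2n}\cdots a_1$ as the vanishing of the word $w_n=a_1\cdots a_{2n}a_1^{-1}\cdots a_{2n}^{-1}$, applying Fourier inversion on $G$ to count solutions as $\frac{1}{|G|}\sum_\chi \chi(1)\sum_{\vec a}\chi(w_n(\vec a))$, and reducing the inner sum two variables at a time. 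The key tool is the Frobenius-type relation $\sum_{x\in G}\chi(axbx^{-1})=\frac{|G|}{\chi(1)}\chi(a)\chi(b)$, applied first to $a_{2n}$ after writing $w_n=Ya_{2n}Xa_{2n}^{-1}$ with $Y=a_1\cdots a_{2n-1}$, $X=a_1^{-1}\cdots a_{2n-1}^{-1}$, and then to $a_{2n-1}$ via the companion identity $\sum_x\chi(ux)\chi(x^{-1}v)=\frac{|G|}{\chi(1)}\chi(uv)$, which leaves $\chi(w_{n-1})$ in the summand. Induction on $n$ yields the formula, and one checks it specializes correctly to $Pr^2(G)=c(G)/|G|$ (Theorem \ref{commute}) and to $\lim_{n\to\infty}Pr^{2n}(G)=1/|G'|$ (Theorem \ref{Commutator}).

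Second, writing $\alpha_d(G):=|\{\chi\in\mathrm{Irr}(G):\chi(1)=d\}|/|G|$, the formula reads $Pr^{2n}(G)=\sum_{d\ge 1}\alpha_d(G)\,d^{-2(n-1)}$. Since the sequences $n\mapsto d^{-2(n-1)}$ are linearly independent across distinct positive integers $d$, a Dirichlet-style extraction recovers each $\alpha_d(G)$ from $\{Pr^{2n}(G)\}_{n\ge 1}$: the limit gives $\alpha_1=1/|G'|$, the leading correction identifies the smallest degree $d_2>1$ and its normalized weight $\alpha_{d_2}$, and one continues inductively. Hence $Pr^{2n}(G_1)=Pr^{2n}(G_2)$ for all $n$ forces $\alpha_d(G_1)=\alpha_d(G_2)$ for every $d$; equivalently, the character-degree multisets of $G_1$ and $G_2$ are proportional, with ratio $|G_1|/|G_2|$. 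Note that $|G|$ itself is not yet pinned down by this data.

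Third, one must show that $|G/Z(G)|$ depends only on the normalized multiset $\{\alpha_d(G)\}_d$. The natural starting point is the identity $|G/Z(G)|=\sum_{\chi\,:\,Z(G)\subseteq\ker\chi}\chi(1)^2$ together with Ito's divisibility $\chi(1)\mid [G:Z(G)]$. The strategy is to recast the central-kernel indicator $[Z(G)\subseteq\ker\chi]$ using the central characters $\omega_\chi$ and a Plancherel-type averaging over conjugacy classes, aiming for a closed expression in the $\alpha_d$'s alone; as a fallback, one would verify the conjecture on families where the degree data is especially rigid (nilpotent groups, Camina groups, stem extensions, cf.\ Taketa's theorem), guided by the order-$64$ example of the paper in which $|G_1/Z(G_1)|=|G_2/Z(G_2)|=16$ despite non-isoclinism.

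The main obstacle is precisely this last step: a priori, two finite groups with proportional character-degree multisets could differ in $|Z(G)|/|G|$, so the conjecture asserts a genuine rigidity of the normalized degree profile. A complete proof will likely require either a new character-theoretic identity expressing $|Z(G)|/|G|$ as an explicit function of $\{\alpha_d\}$, or a structural argument via Schur multipliers, stem extensions, or isoclinism theory that goes beyond what the sequence $\{Pr^{2n}(G)\}$ reveals on its face.
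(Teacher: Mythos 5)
The statement you are trying to prove is stated in the paper as Conjecture \ref{isoclinic}: the authors do not prove it, they only motivate it (Theorem \ref{Commutator} gives $\left\vert G_{1}^{\prime}\right\vert =\left\vert G_{2}^{\prime}\right\vert$ under the same hypothesis) and exhibit a counterexample to the converse. So there is no proof in the paper to compare yours against, and, as you yourself concede in your final paragraph, your proposal is not a proof either. The first two steps are sound and constitute a genuine reduction: the character formula $Pr^{2n}(G)=\frac{1}{|G|}\sum_{\chi\in\mathrm{Irr}(G)}\chi(1)^{-2(n-1)}$ is correct (it is the Frobenius count for the genus-$n$ surface word, and is consistent with Theorem \ref{commute} at $n=1$ and with Theorem \ref{Commutator} in the limit), and the peeling argument does recover each $\alpha_{d}(G)=|\{\chi:\chi(1)=d\}|/|G|$ from the sequence $\{Pr^{2n}(G)\}_{n\geq1}$, since the smallest degree $d_{2}>1$ is identified by the decay rate of $Pr^{2n}(G)-\alpha_{1}$ and its weight by the corresponding rescaled limit. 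Hence the hypothesis of the conjecture is exactly equivalent to $\alpha_{d}(G_{1})=\alpha_{d}(G_{2})$ for all $d$, i.e.\ proportionality of the character-degree multisets with ratio $|G_{1}|/|G_{2}|$.

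The gap is your third step, which is the entire remaining content of the conjecture after this reformulation: you would need to show that $|G/Z(G)|$ is a function of the normalized degree profile $\{\alpha_{d}(G)\}_{d}$ alone, and you offer only a direction of attack (the identity $|G/Z(G)|=\sum_{Z(G)\subseteq\ker\chi}\chi(1)^{2}$, Ito's divisibility, central characters) without carrying it out. That identity sums over characters with \emph{central kernel}, a condition that is not visible from the degree multiset, so nothing in your outline actually closes the loop; the fallback of checking special families does not prove the general statement. To be clear, your reduction is correct and arguably sharpens the problem relative to how the paper leaves it, but the conjecture remains unproved by your argument, exactly at the point you flag as the main obstacle.
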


The groups $G_{2}$ and $G_{3}=D_{8}\times D_{8}$ give a counterexample to the
opposite direction of Conjecture \ref{isoclinic}.

\begin{claim}
\bigskip$G_{2}$ and $G_{3}=D_{8}\times D_{8}$ are two groups of order $64$
that are weakly isoclinic but not isoclinic.
\end{claim}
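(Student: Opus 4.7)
My plan is to verify both conditions of weak isoclinism directly from the presentations and then to rule out isoclinism by extracting an invariant of the commutator pairing $\bar c \colon G/Z(G)\times G/Z(G)\to G'$ that any isoclinism would preserve but that takes different values on $G_2$ and $G_3$.

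For $G_2$ I will first identify $Z(G_2)$ and $G_2'$. Every generator has order $2$ and the only non-trivial basic commutators are $[a_1,a_2]=a_6$ and $[a_1,a_3]=[a_2,a_4]=a_5$. Hence $a_5$ and $a_6$ commute with all generators, so they are central, and they manifestly generate $G_2'$; a short check with the bilinear commutator form on $\langle \bar a_1,\dots,\bar a_4\rangle$ shows no non-central word commutes with all of $a_1,\dots,a_4$. This gives $Z(G_2)=G_2'=\langle a_5,a_6\rangle\cong(\mathbb{Z}/2)^2$, $G_2/Z(G_2)\cong(\mathbb{Z}/2)^4$, and $|G_2|=64$. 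For $G_3=D_8\times D_8$ (with $D_8$ read as the dihedral group of order $8$, the only reading consistent with $|G_3|=64$), one has $Z(D_8)\cong D_8'\cong\mathbb{Z}/2$, hence $Z(G_3)\cong(\mathbb{Z}/2)^2$, $G_3'\cong(\mathbb{Z}/2)^2$ and $G_3/Z(G_3)\cong(\mathbb{Z}/2)^4$. Both $G/Z$ quotients are elementary abelian of order $16$ and both commutator subgroups are elementary abelian of order $4$, so picking any $\mathbb{F}_2$-linear isomorphisms for $\alpha$ and $\beta$ establishes weak isoclinism.

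To rule out isoclinism I will use the following invariant. Since $G_2$ and $G_3$ are both nilpotent of class $2$, the commutator descends to an alternating $\mathbb{F}_2$-bilinear map $\bar c\colon V\times V\to W$, where $V=G/Z(G)$ and $W=G'$. For each non-zero functional $\chi\in\mathrm{Hom}(W,\mathbb{F}_2)$, the composition $\chi\circ\bar c$ is an alternating bilinear form on $V$, and I claim that the multiset
\[
\mathcal{R}(G)=\{\operatorname{rank}(\chi\circ\bar c) : 0\neq\chi\in\mathrm{Hom}(W,\mathbb{F}_2)\}
\]
is an isoclinism invariant: the third Hall axiom gives $\beta\circ\bar c_1=\bar c_2\circ(\alpha\times\alpha)$, so pulling back $\chi$ by $\beta^{-1}$ sets up a bijection on non-zero functionals under which the resulting forms are identified via $\alpha$. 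Now I will compute: for $G_3$, two of the three non-zero functionals on $W=D_8'\oplus D_8'$ restrict $\bar c$ to a single $D_8$-factor and give rank $2$, while the diagonal functional produces a non-degenerate form of rank $4$, so $\mathcal{R}(G_3)=\{2,2,4\}$. For $G_2$, in the basis $\bar a_1,\dots,\bar a_4$, the $a_5$-coordinate of $\bar c$ pairs $\bar a_1$ with $\bar a_3$ and $\bar a_2$ with $\bar a_4$ and so has rank $4$; the $a_6$-coordinate only pairs $\bar a_1$ with $\bar a_2$ and has rank $2$; the sum has rank $4$ by a one-line row reduction over $\mathbb{F}_2$. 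Thus $\mathcal{R}(G_2)=\{2,4,4\}\neq\{2,2,4\}=\mathcal{R}(G_3)$, so $G_2$ and $G_3$ cannot be isoclinic.

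The main obstacle I foresee is not the computation but the verification that $\mathcal{R}$ is genuinely well-defined on isoclinism classes: one must check that $\bar c$ descends to $V\times V\to W$ (using class $2$) and that the Hall condition is equivalent to $\beta\circ\bar c_1=\bar c_2\circ(\alpha\times\alpha)$. Once this intertwining is in place, the rank computations above reduce to routine arithmetic with $4\times 4$ matrices over $\mathbb{F}_2$.
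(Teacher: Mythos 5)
Your proof is correct, and for the harder half of the claim it takes a genuinely different route from the paper. The weak-isoclinism part is essentially the same in both arguments: a direct computation giving $Z(G_2)=G_2'=\langle a_5,a_6\rangle$, hence $G_2/Z(G_2)\cong G_3/Z(G_3)\cong(\mathbb{Z}/2)^4$ and $G_2'\cong G_3'\cong(\mathbb{Z}/2)^2$ (and you rightly resolve the notational clash by reading $D_8$ as the dihedral group of order $8$, which the paper elsewhere calls $D_4$). For non-isoclinism the paper counts conjugacy classes, obtaining $Pr^{2}(G_2)=\frac{22}{64}$ versus $Pr^{2}(G_3)=\frac{25}{64}$, and invokes Theorem \ref{Thm_Isoclinic}, that isoclinic groups share all $Pr^{2n}$; this is the natural move in context, since the explicit probability gap is exactly the datum the authors need for their counterexample to the converse of Conjecture \ref{isoclinic}. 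You instead build the rank multiset $\mathcal{R}(G)$ of the forms $\chi\circ\bar c$ over nonzero $\chi\in\mathrm{Hom}(G',\mathbb{F}_2)$: your identification of Hall's third condition with the intertwining $\beta\circ\bar c_1=\bar c_2\circ(\alpha\times\alpha)$ is exactly right, the descent and bilinearity of $\bar c$ for class-$2$ groups are standard, and the computations $\mathcal{R}(G_2)=\{2,4,4\}$ versus $\mathcal{R}(G_3)=\{2,2,4\}$ check out. What your approach buys is self-containedness (no appeal to the quoted Das--Nath theorem) and a structural explanation of the failure of isoclinism at the level of the commutator pairing itself; what the paper's approach buys is brevity given the machinery already on the table, plus the probability values it wants to display anyway.
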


\begin{proof}
Direct calculations show that $G_{2}/Z(G_{2})$ is isomorphic to $G_{3}%
/Z(G_{3})$, $G_{2}^{\prime}$ is isomorphic to $G_{3}^{\prime}$.

On the other hand, $Pr^{2}(G_{2})=\frac{22}{64}$ and $Pr^{2}(G_{3})=\frac
{25}{64}$. Hence, by Theorem \ref{Thm_Isoclinic} $G_{2}$ is not isoclinic to
$G_{3}$.
\end{proof}


\begin{thebibliography}{99}                                                                                               %
\bibitem {BP}V. Bafna and P. A. Pevzner, \emph{Sorting by transpositions},
SIAM Journal of Discrete Mathematics \textbf{11} (1998) 224-240.

\bibitem {BBW}S. R. Blackburn, J. R. Britnell and M. Wildon, \emph{The
probability that a pair of elements of a finite group are conjugate}, Journal
of the London Mathemaical Society, \textbf{86} (2012) 755-778.

\bibitem {Buckley2014}S. M. Buckley, \emph{Isoclinism and weak isoclinism
invariants}, (2014), available online at http://archive.maths.nuim.ie/staff/sbuckley/Papers/gp\_isoc.pdf

\bibitem {CGL}Y. Cherniavsky, A. Goldstein and V. E. Levit, \emph{Groups of
balanced labelings on graphs}, Discrete Mathematics \textbf{320} (2014) 15-25.

\bibitem {CGK}C. Clifton, D. Guichard, P. Keef, \emph{How commutative are
direct products of dihedral groups}. Mathematics Magazine, \textbf{84} (2011) 137-140.

\bibitem {DasNath1}A. K. Das and R. K. Nath, \emph{A generalization of
commutativity degree of finite groups}, Commutative Algebra \textbf{40} (2012) 1974-1981.

\bibitem {DasNath2}A. K. Das and R. K. Nath, \emph{A survey on the estimation
of commutativity in finite groups}, Southeast Asian Bulletin of Mathematics
\textbf{37} (2013) 161-180.

\bibitem {Dixon}J. Dixon, \emph{Probabilistic group theory}, C.R. Math. Rep.
Acad. Sci. Canada \textbf{24} (2002) 1-15.

\bibitem {DL}J. P. Doignon and A. Labarre, \emph{On Hultman Numbers}, Journal
of Integer Sequences \textbf{10} (2007) Article 07.6.2.

\bibitem {ES}P. Erd\"{o}s and E. G. Straus, \emph{How abelian is a finite
group?}, Linear and Multilinear Algebra \textbf{3} (1976) 307-312.

\bibitem {ET}P. Erd\"{o}s and P. Turan, \emph{On some problems of statistical
group theory}, Acta Math. Acad. Sci. Hung. \textbf{19} (1968) 413-435.

\bibitem {ErSu}I. V. Erovenko and B. Surg, \emph{Commutativity degrees of
wreath products of finite abelian groups}, Bulletin of Australian Mathematical
Society \textbf{77} (2008) 31-36.

\bibitem {G}W. H. Guftafson, \emph{What is the probability that two group
elements are commute?}, American Mathematical Monthly \textbf{80} (1973) 1031-1034.

\bibitem {GerRob}R. M. Guralnick and G. R. Robinson, \emph{On the commuting
probability in finite groups}, Journal of Algebra \textbf{300} (2006) 509-528.

\bibitem {Hall1940}P. Hall, \emph{The classification of prime-power groups},
J. Reine Angew. Math. \textbf{182} (1940) 130-141.

\bibitem {JM}U. Jezernik and P. Moravec, \emph{Universal commutator relations,
Bogomolov multipliers, and commuting probability}, E-print available online at http://arxiv.org/pdf/1307.6533.pdf.

\bibitem {L}P. Lescot, \emph{Central extensions and commutativity degrees},
Communications in Algebra \textbf{29} (2001) 4451-4460.

\bibitem {L2}P. Lescot, H. N. Nguyen and Y. Yang, \emph{On the commuting
probability and supersolvadility of finite groups}, Monatsh Math. 174 (2014) 567--576.

\bibitem {NathDash1}R. K. Nath and A.K. Das, \emph{On generalized
commutativity degree of a finite group}, Rocky Mountain Journal of Mathematics
\textbf{41} (2011) 1987-2000.

\bibitem {PS}M. R. Pournakia nd R. Sobhani, \emph{Probability that the
commutator of two group elements is equal to a given element}, Journal of Pure
and Applied Algebra \textbf{212} (2008) 727-734.
\end{thebibliography}
\end{document}